\newcommand\cL{{\mathcal L}}
\newcommand\cS{{\mathcal S}}
\newcommand\cT{{\mathcal T}}
\newcommand\jp{j^\prime}
\newcommand\ip{i^\prime}
\renewcommand\QED{\hfill\ \hbox{\hskip6pt\vrule height 8pt width 6pt}}
\begin{document}

\begin{frontmatter}

\title{Principal components analysis of regularly varying functions}
\author{Piotr Kokoszka\\
{\small Colorado State University}
 \and
Stilian Stoev\\
{\small University of Michigan}
\and
Qian Xiong\\
{\small Colorado State University}
}

\date{}
\maketitle

\begin{abstract}
\noindent The paper is concerned with asymptotic properties of the
principal components analysis of functional data. The currently
available results assume the existence of the fourth moment.
We develop analogous results in a setting which does not
require this assumption. Instead, we assume that the observed
functions are regularly varying. We derive the asymptotic distribution
of the sample covariance operator and of the sample functional principal
components. We obtain a number of results on the convergence of
moments and almost sure convergence. We apply the new theory
to establish the consistency of the regression operator in a functional
linear model.

\medskip

\noindent
Key words: Functional data, Principal components, Regular variation

\end{abstract}

\end{frontmatter}


\section{Introduction}\label{s:i}
A fundamental technique of functional data analysis is to replace
infinite dimensional curves by coefficients of their projections onto
suitable, fixed or data--driven, systems, e.g. \citetext{bosq:2000},
\citetext{ramsay:silverman:2005}, \citetext{HKbook},
\citetext{hsing:eubank:2015}. A finite number of these coefficients
encode the shape of the curves and are amenable to various statistical
procedures. The best systems are those that lead to low dimensional
representations,  and so provide the most efficient dimension reduction.
Of these, the functional principal components (FPCs) have been most
extensively used, with hundreds of papers dedicated to the various
aspects of their theory and applications.

If $X, X_1, X_2, \ldots, X_N$ are mean zero
iid functions in $L^2$ with $E\lnorm X \rnorm^2 < \infty$,
then
\begin{equation} \label{e:KL}
X_n(t) = \sum_{j=1}^\infty \xi_{nj} v_j(t), \ \ \ E\xi_{nj}^2 = \la_j.
\end{equation}
The FPCs $v_j$ and the eigenvalues $\la_j$ are,  respectively,
the eigenfunctions and the eigenvalues of the covariance operator
$C: L^2 \to L^2$ defined by $C(x)(t) = \int \cov(X(t), X(s)) x(s) ds$
As such, the $v_j$   are orthogonal. We assume they are
normalized to unit norm. The $v_j$ form  an optimal orthonormal
basis for dimension reduction measured by the $L^2$ norm, see e.g.
Theorem 11.4.1 in \citetext{KRbook}.

The $v_j$ and the $\la_j$ are estimated by $\hat v_j$ and
$\hat\la_j$ defined by
\begin{equation} \label{e:emp}
\int \hat c(t,s) \hat v_j(s) ds = \hat \la_j \hat v_j(t),
\end{equation}
where
\begin{equation} \label{e:hat-c}
\hat c(t,s) = \frac{1}{N} \sum_{n=1}^N X_n(t) X_n (s).
\end{equation}
Like the $v_j$, the $\hat v_j$ are defined only up to a sign. 
Thus, strictly speaking, in the formulas that follow, the $\hat v_j$  
would need to be 
replaced with $\hat c_j \hat v_j$, where
$\hat c_j = {\rm sign}\lip \hat v_j , v_j\rip$. As is customary, 
to lighten the notation, we assume that the orientations of 
 $v_j$ and $\hat v_j$ match, i.e. $\hat c_j =1$. 

Under the existence of the fourth moment,
\begin{equation} \label{e:4th}
E\lnorm X \rnorm^4 = \lbr \int X^2(t) dt \rbr^2 < \infty,
\end{equation}
and assuming $\la_1 > \la_2>  \ldots$,
it has been shown that  for each $j \ge 1$,
\begin{equation} \label{e:rate}
\limsup_{N\to \infty} N E\lnorm  \hat v_j - v_j \rnorm^2 < \infty, \ \ \
\limsup_{N\to \infty} N E\lp \hat\la_j - \la_j \rp^2 < \infty,
\end{equation}
\begin{equation} \label{e:clt-laj}
N^{1/2}(\hat\la_j - \la_j) \convd N(0, \sg_j^2),
\end{equation}
\begin{equation} \label{e:clt-vj}
N^{1/2}(\hat v_j - v_j) \convd N(0, C_j),
\end{equation}
for a suitably defined variance $\sg_j^2$ and a
covariance operator $C_j$. The above relations, especially \refeq{rate},
have been used to derive large sample justifications of inferential
procedures based on the estimated FPCs $\hat v_j$. In most scenarios,
one can show that replacing the $\hat v_j$ by the $v_j$ and
the $\hat\la_j$ by the $\la_j$ is asymptotically
negligible. Relations \refeq{rate} were established by
\citetext{dauxois:1982} and extended to weakly dependent
functional time series by \citetext{hormann:kokoszka:2010}.
Relations \refeq{clt-laj} and \refeq{clt-vj} follow from
the results of \citetext{kokoszka:reimherr:2013}. In case
of continuous functions satisfying regularity conditions, they
follow from the results of \citetext{hall:h-n:2006}. 

A crucial assumption for the relations \refeq{rate}--\refeq{clt-vj} to
hold is the existence of the fourth moment, i.e. \refeq{4th}, the iid
assumption can be relaxed in many ways. Nothing is at present known
about the asymptotic properties of the FPCs and their eigenvalues if
\refeq{4th} does not hold.  Our objective is to explore what can be
said about the asymptotic behavior of $\widehat C$, $\hat v_j$ and
$\hat\la_j$ if
\refeq{4th} fails. We would thus like to consider the case of
$E\| X_n \|^2 < \infty$ and $E\| X_n \|^4 = \infty$.  Such an assumption
is however too general. From mid 1980s to mid 1990s similar
questions were posed for scalar time series for which the fourth
or even second moment does not exist. A number of results
pertaining to the convergence of sample covariances and
the periodogram  have been derived under the assumption
of regularly varying tails, e.g.
Davis and Resnick
(\citeyear{davis:resnick:1985}, \citeyear{davis:resnick:1986}),
\citetext{kluppelberg:mikosch:1994}, \citetext{mikosch:gka:1995},
\citetext{kokoszka:taqqu:1996}, \citetext{anderson:meerschaert:1997};
many others are summarized in the monograph of
\citetext{embrechts:kluppelberg:mikosch:1997}. The assumption of
regular variation is natural because non--normal stable limits
can be derived   by establishing
a connection to random variables in a stable domain of attraction, which
is characterized by regular variation. This is the approach we take.
We assume that the functions $X_n$ are regularly varying in the
space $L^2$ with the index $\ag\in (2, 4)$, which implies
$E\| X_n \|^2 < \infty$ and $E\| X_n \|^4 = \infty$. Suitable
definitions and assumptions are presented in Section~\ref{s:prelim}.

The paper is organized as follows.  The remainder of the introduction
provides a practical motivation for the theory we develop.
It is not necessary to understand the contribution of the
paper, but, we think, it gives  a good feel for what is being studied. The
formal exposition begins in Section~\ref{s:prelim}, in which notation
and assumptions are specified.  Section~\ref{s:hatC} is dedicated to
the convergence of the sample covariance operator (the integral
operator with kernel \refeq{hat-c}).  These results are then used in
Section~\ref{s:KR} to derive various convergence results for the
sample FPCs and their eigenvalues. Section~\ref{s:app} shows
how the results derived in previous sections can be used in a context
of a  functional regression model. Its objective is to illustrate the
applicability of our theory in a well--known and extensively studied
setting. It is hoped that it will motivate and guide applications to other
problems of functional data analysis. All proofs which go beyond simple
arguments are presented in Online material.

\begin{figure}
\begin{centering}
\includegraphics[width=0.95\textwidth]{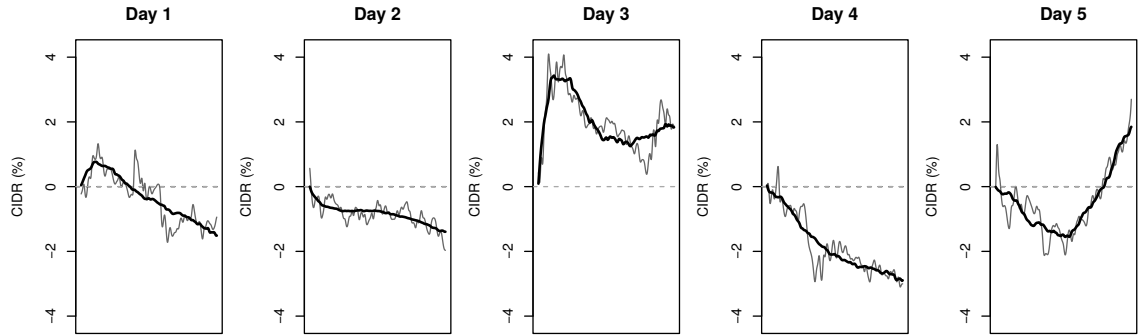}
\par\end{centering}
\caption{Five consecutive intraday return curves,  Walmart stock.
The raw returns are noisy grey lines. The smoother black
lines are approximations
$\widehat X_i(t) = \sum_{j=1}^3 \hat\xi_{ij} \hat v_j$.
\label{f:5day2}}
\end{figure}

We conclude this introduction by presenting a specific data context.
Denote by $P_i(t)$ the price of an asset at time $t$ of trading day
$i$. For the assets we consider in our illustration, $t$ is time in minutes
between 9:30  and and 16:00 EST (NYSE opening times) rescaled
to the unit interval $(0,1)$. The intraday return curve on day $i$ is
defined by
$X_i(t) = \log P_i(t) - \log P_i(0)$. In practice, $P_i(0)$ is the
price after the first minute of trading. The curves $X_i$ show how
the return accumulates over the trading day, see e.g.
\citetext{lucca:moench:2015}; examples of
 are shown in Figure~\ref{f:5day2}.

\begin{figure}
\begin{centering}
\includegraphics[width=0.75\textwidth]{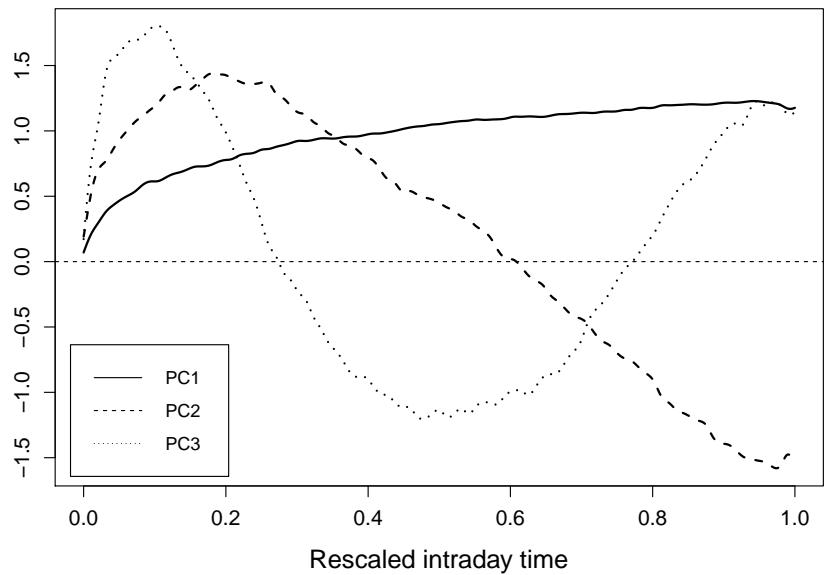}
\par\end{centering}
\caption{The first three sample FPCs of intraday returns on
Walmart stock. \label{f:fpc}}
\end{figure}

The first three sample FPCs, $\hat v_1, \hat v_2, \hat v_3$, are shown
in Figure~\ref{f:fpc}. They  are computed, using
\refeq{emp},  from minute-by-minute
Walmart returns form July 05, 2006 to Dec 30, 2011, $N=1,378$ trading
days. (This time interval is used for the  other assets we consider.)
The curves $\widehat X_i = \sum_{j=1}^3 \hat\xi_{ij} \hat v_j$,
with the scores $ \hat\xi_{ij} = \int X_i(t) \hat v_j(t)dt$,  visually
approximate the curves $X_i$ well.
One can thus expect that
the $\hat v_j$ (with properly adjusted sign) are good estimators  of
the population FPCs $v_j$  in \refeq{KL}. Relations \refeq{rate} and
\refeq{clt-vj} show that this is indeed the case,  {\em if}
$E \| X\|^4 < \infty$.
(The curves $X_i$ can be assumed to form a stationary time series in $L^2$,
see \citetext{horvath:kokoszka:rice:2014}.)
We will now argue that the assumption of the finite fourth moment
is not realistic, so, with the currently available theory, it is not clear
if the $\hat v_j$ are good estimators of the $v_j$.  If $E \| X\|^4 < \infty$,
then $E \xi_{1j}^4 < \infty$ for every $j$. Figure \ref{f:wmt-ibm}
shows the Hill plots of the
sample score $\hat\xi_{ij}$ for two stocks and for $j=1, 2, 3$.
Hill plots for other blue chip stocks look similar. 
These plots illustrate several properties. 1) It is reasonable
to assume that the scores have Pareto tails.
2) The tail index $\ag$ is smaller than 4, implying that the fourth
moment does not exist. 3)
It is reasonable to assume that the tail index does not depend on $j$
and is between 2 and 4. 
With such a motivation, we are now able to formalize in the next section
the setting of this paper.

\begin{figure}
\begin{centering}
\includegraphics[width=1.05\textwidth]{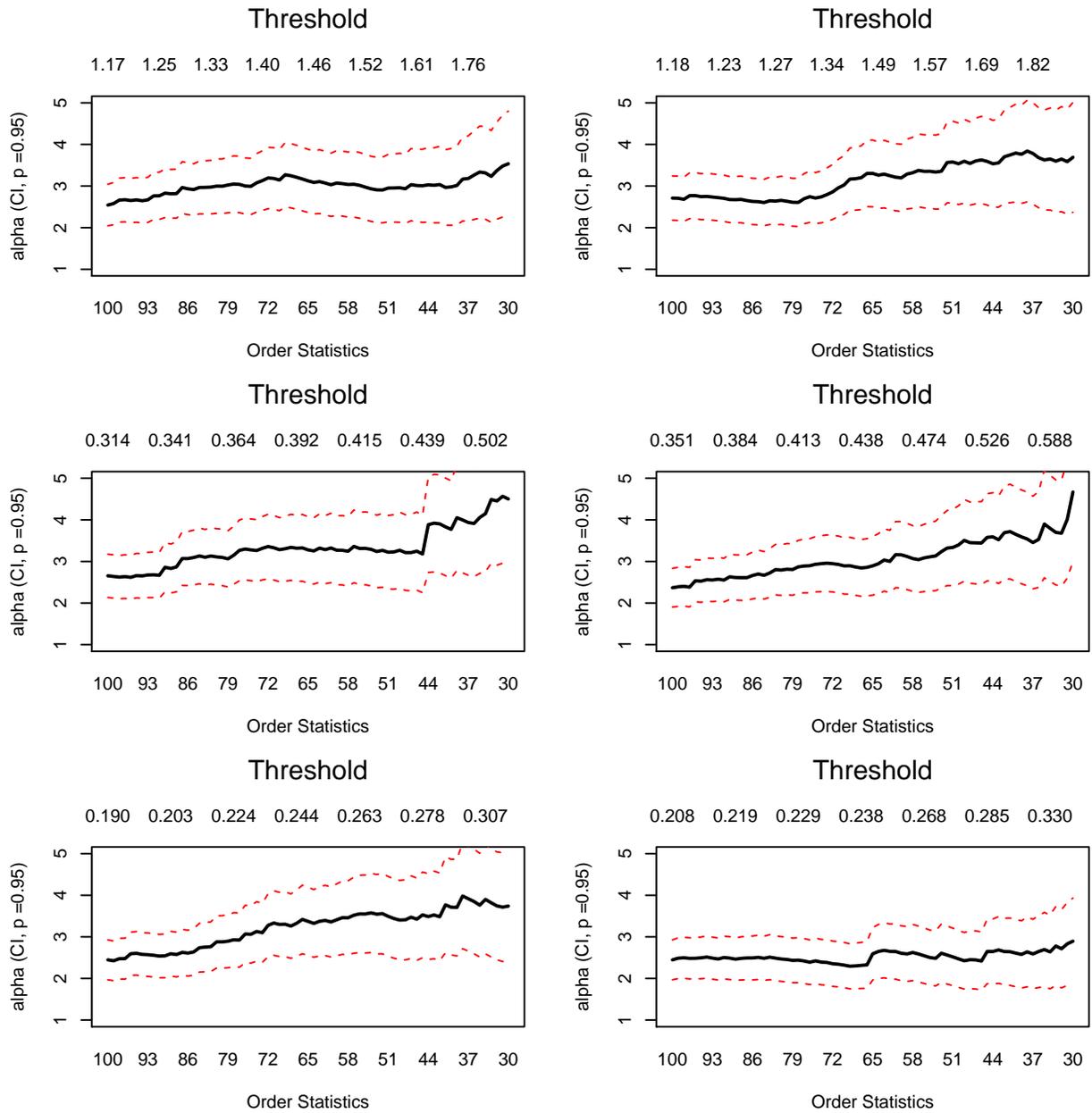}
\par\end{centering}
\caption{Hill plots (an estimate of $\ag$ as a function
of upper order statistics)   for sample FPC scores for {\bf Walmart} (left)
and {\bf IBM} (right). From top to bottom:  levels $j= 1, 2, 3$.
\label{f:wmt-ibm}}
\end{figure}


\section{Preliminaries} \label{s:prelim} The functions $X_n$ are
assumed to be independent and identically distributed in $L^2$, with
the same distribution as $X$, which is regularly varying with index
$\ag\in (2,4)$. By $L^2 := L^2(\cT)$, we denote the usual
separable Hilbert space of square integrable functions on some compact
subset $\cT$ of an Euclidean space. In a typical FDA framework, $\cT =
[0,1]$, e.g. Chapter 2 of \citetext{HKbook}.  Regular variation in
finite--dimensional  spaces has been a topic of extensive research for decades,
see e.g.\ Resnick (\citeyear{resnick:1987,resnick:2006}) and
\citetext{meerschaert:scheffler:2001book}. We shall need the concept of
regular variation of measures on {\em infinitely-dimensional} function
spaces. To this end, we start by recalling some terminology and
fundamental facts about regularly varying functions.

A measurable function $L:(0,\infty) \to \mathbb R$ is
said to be slowly varying (at infinity) if, for all $\lambda>0$,
\[
\frac{L(\lambda u)}{L(u)} \to 1,\ \quad \mbox{ as }u\to\infty.
\]
Functions of the form $R(u) = u^\rho L(u)$
are said to be regularly varying with
exponent $\rho\in{\mathbb R}$.

The notion of regular variation extends to measures and provides an
elegant and powerful framework for establishing limit theorems. It was
first introduced by \citetext{meerschaert:1984} and has been since
extended to Banach and even metric spaces using the notion of $M_0$
convergence (see e.g.\ \citetext{hult:lindskog:2006}).  Even though we
will work only with Hilbert spaces, we review the theory in a more
general context.

Consider a separable Banach space $\mathbb{B}$ and let $B_\epsilon:=
\{z\in\mathbb{B}\, :\, \|z\|<\epsilon\}$ be the open ball of radius
$\epsilon>0$, centered at the origin. A Borel measure $\mu$ defined on
$\mathbb{B}_0:=\mathbb{B} \backslash \{ \mathbf{0}\}$ is said to be
{\em boundedly finite} if $\mu(A)<\infty$, for all Borel sets that are
bounded away from $\mathbf{0}$, that is, such that $A\cap B_\epsilon =
\emptyset$, for some $\epsilon>0$.  Let $\mathbb{M}_0$ be the
collection of all such measures.  For $\mu_n,\mu\in\mathbb{M}_0$, we
say that the $\mu_n$ converge to $\mu$ in the $M_0$ topology, if
$\mu_n(A) \to \mu(A)$, for all bounded away from $\mathbf{0}$,
$\mu$-continuity Borel sets $A$, i.e., such that $\mu(\partial A) =0$,
where $\partial A:= \overline {A} \setminus A^{\circ}$ denotes the
boundary of $A$. The $M_0$ convergence can be metrized such that
$\mathbb{M}_0$ becomes a complete separable metric space (Theorem 2.3
in \citetext{hult:lindskog:2006} and also Section 2.2. of
\citetext{meinguet:2010}). The following result is known, see e.g.
Chapter 2 of \citetext{meinguet:2010} and references therein.

\begin{proposition}\label{p:RV}
Let $X$ be a random element in a separable
Banach space $\mathbb{B}$ and $\ag >0$.
The following three statements are equivalent:
\begin{itemize}
\item[(i)] For some slowly varying function $L$,
\begin{equation} \label{e:Xtail}
P(\lnorm X \rnorm > u ) = u^{-\ag} L(u)
\end{equation}
and
\begin{equation} \label{e:X-RV-mu}
\frac{P(u^{-1}X \in  \cdot )}{P(\lnorm X \rnorm>u)}
\stackrel{M_0}{\longrightarrow} \mu(\cdot), \ \ u \to \infty,
\end{equation}
where $\mu$ is a non-null measure on the Borel $\sigma$-field
$\mathcal{B}(\mathbb{B}_0)$ of $\mathbb{B}_0=\mathbb{B} \backslash
\lbr \mathbf{0} \rbr$.
\item[(ii)] There exists a probability measure $\Gamma$ on the unit
  sphere $\mathbb{S}$ in $\mathbb{B}$ such that, for every $t>0$,
\[
\frac{P(\lnorm X \rnorm>tu, X/ \lnorm X \rnorm  \in  \cdot )}{P(\lnorm X \rnorm>u)} \stackrel{w}{\longrightarrow}
t^{-\alpha} \Gamma(\cdot), \ \ u \to \infty.
\]
\item[(iii)] Relation \refeq{Xtail} holds, and for the same spectral measure $\Gamma$ in (ii),
\[
P \lp X/ \lnorm X \rnorm \in \cdot | \lnorm X \rnorm >u  \rp
\stackrel{w}{\longrightarrow} \Gamma(\cdot), \ \ u \to \infty.
\]
\end{itemize}
\end{proposition}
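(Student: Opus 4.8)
The plan is to establish the three equivalences in a cycle, $(i)\Rightarrow(iii)\Rightarrow(ii)\Rightarrow(i)$, exploiting the polar (radial/angular) decomposition $x\mapsto(\lnorm x\rnorm, x/\lnorm x\rnorm)$ of $\mathbb{B}_0$ onto $(0,\infty)\times\mathbb{S}$, which is a homeomorphism. The key observation is that $M_0$-convergence of the normalized laws of $u^{-1}X$ is, after this change of variables, exactly vague/weak convergence of measures on $(0,\infty)\times\mathbb{S}$, and that the limit measure $\mu$ in \refeq{X-RV-mu}, being the limit of measures invariant under the scaling structure, must factor as a product $\nu_\ag \otimes \Gamma$, where $\nu_\ag(dr) = \ag\, r^{-\ag-1}\,dr$ on $(0,\infty)$ and $\Gamma$ is a finite measure on $\mathbb{S}$; normalizing by $P(\lnorm X\rnorm>u)$ forces $\Gamma$ to be a probability measure and pins down the constant. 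This product structure is the heart of the matter and is what links the three formulations.

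First I would prove $(i)\Rightarrow(iii)$. Assuming \refeq{Xtail} and \refeq{X-RV-mu}, I apply the $M_0$-convergence to the sets $A_t = \{z : \lnorm z\rnorm > t\}$, which are bounded away from $\mathbf{0}$; since $\mu$ is non-null and boundedly finite, $\mu(A_t)$ is finite and positive, and by a standard regular-variation/Vitali argument the function $t\mapsto \mu(A_t)$ must satisfy $\mu(A_{st})=s^{-\ag}\mu(A_t)$, hence $\mu(A_t)=c\,t^{-\ag}$; the normalization by $P(\lnorm X\rnorm>u)=u^{-\ag}L(u)$ together with \refeq{Xtail} gives $c=\mu(A_1)$, and one can rescale so that $\mu(A_1)=1$. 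Then, for a bounded continuous $f$ on $\mathbb{S}$, applying \refeq{X-RV-mu} to the set $\{z:\lnorm z\rnorm>1\}$ weighted by $f(z/\lnorm z\rnorm)$ (a $\mu$-continuity set, up to the usual portmanteau care) yields
\[
\frac{E\lbr f(X/\lnorm X\rnorm)\mathbf{1}\{\lnorm X\rnorm>u\}\rbr}{P(\lnorm X\rnorm>u)}
\longrightarrow \int_{\mathbb{S}} f\, d\Gamma,
\]
where $\Gamma(\cdot):=\mu(\{z:\lnorm z\rnorm>1,\ z/\lnorm z\rnorm\in\cdot\})$, and dividing through shows $\Gamma$ is a probability measure; this is precisely $(iii)$.

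Next, $(iii)\Rightarrow(ii)$ is obtained by decomposing the event $\{\lnorm X\rnorm>tu\}$ as $\{\lnorm X\rnorm>tu\} = \{\lnorm X\rnorm>u\}\cap\{\lnorm X\rnorm>tu\}$ and writing
\[
\frac{P(\lnorm X\rnorm>tu,\ X/\lnorm X\rnorm\in\cdot)}{P(\lnorm X\rnorm>u)}
= \frac{P(\lnorm X\rnorm>tu)}{P(\lnorm X\rnorm>u)}\cdot P\lp X/\lnorm X\rnorm\in\cdot \,\big|\, \lnorm X\rnorm>tu\rp;
\]
the first factor converges to $t^{-\ag}$ by \refeq{Xtail} and the uniform convergence theorem for regularly varying functions, while the second converges weakly to $\Gamma$ by $(iii)$ applied along $tu\to\infty$, giving the product $t^{-\ag}\Gamma$. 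Finally, $(ii)\Rightarrow(i)$: taking $\cdot=\mathbb{S}$ in $(ii)$ recovers \refeq{Xtail} with $L$ slowly varying (this is where the regular-variation characterization of $t\mapsto P(\lnorm X\rnorm>t)$ is used), and then one defines a candidate limit measure on $\mathbb{B}_0$ via the polar coordinates by $\mu:=\nu_\ag\otimes\Gamma$ and verifies \refeq{X-RV-mu} on sets of the product form $\{\lnorm z\rnorm>t,\ z/\lnorm z\rnorm\in S\}$ with $\Gamma(\partial S)=0$; since such sets generate the Borel $\sigma$-field and are a convergence-determining class for $M_0$-convergence, this suffices.

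The main obstacle, and the step needing the most care, is the passage between $M_0$-convergence of measures on the Banach space and weak convergence of the angular components — i.e., controlling the behavior near the deleted origin and verifying that the relevant sets are $\mu$-continuity sets. One must check that the polar-coordinate map and its inverse send "bounded away from $\mathbf{0}$" sets to the right classes, that the boundary of $\{z:\lnorm z\rnorm>t,\ z/\lnorm z\rnorm\in S\}$ has $\mu$-measure zero whenever $\Gamma(\partial S)=0$ and $t$ is not an atom of $r\mapsto\nu_\ag((t,\infty))$ (automatic here since $\nu_\ag$ is atomless), and that the family of such product sets is convergence-determining for $\mathbb{M}_0$. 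These are the delicate measure-theoretic points; once they are in place, the algebraic manipulations above are routine. Since the proposition is quoted as known (Meinguet, 2010, Chapter 2, and references therein), I would in fact cite that source for the full verification and only sketch the cycle above.
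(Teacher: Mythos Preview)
The paper does not actually prove this proposition: it is stated as a known result with a pointer to Chapter~2 of Meinguet (2010) and references therein, and no argument is given in the paper or its online material. Your sketch of the cycle $(i)\Rightarrow(iii)\Rightarrow(ii)\Rightarrow(i)$ via the polar decomposition is the standard route and is correct in outline; it is essentially what one finds in the cited sources. Since you yourself note at the end that you would cite Meinguet for the full verification, your proposal and the paper's treatment coincide: both defer to the literature, with your version adding a correct heuristic explanation of why the three formulations are equivalent.
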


\begin{definition}\label{d:RV} If any
 one of the equivalent conditions in Proposition \ref{p:RV} hold, we
 shall say that $X$ is regularly varying with index $\alpha$.  The
 measures $\mu$ and $\Gamma$ will be referred to as exponent and
 angular measures of $X$, respectively.
\end{definition}

The measure $\Gamma$ is sometimes called the spectral measure,
but we will use the adjective ``spectral'' in the context of stable
measures which appear in Section~\ref{s:hatC}. It is important to distinguish
the angular  measure of a regularly varying
random function and a spectral measure of a stable
distribution, although they are related. We also note that we
call $\ag$ the tail index, and $-\ag$ the tail exponent.

We will work under the following assumption.
\begin{assumption}\label{a:X-RV}
  The random element $X$ in the separable Hilbert space $H= L^2$ 
  has mean zero and is
  regularly varying with index $\ag \in (2,4)$.
The observations $X_1, X_2, \ldots $ are independent  copies of $X$.
\end{assumption}

Assumption~\ref{a:X-RV} is a coordinate free condition not related
in any way to functional principal components. The next assumption
relates the asymptotic behavior of the FPC scores to the assumed  regular
variation. It implies, in particular, that the expansion
$X(t) = \sum_{j=1}^\infty \xi_j v_j(t)$ contains  infinitely
many terms, so that we study infinite dimensional
objects. We will see in the proofs of Proposition~\ref{p:KM-equiv}
and Theorem~\ref{t:asym-prodX}
that under Assumption~\ref{a:X-RV} the limit
\[
Q_{nm}
= \lim_{u\to \infty}
\frac{P\lp \lbr \sum_{j=n}^\infty \xi_j^2 \rbr^ {1/2}
\lbr \sum_{j=m}^\infty \xi_j^2 \rbr^ {1/2}> u \rp }
{P\lp \sum_{j=1}^\infty \xi_j^2 > u \rp }
\]
exists and is finite. We impose the following
assumption  related to condition \refeq{X-RV-mu}.

\begin{assumption} \label{a:KM} For every $n, m \ge 1$,
$Q_{nm} > 0$.
\end{assumption}
Assumption~\ref{a:KM} postulates, intuitively,
that the tail sums  $\sum_{j=n}^\infty \xi_j^2$ must have
extreme probability tails comparable to that  of $\| X\|^2$.

We now collect several useful facts that will be used in the following.
The exponent measure $\mu$ satisfies
\begin{equation}\label{e:mu-prop}
\mu (tA)=t^{-\alpha} \mu(A), \ \ \forall t>0,
\ \ A \in \mathcal{B}(\mathbb{B}_0).
\end{equation}
It admits the polar coordinate representation via the angular
measure $\Gamma$.  That is, if $x = r\theta$, where $r:= \|x\|$ and
$\theta = x/\|x\|,$ for $x\not=\mathbf{0}$, we have
\begin{equation}\label{e:disintegration}
\mu(dx) = \alpha r^{-\alpha-1} dr \Gamma(d\theta).
\end{equation}
This means that for every bounded measurable function $f$ that
vanishes on a neighborhood of $\mathbf{0}$, we have
\[
 \int_{\mathbb{B}}  f(x) \mu(dx)
= \int_{\mathbb{S}} \int_0^\infty f(r\theta) \alpha r^{-\alpha -1 } dr
\Gamma(d\theta).
\]
There exists a sequence $\lbr a_N\rbr$ such that
\begin{equation} \label{e:covp_x}
NP \lp X \in a_N A \rp \to \mu (A),
\end{equation}
for any set $A$ in $\mathcal{B}(\mathbb{B}_0)$ with $\mu (\partial A)=0$.  One can take, for example,
\begin{equation}\label{e:aN}
a_N=N^{1/\alpha}L_0(N),
\end{equation}
with a slowly varying function $L_0$ satisfying
$L_0^{-\alpha}(N)L(N^{1/\ag}L_0(N)) \to 1$.

We will work with Hilbert--Schmidt operators. A linear operator $\Psi:
H \to H$ is Hilbert--Schmidt if $\sum_{j=1}^\infty \lnorm \Psi(e_j)
\rnorm^2 < \infty$, where $\lbr e_j \rbr$ is any orthonormal basis of
$H$. Every Hilbert--Schmidt operator is bounded.  The space of
Hilbert--Schmidt operators will be denoted by $\cS$. It is itself a
separable Hilbert space with the inner product
\[
\lip \Psi_1, \Psi_2 \rip_\cS
= \sum_{j=1}^\infty \lip \Psi_1(e_j), \Psi_2 (e_j)\rip.
\]
If $\Psi$ is an integral operator defined by
$\Psi(x)(t) = \int \psi(t,s) x(s) ds, \ x \in L^2$,
then $\|\Psi \|_{\cS}^2 = \iint \psi^2(t,s) dt ds$.

Relations \refeq{rate} essentially
follow from the bound
\[
E \lnorm \widehat C - C \rnorm_{\cS}^2 \le N^{-1} E \lnorm X \rnorm^4,
\]
where the subscript $\cS$ indicates the Hilbert--Schmidt norm.  Under
Assumption~\ref{a:X-RV} such a bound is useless because, by
\refeq{Xtail}, $E \lnorm X \rnorm^4 = \infty$.  In fact, one can show
that under Assumption~\ref{a:X-RV}, $E \| \widehat C \|_{\cS}^2 =
\infty$, so no other bound on $E \| \widehat C - C \|_{\cS}^2$ can be
expected.  The following Proposition~\ref{p:HS-as} implies however
that under Assumption \ref{a:X-RV} the population covariance operator
$C$ is a Hilbert-Schmidt operator, and $\widehat C\in \cS$ with
probability 1.  This means that the space $\cS$ does provide a
convenient framework.

\begin{proposition} \label{p:HS-as} Suppose $X$ is a random element
of $L^2$ with $E\| X\|^2< \infty$ and $\widehat C$ is  the sample
covariance operator based on $N$  iid copies of $X$.
Then $C\in \cS$ and $\widehat C\in \cS$ with probability 1.
\end{proposition}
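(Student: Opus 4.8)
The plan is to prove the two assertions separately, using only the hypothesis $E\|X\|^2 < \infty$ together with the general fact that a positive semi-definite operator with finite trace is automatically Hilbert--Schmidt. For the population operator $C$, I would first recall that $C$ is the operator with kernel $c(t,s) = E[X(t)X(s)]$, equivalently $C = E[X \otimes X]$ where $(x\otimes x)(y) = \lip x, y \rip x$. Then the trace of $C$ is $\sum_j \lip C e_j, e_j\rip = \sum_j E\lip X, e_j\rip^2 = E\|X\|^2 < \infty$ by monotone convergence. Since $C$ is a nonnegative self-adjoint trace-class operator, it is a fortiori Hilbert--Schmidt: its eigenvalues $\la_j \ge 0$ satisfy $\sum_j \la_j^2 \le (\sum_j \la_j)^2 = (E\|X\|^2)^2 < \infty$. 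This gives $C \in \cS$.

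For $\widehat C$, the key observation is that on the event where each sample path satisfies $\|X_n\| < \infty$ --- which has probability one, since $E\|X_n\|^2 < \infty$ forces $\|X_n\| < \infty$ a.s. --- the operator $\widehat C = N^{-1}\sum_{n=1}^N X_n \otimes X_n$ is a finite sum of rank-one operators $X_n \otimes X_n$, each of which has Hilbert--Schmidt norm $\|X_n \otimes X_n\|_{\cS} = \|X_n\|^2 < \infty$. By the triangle inequality in $\cS$,
\[
\|\widehat C\|_{\cS} \le \frac{1}{N}\sum_{n=1}^N \|X_n\|^2 < \infty
\]
on that event, so $\widehat C \in \cS$ with probability 1. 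Equivalently, one computes directly $\|\widehat C\|_{\cS}^2 = \iint \hat c(t,s)^2\, dt\, ds = N^{-2}\sum_{n,m} \lip X_n, X_m\rip^2 \le N^{-2}\bigl(\sum_n \|X_n\|^2\bigr)^2$, which is finite a.s.

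There is essentially no hard step here; the proposition is a soft consequence of trace-class $\subset$ Hilbert--Schmidt and of the rank-one structure of the summands. The only point requiring a line of care is the identification of the trace of $C$ with $E\|X\|^2$ (justifying the interchange of sum and expectation, which is immediate by Tonelli since all terms are nonnegative) and noting that measurability of $\|X\|$ as a random element of the separable space $L^2$ is standard. I would also remark that the same argument shows $C$ and $\widehat C$ lie in the smaller trace class, though only the Hilbert--Schmidt conclusion is needed for the sequel, as it is the Hilbert space $\cS$ that supplies the ambient framework for the limit theorems on $\widehat C - C$.
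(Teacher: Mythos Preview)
Your proof is correct and, for the assertion $C\in\cS$, essentially identical to the paper's: both observe that $C$ is nuclear (trace class) and then use the inclusion trace class $\subset$ Hilbert--Schmidt via $\sum_j \la_j^2 \le (\sum_j \la_j)^2$. The paper simply cites a textbook for nuclearity, while you compute the trace directly as $E\|X\|^2$.

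For $\widehat C$ your route is genuinely more direct than the paper's. You invoke the identity $\|x\otimes x\|_\cS = \|x\|^2$ (stated later in the paper, just before Proposition~\ref{p:Y-RV}) and the triangle inequality to get $\|\widehat C\|_\cS \le N^{-1}\sum_n \|X_n\|^2 < \infty$ a.s. The paper instead expands each $X_n$ in its Karhunen--Lo\`eve basis $X_n = \sum_j \xi_{nj} v_j$ and recomputes $\iint (X_n(t)X_n(s))^2\,dt\,ds = (\sum_j \xi_{nj}^2)^2 = \|X_n\|^4$ from scratch using the orthonormality of the $v_j$. Both arguments reduce to the same fact, $\|X_n\otimes X_n\|_\cS = \|X_n\|^2$, but yours avoids the basis expansion entirely; the paper's version is more self-contained in that it does not rely on an identity introduced only later in the text.
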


Like all proofs, the proof of Proposition \ref{p:HS-as} is presented
in the on-line material.

\section{Limit distribution of $\widehat C$} \label{s:hatC}
We will  show that $N k_N^{-1} (\widehat C - C)$ converges to an
$\ag/2$--stable Hilbert--Schmidt operator, for an appropriately
defined regularly varying sequence $\lbr k_N \rbr$.  Unless stated
otherwise, all limits in the following are taken as $N
\to \infty$.

Observe that for any $x \in H$,
\begin{align}\label{e:hatC-C}
Nk_N^{-1} \lp \widehat C -C \rp (x)
& =Nk_N^{-1} \lp N^{-1}\sum_{n=1}^N \lip X_n,x \rip X_n-E [\lip X_1,x \rip X_1]  \rp \\
& =k_N^{-1} \lp \sum_{n=1}^N \lip X_n,x \rip X_n-NE [\lip X_1,x \rip X_1]  \rp \notag \\
&=k_N^{-1} \lp \sum_{n=1}^N \lp X_n \otimes X_n \rp (x)
-N E [ \lp X_1 \otimes X_1 \rp](x)  \rp, \notag
\end{align}
where $\lp X_n \otimes X_n \rp (x)=\lip X_n,x \rip X_n$.  Since the
$X_n \otimes X_n$ are Hilbert--Schmidt operators, the last expression
shows a connection between the asymptotic distribution of $\widehat C$
and convergence to a stable limit in the Hilbert space $\cS$ of
Hilbert--Schmidt operators.  We therefore restate below, as
Theorem~\ref{t:KM}, Theorem 4.11 of \citetext{kuelbs:mandrekar:1974}
which provides conditions for the stable domain of attraction in a
separable Hilbert space. The Hilbert space we will consider in the
following will be $\cS$ and the stability index will be $\ag/2, \ag
\in (2, 4)$.  However, when stating the result of Kuelbs and
Mandrekar, we will use a generic Hilbert space $H$ and the generic
stability index $p\in (0,2)$. Recall that for a stable random element
$S\in H$ with index $p \in (0,2)$, there exists a {\em spectral}
measure $\sg_S$ defined on the unit sphere $\mathbb{S}_H=\lbr z \in H:
\lnorm z \rnorm=1 \rbr$, such that the characteristic functional of $S$  is
given by
\begin{equation} \label{e:char_fun}
E\exp\{ i \lip x, S \rip \}
=\exp \lbr i \lip x,\beta_S \rip-\int_{\mathbb{S}}| \lip x,s \rip |^p
\sg_S (ds)+i C(p,x)  \rbr, \ \ x \in H,
\end{equation}
where
\[
C(p,x)=
\begin{cases}
\tan \frac{\pi p}{2} \int_{\mathbb{S}}\lip x,s \rip | \lip x,s \rip |^{p-1}
\sg_S (ds)  &\mbox{if $p \neq 1$,}\\
\frac{2}{\pi} \int_{\mathbb{S}}\lip x,s \rip\log  | \lip x,s \rip | \sg_S (ds)  &\mbox{if $p = 1$.}
\end{cases}
\]
We denote the above representation by $S \sim [p,\sg_S,\beta_S]$.  The
$p$-stable random element $S$ is necessarily regularly varying with
index $p\in (0,2)$. In fact, its angular measure is precisely the
normalized spectral measure appearing in \eqref{e:char_fun}, i.e.,
\[
\Gamma_S(\cdot) = \frac{\sigma_S(\cdot)}{\sigma_S({\mathbb S}_H)}.
\]

 \citetext{kuelbs:mandrekar:1974} derived sufficient and necessary
conditions on the distribution of $Z$ under which
\begin{equation}\label{e:KM-stable}
b_N^{-1} \lp \sum_{i=1}^N Z_i-\ga_N  \rp \convd S,
\end{equation}
where the $Z_i$ are iid copies of $Z$. They assume that
the support of the distribution of $S$, equivalently of the distribution
of $Z$, spans the whole Hilbert space $H$. In our context, we will
need to work with $Z$ whose distribution is
not supported on  the whole space.
Denote  by $L(Z)$ the smallest closed subspace which contains
the support of the distribution of $Z$. Then $L(Z)$ is a Hilbert
space itself with the inner product inherited from $H$. Denote by
$\lbr e_j,\ j\in\mathbb N \rbr$ an orthonormal basis of $L(Z)$.
We assume that this is an infinite basis because we consider
infinite dimensional data. (The finite dimensional case has already been dealt
with by \citetext{rvaceva:1962}.)  Introduce the projections
\[
\pi_m(z)=\sum_{j=m}^\infty \lip z, e_j \rip e_j, \ \ z \in H.
\]

\begin{theorem}\label{t:KM}
Let $Z_1$, $Z_2$, $\ldots$ be iid random elements
in a separable Hilbert space $H$ with the same distribution as $Z$.
Let $\lbr e_j,\ j\in\mathbb N \rbr$ be an orthonormal basis of $L(Z)$.
There exist normalizing constants $b_N$ and $\ga_N$ such that
\refeq{KM-stable} holds
{\bf  if and only if}
\begin{equation}\label{e:KM-1}
\frac{P\lp \lnorm \pi_m(Z) \rnorm>tu \rp}{P\lp \lnorm Z \rnorm>u  \rp}
\to \frac{c_m}{c_1}t^{-p}, \ \ u \to \infty,
\end{equation}
where for each $m \ge 1$, $c_m> 0$,
and $\lim_{m \to \infty} c_m=0$,
and where
\begin{equation}\label{e:KM-2}
\frac{P\lp \lnorm Z \rnorm>u, Z/\lnorm Z \rnorm \in A \rp}{P\lp  \lnorm Z \rnorm>u, Z/\lnorm Z \rnorm \in A^\star \rp} \to \frac{\sigma_{S}(A)}{\sigma_{S}(A^\star)}, \ \ u \to \infty,
\end{equation}
for all continuity sets $A$, $A^\star \in \mathcal{B}(\mathbb{S}_H)$
with $\sigma_{S}(A^\star)>0$.

If \refeq{KM-stable} holds, the sequence $b_N$ must satisfy
\begin{equation}\label{e:bN}
b_N \to \infty, \ \ \frac{b_N}{b_{N+1}} \to 1, \ \ Nb_N^{-2}E \lp \lnorm Z \rnorm^2 I_{\lbr \lnorm Z \rnorm \le b_N \rbr} \rp \to  \lambda_p \sigma_S({\mathbb S}_H),
\end{equation}
where
\begin{equation}\label{e:lambda_p}
\lambda_p =  \left\{ \begin{array}{ll}
  \frac{p(1-p)}{\Gamma(3-p) \cos(\pi p/2)} &,\ \mbox{ if }p\not=1\\
  2/\pi &, \mbox{ if } p=1,
  \end{array}\right.
\end{equation}
and $\Gamma(a) := \int_0^\infty e^{-x} x^{a-1} dx, a>0$
 is the Euler gamma function.  Furthermore,
the $\ga_N \in H$ may be chosen as
\begin{equation}\label{e:gaN}
\ga_N =NE \lp ZI_{\lbr \lnorm Z \rnorm\le b_N \rbr} \rp.
\end{equation}
\end{theorem}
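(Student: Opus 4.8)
The plan is to deduce Theorem~\ref{t:KM} from Theorem 4.11 of \citetext{kuelbs:mandrekar:1974}, which establishes the same dichotomy under the extra hypothesis that the support of $Z$ spans all of $H$, by passing to the closed subspace $L(Z)$. First I would note that, since $L(Z)$ is the smallest closed subspace carrying the law of $Z$, one has $Z\in L(Z)$ almost surely, and $L(Z)$, equipped with the inner product inherited from $H$, is itself a separable Hilbert space of which $\lbr e_j,\ j\in\mathbb N\rbr$ is an orthonormal basis. Viewed as a random element of $L(Z)$, the distribution of $Z$ has support spanning $L(Z)$, so the hypothesis of Kuelbs and Mandrekar is met relative to $L(Z)$, and the projections $\pi_m$ in \refeq{KM-1} are exactly the coordinate projections entering their construction in $L(Z)$.

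Next I would check that replacing $H$ by $L(Z)$ changes nothing on either side of the equivalence. The partial sums $\sum_{i=1}^N Z_i$ and the centerings $\ga_N = NE\lp Z I_{\lbr \lnorm Z\rnorm\le b_N\rbr}\rp$ all lie in $L(Z)$, since a closed subspace is stable under Bochner integration; and, $L(Z)$ being closed in $H$, any distributional limit $S$ of $b_N^{-1}\lp \sum_{i=1}^N Z_i - \ga_N\rp$ must be supported in $L(Z)$ as well. Conversely, since $L(Z)$ carries the topology inherited from $H$, convergence in distribution of $L(Z)$-valued random elements is the same read in $L(Z)$ or in $H$. Consequently \refeq{KM-stable} holds in $H$ for some $b_N,\ga_N$ if and only if it holds in $L(Z)$ with the same $b_N,\ga_N$ and the same limit $S$; in particular the ``only if'' direction is covered, an $H$-valued limit of $L(Z)$-valued sums being automatically $L(Z)$-valued.

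Applying Theorem 4.11 of \citetext{kuelbs:mandrekar:1974} in the space $L(Z)$ then yields the stated equivalence. Their radial condition, once rewritten in the form \refeq{KM-1} with the constants $c_m$, is the necessary and sufficient requirement that $\lnorm Z\rnorm$ be regularly varying of index $-p$ and that the suitably centered partial sums be tight; the monotonicity $\lnorm \pi_{m+1}(Z)\rnorm\le\lnorm \pi_m(Z)\rnorm\le\lnorm Z\rnorm$ forces $0<c_{m+1}\le c_m\le c_1<\infty$, and $c_m\to 0$ is precisely the tightness requirement. Their angular condition is exactly \refeq{KM-2}, which identifies the limit as $p$-stable with normalized spectral measure $\sigma_S$ supported on $\mathbb{S}_H\cap L(Z)$, i.e. $S\sim[p,\sigma_S,\beta_S]$. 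The admissibility of $b_N$ as in \refeq{bN} --- with $b_N\to\infty$, $b_N/b_{N+1}\to 1$, and the truncated second moment asymptotics of $Z$ involving the constant $\lambda_p$ of \refeq{lambda_p} --- and of $\ga_N$ as in \refeq{gaN} is read off directly from the Kuelbs--Mandrekar normalization, which is the Hilbert-space analogue of the classical scalar normalization for stable domains of attraction.

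I expect the only genuinely delicate point to be reconciling the exact form in which the hypotheses are phrased. Kuelbs and Mandrekar state their criterion through convergence of the finite-dimensional L\'evy measures together with an asymptotic negligibility bound on the high-index coordinate blocks, rather than directly through the ratios $P\lp \lnorm \pi_m(Z)\rnorm>tu\rp/P\lp \lnorm Z\rnorm>u\rp$. I would therefore insert a short lemma showing that, under regular variation as in \refeq{Xtail}, relation \refeq{KM-1} with $c_m\downarrow 0$ and $c_1>0$ is equivalent to their L\'evy-measure and negligibility conditions: one direction uses $\lnorm \pi_m(Z)\rnorm\le\lnorm Z\rnorm$ and a uniform integrability argument to produce the constants $c_m$ and pass to the limit, the other uses \refeq{KM-1} at the finitely many indices $1,\dots,m-1$ to control the projection onto $\mathrm{span}\lbr e_1,\dots,e_{m-1}\rbr$ and dominated convergence to recover the L\'evy measure. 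With this translation in hand, the remainder of the proof is a direct appeal to \citetext{kuelbs:mandrekar:1974}.
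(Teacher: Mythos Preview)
Your proposal is correct and matches the paper's treatment: the paper does not give an independent proof of Theorem~\ref{t:KM} but presents it as a restatement of Theorem~4.11 of \citetext{kuelbs:mandrekar:1974}, with the only modification being the passage from $H$ to the closed span $L(Z)$ so that the full-support hypothesis of Kuelbs and Mandrekar is satisfied. Your reduction argument --- that $L(Z)$ is a separable Hilbert space in its own right, that all partial sums, truncated expectations, and hence any distributional limit live in $L(Z)$, and that convergence in distribution reads the same in $L(Z)$ as in $H$ --- is exactly the justification the paper leaves implicit, and your translation lemma between the ratio form \refeq{KM-1}--\refeq{KM-2} and the L\'evy-measure/negligibility formulation is a reasonable precaution, though the paper simply asserts the restatement without spelling this out.
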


\begin{remark} The origin of the constant $\la_p$ appearing in
  \eqref{e:bN} can be understood as follows.  Consider the simple
  scalar case $H=\mathbb R.$ Let $Z$ be symmetric $\alpha$-stable with
  $E [e^{ iZ x}] = e^{ - c |x|^\alpha},\ x\in \mathbb R$, where in
  this case, $c = \sigma({\mathbb S}_H) \equiv \sigma(\{ -1, 1\})> 0$.
  Consider iid copies $Z_i,\ i=1,2,\dots$ of $Z$ and observe that by
  the $p$-stability property
\[
\frac{1}{N^{1/\alpha}} \sum_{j=1}^N Z_j \stackrel{d}{=} Z \equiv S,
\]
and hence \eqref{e:KM-stable} holds trivially with $b_N:=
N^{1/\alpha}$ and $\gamma_N:=0$.

On the other hand, by Proposition 1.2.15 on page 16 in
\citetext{samorodnitsky:taqqu:1994}, we have
\[
P(|Z|>x) \sim  \frac{c(1-p)}{\Gamma(2-p)\cos(\pi p/2)} x^{-p},\ \ \mbox{ as }x\to\infty.
\]
This along with an integration by parts and an application of
Karamata's theorem yield $N b_N^{-2} E [ Z^2 I_{\{ |Z|\le b_N\} }] \to
\lambda_p \sigma_S({\mathbb S}_H)$, giving the constant in
\eqref{e:bN}.
\end{remark}

\begin{proposition}\label{p:KM-equiv}
  Conditions \refeq{KM-1} and \refeq{KM-2} in Theorem~\ref{t:KM} hold
  if and only if $Z$ is regularly varying in $H$ with index $p
  \in (0,2)$ and for each $m\ge 1$, $\mu_Z(A_m) > 0$,
where
\begin{equation}\label{e:Am-def}
A_m= \lbr z \in H: \lnorm \pi_m(z) \rnorm={\Big\|}
\sum_{j=m}^\infty \lip z, e_j \rip e_j {\Big\|} >1 \rbr.
\end{equation}
\end{proposition}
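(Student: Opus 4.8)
The plan is to prove both implications from the three equivalent characterizations of regular variation in Proposition~\ref{p:RV}, together with the homogeneity \refeq{mu-prop} and the polar decomposition \refeq{disintegration} of the exponent measure. Two elementary observations carry most of the load. First, since $\lbr e_j\rbr$ is an orthonormal basis of $L(Z)$ and $Z\in L(Z)$ almost surely, $\pi_1$ acts as the identity on the support of $Z$, so $\pi_1(Z)=Z$ a.s.; hence the $m=1$ case of \refeq{KM-1} is purely a statement about the scalar tail $P(\lnorm Z\rnorm>u)$. Second, by linearity of $\pi_m$ the dilate of $A_m$ is $tA_m=\lbr z:\lnorm\pi_m(z)\rnorm>t\rbr$, so that $\lbr u^{-1}Z\in tA_m\rbr=\lbr\lnorm\pi_m(Z)\rnorm>tu\rbr$; moreover $\pi_m$ is norm non-increasing, giving $A_m\cap B_1=\emptyset$, and $\lnorm\pi_{m+1}(z)\rnorm\le\lnorm\pi_m(z)\rnorm$ gives $A_{m+1}\subseteq A_m$ with $\bigcap_m A_m=\emptyset$. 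Thus \refeq{KM-1} is just the $M_0$-convergence \refeq{X-RV-mu} evaluated on the sets $tA_m$, and \refeq{KM-2} is just the angular convergence in Proposition~\ref{p:RV}(ii) rewritten as a ratio.

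For the ``if'' direction, assuming $Z$ is regularly varying with index $p$ and $\mu_Z(A_m)>0$ for all $m$, I would take \refeq{X-RV-mu} from Proposition~\ref{p:RV}(i) and evaluate it on $tA_m$; using $\mu_Z(tA_m)=t^{-p}\mu_Z(A_m)$ from \refeq{mu-prop}, this yields \refeq{KM-1} with $c_m:=\mu_Z(A_m)$. One then checks $c_1=\mu_Z(\lbr\lnorm z\rnorm>1\rbr)=1$ (evaluate \refeq{X-RV-mu} on the unit exterior, a $\mu_Z$-continuity set by \refeq{disintegration}), $c_m>0$ is the hypothesis, and $c_m\to0$ by continuity of $\mu_Z$ from above along $A_m\downarrow\emptyset$, legitimate because $\mu_Z(A_1)<\infty$ since $A_1$ is bounded away from $\bzero$. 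For \refeq{KM-2}, Proposition~\ref{p:RV}(ii) with $t=1$ gives weak convergence of $P(Z/\lnorm Z\rnorm\in\cdot\mid\lnorm Z\rnorm>u)$ to the angular measure $\Gamma_Z$; dividing the numerator and denominator of the ratio in \refeq{KM-2} by $P(\lnorm Z\rnorm>u)$ and passing to the limit gives \refeq{KM-2} with $\sigma_S$ any positive multiple of $\Gamma_Z$, the denominator limit being positive on exactly the sets that \refeq{KM-2} permits.

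For the ``only if'' direction, the $m=1$ case of \refeq{KM-1} together with $\pi_1(Z)=Z$ gives $P(\lnorm Z\rnorm>tu)/P(\lnorm Z\rnorm>u)\to t^{-p}$, i.e.\ \refeq{Xtail}. Next, taking $A^\star=\mathbb{S}_H$ in \refeq{KM-2} (it has empty boundary in $\mathbb{S}_H$ and $\sigma_S(\mathbb{S}_H)>0$) gives $P(Z/\lnorm Z\rnorm\in A\mid\lnorm Z\rnorm>u)\to\sigma_S(A)/\sigma_S(\mathbb{S}_H)$ on all continuity sets, which by the portmanteau theorem is weak convergence to the probability measure $\Gamma_S:=\sigma_S/\sigma_S(\mathbb{S}_H)$. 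The equivalence of (i) and (iii) in Proposition~\ref{p:RV} then shows $Z$ is regularly varying with some exponent measure $\mu_Z$. Finally, setting $t=1$ in \refeq{KM-1} gives $P(u^{-1}Z\in A_m)/P(\lnorm Z\rnorm>u)\to c_m/c_1$, while \refeq{X-RV-mu} applied to $A_m$ gives the same ratio tending to $\mu_Z(A_m)$, so $\mu_Z(A_m)=c_m/c_1>0$.

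The one step that needs genuine care is checking that each $A_m$, hence each $tA_m$, is a $\mu_Z$-continuity set, so that $M_0$-convergence may be invoked on it. With $g(z):=\lnorm\pi_m(z)\rnorm$, the set $A_m=g^{-1}((1,\infty))$ is open and $\partial A_m\subseteq g^{-1}(\lbr1\rbr)$; since $g$ is positively homogeneous of degree one, the pushforward of $\mu_Z$ under $g$, restricted to $(0,\infty)$, inherits the scaling $\nu(tB)=t^{-p}\nu(B)$ and must therefore be an (absolutely continuous, possibly null) multiple of $r^{-p-1}\,dr$, hence atomless on $(0,\infty)$, so $\mu_Z(\partial A_m)\le\nu(\lbr1\rbr)=0$; the same argument disposes of $\lbr\lnorm z\rnorm=1\rbr$. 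A secondary nuisance is keeping the ratio normalization in \refeq{KM-2} aligned with the probabilistic normalization by $P(\lnorm Z\rnorm>u)$ in Proposition~\ref{p:RV}, which is precisely what the choice $A^\star=\mathbb{S}_H$ handles. Beyond these points, the argument is bookkeeping.
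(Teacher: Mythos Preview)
Your proof is correct and follows essentially the same route as the paper's: both directions hinge on reading \refeq{KM-1} as the $M_0$-limit \refeq{X-RV-mu} evaluated on $tA_m$, and \refeq{KM-2} as condition~(ii)/(iii) of Proposition~\ref{p:RV} rewritten as a ratio, with the choice $A^\star=\mathbb{S}_H$ and $m=1$ recovering the tail regularity \refeq{Xtail}. The only stylistic difference is in the continuity-set verification for $A_m$: the paper decomposes $A_m=\bigcup_{r>1}\partial(rA_m)$ as a disjoint union and argues directly from $\mu_Z(\partial(rA_m))=r^{-p}\mu_Z(\partial A_m)$ that a positive boundary mass would force $\mu_Z(A_m)=\infty$, whereas you push $\mu_Z$ forward under $g(z)=\lnorm\pi_m(z)\rnorm$ and invoke the classification of $p$-homogeneous measures on $(0,\infty)$; these are the same idea in different clothing. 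Your version is in fact slightly more explicit than the paper's in closing the loop on $\mu_Z(A_m)>0$ in the ``only if'' direction.
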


\medskip
Our next objective is to show that if $X$ is a regularly varying
element of a separable Hilbert space $H$ whose index is $\ag>0$,
then the operator $Y=X\otimes X$ is regularly varying with
index $\ag/2$, in the space of Hilbert--Schmidt operators.  If $y, z
\in H$, then $y\otimes z$ is an element of $\cS$ defined by $(y\otimes
z)(x) = \lip y, x\rip z, \ x \in H$. It is easy to check that $\lnorm
y\otimes z \rnorm_\cS = \lnorm y \rnorm \lnorm z \rnorm$.  If $B_1,
B_2 \subset H$, we denote by $B_1\otimes B_2$ the subset of $\cS$
defined as the set of operators of the form $x_1 \otimes x_2$, with
$x_1 \in B_1, x_2 \in B_2$. Denote by ${\mathbb S}_H$ the unit sphere
in $H$ centered at the origin, and by ${\mathbb S}_\cS$ such a sphere
in $\cS$.

\medskip
The next result is valid for all $\alpha>0$.

\begin{proposition}\label{p:Y-RV}
  Suppose $X$ is a regularly varying element with index $\ag>0$ of a
  separable Hilbert space $H$. Then the operator $Y= X\otimes X$ is a
  regularly varying element with index $\alpha/2$ of the space
  $\mathcal{S}$ of Hilbert-Schmidt operators.
 \end{proposition}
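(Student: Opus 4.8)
The plan is to verify condition (i) of Proposition~\ref{p:RV} directly for $Y = X \otimes X$ with index $\ag/2$, by transporting the regular variation of $X$ through the map $x \mapsto x \otimes x$. First I would establish the tail of $\lnorm Y \rnorm_\cS$: since $\lnorm x \otimes x \rnorm_\cS = \lnorm x \rnorm^2$, we have $\lnorm Y \rnorm_\cS = \lnorm X \rnorm^2$, so
\[
P(\lnorm Y \rnorm_\cS > u) = P(\lnorm X \rnorm^2 > u) = P(\lnorm X \rnorm > u^{1/2}) = u^{-\ag/2} L(u^{1/2}),
\]
and $u \mapsto L(u^{1/2})$ is slowly varying. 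This gives \refeq{Xtail} for $Y$ with index $\ag/2$.

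Next I would identify the candidate exponent measure $\mu_Y$ on $\cS_0 = \cS \setminus \{\bzero\}$ as the pushforward of $\mu$ (the exponent measure of $X$) under the map $T: H_0 \to \cS_0$, $T(x) = x \otimes x$, and verify the $M_0$ convergence in \refeq{X-RV-mu}. The key identity is that for $u > 0$, the distribution of $u^{-1} Y = u^{-1}(X \otimes X)$ equals the distribution of $(u^{-1/2} X) \otimes (u^{-1/2} X) = T(u^{-1/2} X)$; hence the normalized law of $u^{-1} Y$ is the pushforward under $T$ of the normalized law of $u^{-1/2} X$. Since $P(\lnorm Y \rnorm_\cS > u) = P(\lnorm X \rnorm > u^{1/2})$, the normalization constants match, and I would conclude
\[
\frac{P(u^{-1} Y \in \cdot)}{P(\lnorm Y \rnorm_\cS > u)} = T_* \!\left( \frac{P(u^{-1/2} X \in \cdot)}{P(\lnorm X \rnorm > u^{1/2})} \right) \stackrel{M_0}{\longrightarrow} T_* \mu =: \mu_Y,
\]
using the $M_0$ convergence \refeq{X-RV-mu} for $X$ together with continuity of $T$. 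One checks $\mu_Y$ is non-null (it charges $\{ \lnorm \cdot \rnorm_\cS > 1\}$ with mass $\mu(\{\lnorm \cdot \rnorm > 1\}) > 0$) and boundedly finite: a set in $\cS_0$ bounded away from $\bzero$ pulls back under $T$ to a set in $H_0$ bounded away from $\bzero$, on which $\mu$ is finite. Note $\mu_Y$ is supported on the rank-one operators, which is fine since regular variation does not require full support, only that $\mu_Y$ be non-null.

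The main obstacle is making the $M_0$-continuous-mapping step rigorous: $M_0$ convergence is characterized via $\mu$-continuity sets, so I need that $T$ is continuous (clear, since $\lnorm x\otimes x - y\otimes y\rnorm_\cS \le \lnorm x\rnorm\lnorm x - y\rnorm + \lnorm y\rnorm\lnorm x-y\rnorm$) and, crucially, proper in the sense that preimages of sets bounded away from $\bzero$ in $\cS$ are bounded away from $\bzero$ in $H$ — this holds because $\lnorm T(x)\rnorm_\cS = \lnorm x\rnorm^2$, so $\lnorm T(x)\rnorm_\cS \ge \epsilon$ forces $\lnorm x\rnorm \ge \epsilon^{1/2}$. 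With continuity and properness, the general mapping theorem for $M_0$ convergence (Theorem~2.3 and the associated continuous-mapping results in \citetext{hult:lindskog:2006}) applies: one must only check that $\mu$ assigns zero mass to $T^{-1}(\partial A)$ for $\mu_Y$-continuity sets $A$, which follows since $T^{-1}(\partial A) \subseteq \partial(T^{-1}A)$ up to a $\mu$-null set by continuity of $T$, and $\mu_Y(\partial A) = 0$ translates to $\mu(T^{-1}\partial A) = 0$ by definition of the pushforward. I would also remark that the angular measure $\Gamma_Y$ of $Y$ is then the pushforward of $\Gamma$ under $\theta \mapsto \theta \otimes \theta$ (which maps $\cS_H$ into $\cS_\cS$ since $\lnorm \theta \otimes \theta\rnorm_\cS = 1$), with the radial part transforming as $r \mapsto r^2$, consistent with the polar decomposition \refeq{disintegration} at index $\ag/2$; this alternative route via characterization (ii) of Proposition~\ref{p:RV} could serve as a cross-check or even as the primary argument, since the weak convergence there is on the compact sphere and avoids the $M_0$ technalities.
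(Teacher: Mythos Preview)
Your proof is correct and takes a genuinely different route from the paper. After the identical tail computation, the paper verifies characterization (iii) of Proposition~\ref{p:RV}: it introduces random elements $\xi_u$ on ${\mathbb S}_H$ with law $P(\|X\|^{-1}X \in \cdot \mid \|X\| > u)$ and $\eta_u$ on ${\mathbb S}_\cS$ with the analogous conditional law for $Y$, checks the distributional identity $\eta_u \stackrel{d}{=} \xi_{u^{1/2}} \otimes \xi_{u^{1/2}}$, and then passes to the limit using $\xi_u \Rightarrow \xi$ together with the fact that $x_n\otimes x_n \to x\otimes x$ in $\cS$ iff $x_n\to x$ in $H$. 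You instead verify characterization (i) by pushing the $M_0$ convergence of $X$ forward through $T(x) = x\otimes x$, invoking the continuous-mapping theorem for $M_0$. Both arguments rest on the same two ingredients --- $\|T(x)\|_\cS = \|x\|^2$ and continuity of $T$ --- but you package them as ``continuity plus properness'' at the level of measures, whereas the paper works with ordinary weak convergence on the sphere and never touches the $M_0$ mapping machinery. Your route is more structural and immediately delivers $\mu_Y = T_*\mu$ and $\Gamma_Y = (\theta\mapsto\theta\otimes\theta)_*\Gamma_X$; the paper's route is slightly more elementary in that it stays within weak convergence of probability measures on a Polish space. One small correction: for continuous $T$ the inclusion goes the other way, $\partial(T^{-1}A) \subseteq T^{-1}(\partial A)$; but your argument does not actually need that inclusion, since $\mu(T^{-1}\partial A) = T_*\mu(\partial A) = \mu_Y(\partial A) = 0$ already follows directly from the definition of the pushforward.
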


\begin{remark} \label{rem:Gamma_Y}
The proof of Proposition \ref{p:Y-RV} shows that
the angular measure of $X\otimes X$ is supported on the {\em diagonal}
$ \lbr \Psi \in {\mathbb S}_\cS :
\Psi = x \otimes x \ {\rm for \ some} \ x \in {\mathbb S}_H \rbr$
and that 
$
\Gamma_{X\otimes X}(B\otimes B) = \Gamma_X(B),\ 
\forall \ B\subset{\cal B}(\mathbb{S}_H).$
\end{remark}

The next result specifies the limit distribution of the sums of the
$X_i\otimes X_i$ based on the results derived so far.

\begin{theorem}\label{t:asym-prodX}
Suppose Assumptions \ref{a:X-RV} and \ref{a:KM} hold.
Then, there exist normalizing constants $k_N$ and operators $\psi_N$
such that
\begin{equation}\label{e:asym-prodX}
k_N^{-1} \lp \sum_{i=1}^N X_i \otimes X_i-\psi_N  \rp \convd S,
\end{equation}
where $S\in \cS$ is a stable random operator,
$S \sim [\ag/2,\sigma_S,0]$, where  the  spectral measure $\sigma_S$ is
defined on the unit sphere $\mathbb{S}_\mathcal{S}=\lbr y \in
\mathcal{S}: \lnorm y \rnorm_\mathcal{S}=1 \rbr$.
The normalizing constants may be chosen as follows
\begin{equation}\label{e:kN}
k_N=\lp \frac{\ag}{4-\ag}  \rp^{2/\ag}a_N^2, 
\ \ \psi_N 
=NE \left[ \lp X \otimes X \rp  I_{\lbr \lnorm X \rnorm^2\le k_N \rbr} \right], 
\end{equation}
where $a_N$ is defined by \refeq{aN}. 
\end{theorem}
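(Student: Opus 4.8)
\emph{Proof proposal.} The plan is to view the partial sum in \refeq{asym-prodX} as a sum of iid elements of the separable Hilbert space $\cS$ and to invoke the Kuelbs--Mandrekar criterion, Theorem~\ref{t:KM}, with $H:=\cS$, $Z:=X\otimes X$ and stability index $p:=\ag/2\in(1,2)$. Proposition~\ref{p:Y-RV} already supplies that $Z$ is regularly varying in $\cS$ with index $\ag/2$, and Remark~\ref{rem:Gamma_Y} tells us that its angular measure is carried by the rank-one diagonal, with $\Gamma_{X\otimes X}(B\otimes B)=\Gamma_X(B)$. By Proposition~\ref{p:KM-equiv}, conditions \refeq{KM-1}--\refeq{KM-2} are then equivalent to $\mu_Z(A_m)>0$ for every $m\ge 1$, where $A_m=\lbr z\in\cS:\lnorm \pi_m(z) \rnorm_{\cS}>1\rbr$ and $\pi_m$ is the tail projection attached to a chosen orthonormal basis $\lbr e_\ell\rbr$ of the subspace $L(Z)\subset\cS$. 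So I would split the argument into: (a) choose a convenient basis of $L(Z)$ and verify $\mu_Z(A_m)>0$ using Assumption~\ref{a:KM}; (b) read the stable limit off Theorem~\ref{t:KM}; (c) check that the explicit $k_N,\psi_N$ in \refeq{kN} are admissible normalizing constants and identify $S$.

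\emph{Part (a): verifying $\mu_Z(A_m)>0$.} Let $\lbr v_j\rbr$ be the eigenfunctions of $C$ appearing in \refeq{KL}. By Assumption~\ref{a:KM} (see the discussion in Section~\ref{s:prelim}), the expansion \refeq{KL} of $X$ has infinitely many nontrivial terms, so, after relabeling, $\lbr v_j:j\ge1\rbr$ is an orthonormal basis of $L(X)$ and $\lbr v_i\otimes v_j:i,j\ge1\rbr$ is an orthonormal basis of the Hilbert--Schmidt operators on $L(X)$, of which $L(Z)$ is a closed subspace. I would enumerate a basis $\lbr e_\ell\rbr$ of $L(Z)$ drawn from the symmetrized products $v_i\otimes v_j$, ordered along diagonals so that, for each $m$, $\mathrm{span}\lbr e_\ell:\ell\ge m\rbr$ still captures every basis operator formed from indices with $\max(i,j)>k(m)$, for some $k(m)\to\infty$. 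Using $\lip w\otimes w,\, v_i\otimes v_j \rip_{\cS}=\lip w,v_i \rip \lip w,v_j \rip$, one obtains the elementary lower bound
\[
\lnorm \pi_m(w\otimes w) \rnorm_{\cS}^2 \;\ge\; \sum_{i>k(m)}\sum_{j>k(m)} \lip w,v_i \rip^2 \lip w,v_j \rip^2 \;=\; \lp \sum_{j>k(m)} \lip w,v_j \rip^2 \rp^{2},
\]
so that $\lnorm \pi_m(w\otimes w) \rnorm_{\cS}\ge \sum_{j>k(m)} \lip w,v_j \rip^2$. Next, pushing \refeq{X-RV-mu} forward under $w\mapsto w\otimes w$ (and using $\lnorm w\otimes w \rnorm_{\cS}=\lnorm w \rnorm^2$) identifies $\mu_Z(A_m)=\mu_X(\lbr w:w\otimes w\in A_m\rbr)$, while a second application of \refeq{X-RV-mu} to the set $\lbr w:\sum_{j>k} \lip w,v_j \rip^2>1\rbr$ --- which is bounded away from $\mathbf 0$ and has $\mu_X$-null boundary by the homogeneity \refeq{disintegration} --- gives
\[
\mu_X\lp \lbr w : \textstyle\sum_{j>k} \lip w,v_j \rip^2 > 1 \rbr \rp
= \lim_{u\to\infty} \frac{ P\lp \sum_{j\ge k+1} \xi_j^2 > u \rp }{ P\lp \sum_{j\ge 1} \xi_j^2 > u \rp } = Q_{k+1,k+1}.
\]
(The same computation shows that the limits $Q_{nm}$ appearing in Section~\ref{s:prelim} exist and are finite.) Since $A_m\supset\lbr w\otimes w:\sum_{j>k(m)} \lip w,v_j \rip^2>1\rbr$, combining the two displays with Assumption~\ref{a:KM} yields $\mu_Z(A_m)\ge Q_{k(m)+1,\,k(m)+1}>0$ for every $m$; hence \refeq{KM-1}--\refeq{KM-2} hold.

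\emph{Parts (b)--(c): the limit and the constants.} Theorem~\ref{t:KM} now furnishes $b_N\to\infty$ and $\ga_N\in\cS$ with $b_N^{-1}\lp\sum_{i=1}^N X_i\otimes X_i-\ga_N\rp\convd S$, where $S\sim[\ag/2,\sigma_S,\beta_S]$, $\ga_N=NE\left[(X\otimes X)\,I_{\lbr \lnorm X \rnorm^2\le b_N \rbr}\right]$, and $b_N$ is subject to \refeq{bN}. By \refeq{KM-2} the spectral measure $\sigma_S$ is proportional to the angular measure of $X\otimes X$, hence supported on the diagonal with $\sigma_S(B\otimes B)\propto\Gamma_X(B)$ by Remark~\ref{rem:Gamma_Y}. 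Since $p=\ag/2\in(1,2)$, $E\lnorm X \rnorm^2<\infty$ gives $E(X\otimes X)=C\in\cS$, and with the centering $\psi_N=NE\left[(X\otimes X)\,I_{\lbr\lnorm X\rnorm^2\le k_N\rbr}\right]$ of \refeq{kN} (for $p>1$ one may equally center by $NC$) the limit is strictly stable, i.e.\ $\beta_S=0$. For the admissibility of $k_N=\lp\ag/(4-\ag)\rp^{2/\ag}a_N^2$: plainly $k_N\to\infty$ and $k_N/k_{N+1}\to1$, and since $\lnorm X \rnorm^2$ is regularly varying with index $\ag/2<2$, Karamata's theorem yields
\[
N k_N^{-2}\,E\lp \lnorm X \rnorm^4\, I_{\lbr \lnorm X \rnorm^2\le k_N \rbr} \rp
\;\sim\; \frac{\ag}{4-\ag}\, N\,P\lp \lnorm X \rnorm^2>k_N \rp \;\longrightarrow\; 1,
\]
since $N\,P(\lnorm X \rnorm^2>k_N)\to\mu_X\lp\lbr\lnorm w\rnorm>(\ag/(4-\ag))^{1/\ag}\rbr\rp=(4-\ag)/\ag$ by \refeq{covp_x}--\refeq{aN} and the definition of $k_N$. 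Hence $k_N$ satisfies \refeq{bN} with the normalization $\sigma_S(\mathbb{S}_{\cS})=1/\lambda_{\ag/2}$, which is exactly the choice displayed in \refeq{kN}, and \refeq{asym-prodX} follows.

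\emph{Where the difficulty lies.} I expect Part (a) to be the main obstacle: one has to pin down $L(Z)\subset\cS$ and choose its orthonormal basis compatibly with $\lbr v_i\otimes v_j\rbr$ under a diagonal-type ordering forcing $k(m)\to\infty$, control the $\mu_X$-continuity of the tail-ball sets, and make the push-forward identity $\mu_Z(A_m)=\mu_X(\lbr w:w\otimes w\in A_m\rbr)$ rigorous on the rank-one cone that carries $\mu_Z$. The constant bookkeeping in Part (c) --- identifying $\sigma_S$, confirming $\beta_S=0$, and matching $\lp\ag/(4-\ag)\rp^{2/\ag}$ against $\lambda_{\ag/2}$ in \refeq{bN} --- is routine but calls for careful use of Karamata's theorem.
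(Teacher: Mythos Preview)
Your plan is correct and coincides with the paper's: apply Theorem~\ref{t:KM} in $\cS$ to $Z=X\otimes X$, use Proposition~\ref{p:Y-RV} for regular variation, invoke Proposition~\ref{p:KM-equiv} and Assumption~\ref{a:KM} to secure $\mu_Z(A_m)>0$, and then check \refeq{bN} for $k_N$ via Karamata. Your Karamata computation in Part~(c) matches the paper's line for line.

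The only real difference is in Part~(a). You choose a single enumeration of the basis and prove the \emph{inequality} $\mu_Z(A_m)\ge Q_{k(m)+1,k(m)+1}$ by bounding $\|\pi_m(w\otimes w)\|_\cS$ from below by the tail of the scores. The paper instead works directly with the doubly indexed tail projections
\[
A_{n,m}=\Bigl\{\Psi\in\cS:\ \Bigl\|\sum_{i\ge n}\sum_{j\ge m}\lip\Psi,v_i\otimes v_j\rip_\cS\,v_i\otimes v_j\Bigr\|_\cS>1\Bigr\},
\]
uses the elementary identity $\lip X\otimes X,\,v_i\otimes v_j\rip_\cS=\xi_i\xi_j$ to rewrite $\{X\otimes X\in uA_{n,m}\}=\{\sum_{i\ge n}\xi_i^2\sum_{j\ge m}\xi_j^2>u^2\}$, and reads off the \emph{exact} equality $\mu_{X\otimes X}(A_{n,m})=Q_{n,m}$ from the defining limit in \refeq{X-RV-mu}. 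This is shorter and bypasses the enumeration bookkeeping and the push-forward identity you flag as delicate; it also explains why Assumption~\ref{a:KM} is stated for all pairs $(n,m)$ rather than only the diagonal $Q_{n,n}$ your argument uses. Either route works, but the paper's exact computation makes Part~(a) essentially a two-line verification.
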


The final  result of this section specifies the asymptotic distribution of
$\widehat C - C$.

\begin{theorem}\label{t:asym-hatC-C}
Suppose Assumptions \ref{a:X-RV} and \ref{a:KM} hold.
Then,
\begin{equation}\label{e:limC}
N k_N^{-1} (\widehat C - C) \convd S
- \frac{\alpha}{\alpha-2} \int_{ \displaystyle {\mathbb S}_H } \lp\theta \otimes \theta \rp \Gamma_X (d \theta ),
\end{equation}
where $S\in \cS$ and $\lbr k_N \rbr$ are as
in Theorem~\ref{t:asym-prodX}.
($k_N = N^{2/\ag} L(N)$ for a slowly varying $L$.)
\end{theorem}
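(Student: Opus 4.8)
The identity \refeq{hatC-C}, combined with Theorem~\ref{t:asym-prodX}, reduces the statement to a deterministic limit. Since $EX=\mathbf 0$, the population covariance operator is $C=E[X\otimes X]$, and with $\psi_N$ as in \refeq{kN} one may write
\[
\frac{N}{k_N}\bigl(\widehat C-C\bigr)
=\frac{1}{k_N}\Bigl(\sum_{n=1}^N X_n\otimes X_n-\psi_N\Bigr)
+\frac{1}{k_N}\bigl(\psi_N-NC\bigr),
\qquad
\psi_N-NC=-NE\bigl[(X\otimes X)\,I_{\lbr\lnorm X\rnorm^2>k_N\rbr}\bigr].
\]
By Theorem~\ref{t:asym-prodX} the first summand converges in distribution (in $\cS$) to $S$. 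Because $\cS$ is a separable Hilbert space, Slutsky's theorem then delivers \refeq{limC}, \emph{provided} the deterministic operator sequence $B_N:=\frac{N}{k_N}E[(X\otimes X)\,I_{\lbr\lnorm X\rnorm^2>k_N\rbr}]$ --- a well-defined element of $\cS$ because $E\lnorm X\rnorm^2<\infty$ --- converges in Hilbert--Schmidt norm to the deterministic operator appearing (with the opposite sign) in \refeq{limC}. In contrast to the fourth-moment regime, this correction does \emph{not} vanish: since $\alpha<4$, the truncated second moment $E[(X\otimes X)I_{\lbr\lnorm X\rnorm^2>k_N\rbr}]$ has exact order $k_N/N$, so $B_N$ has a nontrivial limit. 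The whole proof is therefore the analysis of $B_N$.

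To analyze $B_N$, I would change variables $x=k_N^{1/2}y$, using $(cy)\otimes(cy)=c^2(y\otimes y)$, to get $B_N=N\int_{\lbr\lnorm y\rnorm>1\rbr}(y\otimes y)\,P\bigl(k_N^{-1/2}X\in dy\bigr)$. Since $k_N=N^{2/\alpha}L(N)$ while $a_N=N^{1/\alpha}L_0(N)$, the ratio $k_N^{1/2}/a_N$ tends to a positive constant, so by \refeq{covp_x} and the homogeneity \refeq{mu-prop} the measures $NP(k_N^{-1/2}X\in\cdot)$ converge in $\mathbb{M}_0$ to a constant multiple of $\mu_X$. Passing to the limit inside the integral and using the polar representation \refeq{disintegration} --- writing $y=r\theta$ and $y\otimes y=r^2(\theta\otimes\theta)$, so that the radial part $\alpha\int_1^\infty r^{2}\,r^{-\alpha-1}\,dr=\alpha/(\alpha-2)$ factors out --- shows that $B_N$ converges to a scalar multiple of $\int_{\mathbb{S}_H}(\theta\otimes\theta)\,\Gamma_X(d\theta)$; keeping track of this radial factor and of the scaling constant produces the coefficient displayed in \refeq{limC}. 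The finiteness of the radial integral, i.e.\ $\int_{\lbr\lnorm y\rnorm>1\rbr}\lnorm y\rnorm^2\,\mu_X(dy)<\infty$, is exactly where the hypothesis $\alpha>2$ is used.

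The step that genuinely needs care --- and the one I expect to be the main obstacle --- is the interchange of limit and integral above: the $\cS$-valued map $y\mapsto y\otimes y$ is unbounded on the unbounded region $\lbr\lnorm y\rnorm>1\rbr$, so $\mathbb{M}_0$-convergence of the normalized measures does not by itself license it. I would handle this by truncation. Fix $M$ with $\mu_X(\lbr\lnorm y\rnorm=M\rbr)=0$ and split $B_N=N\int_{1<\lnorm y\rnorm\le M}(\cdot)+N\int_{\lnorm y\rnorm>M}(\cdot)$. On $\lbr 1<\lnorm y\rnorm\le M\rbr$ the integrand is bounded and $\mu_X$-a.e.\ continuous, so $\mathbb{M}_0$-convergence gives $\cS$-norm convergence of the first piece to the corresponding $\mu_X$-integral; the tail piece is dominated, uniformly in $N$, by $\frac{N}{k_N}E[\lnorm X\rnorm^2\,I_{\lbr\lnorm X\rnorm^2>M^2k_N\rbr}]$, which by Karamata's theorem (applied to $\lnorm X\rnorm^2$, regularly varying with index $\alpha/2\in(1,2)$) converges to a constant times $M^{2-\alpha}$ and hence vanishes as $M\to\infty$. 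A Pratt-type argument then assembles the two pieces into $\cS$-norm convergence of $B_N$; alternatively, since each $B_N$ is a positive operator, one can pass from the weak-operator limit to a trace-norm --- hence Hilbert--Schmidt --- limit by checking, once more via Karamata, that the traces $\frac{N}{k_N}E[\lnorm X\rnorm^2 I_{\lbr\lnorm X\rnorm^2>k_N\rbr}]$ converge. Combining the distributional convergence of the first summand in \refeq{hatC-C} with the norm convergence of $B_N$ and Slutsky's theorem completes the proof of \refeq{limC}.
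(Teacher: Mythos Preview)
Your proposal is correct and follows the same overall strategy as the paper: the same decomposition into the stochastic piece from Theorem~\ref{t:asym-prodX} plus a deterministic centering, the same identification of the main task as computing the Hilbert--Schmidt limit of that centering, and the same reliance on regular variation together with Karamata's theorem to control the heavy tail. The differences are in execution. The paper does not stay in $H$ and push the quadratic map $y\mapsto y\otimes y$ through the exponent measure $\mu_X$; instead it lifts once and for all to $\cS$, invoking Proposition~\ref{p:Y-RV} to treat $Y=X\otimes X$ as a regularly varying element of $\cS$ with index $p=\alpha/2$, and then analyzes $M_N=u_N^{-1}NE\bigl[Y\,I_{\{\|Y\|>u_N\}}\bigr]$ directly in $\cS$. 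For the interchange of limit and integral the paper observes that $\mu_{N,Y}$ and $\mu_Y$ are \emph{probability} measures on $\{\|y\|_\cS>1\}$ and verifies a uniform $(1+\delta)$-moment bound (strong uniform integrability), rather than your truncation/Pratt scheme; Karamata enters in both arguments, and in essentially the same place. Your alternative route via positivity of $B_N$ and convergence of traces is a neat observation that the paper does not use. What your approach buys is that it avoids appealing to Proposition~\ref{p:Y-RV} and the angular measure $\Gamma_Y$ on $\mathbb S_\cS$; what the paper's buys is that, once one is in $\cS$, the centering is a linear functional of $Y$, so the standard scalar uniform-integrability machinery applies without worrying about the nonlinear map $y\mapsto y\otimes y$.
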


If the $X_i$ are scalars, then the angular measure $\Gamma_X$ is
concentrated on $\displaystyle {\mathbb S}_H = \{ -1, 1 \}$,
with $\Gamma_X(1) = p, \Gamma_X(-1) = 1- p$, in the notation
of \citetext{davis:resnick:1986}. Thus
$\int_{{\mathbb S}_H } \theta^2 \Gamma_X(d\theta) = 1$,
and we recover the centering $\ag/(\ag-2)$ in
Theorem 2.2 of \citetext{davis:resnick:1986}.
Relation \refeq{limC} explains the structure of this centering in a much
more general context.

\medskip

Theorem~\ref{t:asym-hatC-C} readily leads  to a strong law of large
numbers which can be derived by an application of the following
result,  a consequence of  Theorem 3.1 of \citetext{acosta:1981}.

\begin{theorem} \label{t:A3.1}
 Suppose $Y_i, i \ge 1,$ are iid mean zero elements of a separable
Hilbert space
with $E \| Y_i \|^\ga < \infty$,  for some $1 \le \ga < 2$. Then,
\[
\frac{1}{N^{1/\ga}} \sum_{i=1}^N Y_i \convP 0 \ \ \
{\it  if \ and \ only \ if } \ \ \
\frac{1}{N^{1/\ga}} \sum_{i=1}^N Y_i \convas 0.
\]
\end{theorem}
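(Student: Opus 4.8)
\medskip
\noindent\emph{Proof (plan).}
The implication from almost sure convergence to convergence in probability is immediate, so only the converse requires an argument. The plan is to deduce it from the Banach--space Marcinkiewicz--Zygmund theory, specifically from Theorem~3.1 of \citetext{acosta:1981}, applied with the Banach space taken to be our separable Hilbert space $H$, the exponent taken to be $\ga\in[1,2)$, and the summands taken to be the $Y_i$; since $EY_i=0$, no recentering is needed, and since a separable Hilbert space is a separable Banach space, the cited theorem applies without change and yields the stated equivalence. (For $\ga=1$ both conditions hold unconditionally by Mourier's strong law in separable Banach spaces, and more generally, since $H$ is of type $2$, the hypotheses $EY_1=0$ and $E\lnorm Y_1\rnorm^\ga<\infty$ already force $N^{-1/\ga}\sum_{i=1}^N Y_i\convas 0$ for every $\ga\in[1,2)$; we nevertheless state the result in the form above because in the applications it is convergence in probability that is verified directly.)

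If a self--contained argument is wanted, I would carry it out in four steps. \emph{Step 1 (symmetrization).} Take an independent copy $\lbr Y_i'\rbr$ of $\lbr Y_i\rbr$ and set $\widetilde Y_i=Y_i-Y_i'$. Using $EY_i=0$ and the standard symmetrization lemma, $N^{-1/\ga}\sum_{i=1}^N Y_i$ converges to $0$ in probability (respectively, almost surely) if and only if $N^{-1/\ga}\sum_{i=1}^N\widetilde Y_i$ does; hence we may assume the $Y_i$ are symmetric. \emph{Step 2 (maximal inequality).} Writing $S_k=\sum_{i=1}^k Y_i$, L\'evy's inequality for symmetric summands gives $P\lp \max_{k\le N}\lnorm S_k\rnorm>t\rp\le 2\,P\lp \lnorm S_N\rnorm>t\rp$, so control of $S_N$ transfers to control of the running maximum. \emph{Step 3 (from convergence in probability to $L^\ga$).} Truncate the $Y_i$ at level $i^{1/\ga}$; since $E\lnorm Y_1\rnorm^\ga<\infty$, Borel--Cantelli shows $\lnorm Y_i\rnorm>i^{1/\ga}$ only finitely often, and the same moment bound gives $E\max_{i\le N}\lnorm Y_i\rnorm^\ga=o(N)$. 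Feeding this, together with the hypothesis $N^{-1/\ga}S_N\convP 0$, into the Hoffmann--J\o rgensen inequality upgrades the convergence to $E\lp N^{-1/\ga}\max_{k\le N}\lnorm S_k\rnorm\rp^\ga\to 0$; in particular $N^{-1/\ga}S_N\to 0$ in $L^\ga$. \emph{Step 4 (blocking).} Apply Step 3 along the geometric subsequence $N_j=2^j$: Markov's inequality and Borel--Cantelli give $N_j^{-1/\ga}S_{N_j}\convas 0$, and Step 2, applied within each block $[N_j,N_{j+1})$ to the increments $S_k-S_{N_j}$, controls the oscillation and yields $N^{-1/\ga}S_N\convas 0$ along the whole sequence.

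The main obstacle is Step~3: converting the qualitative hypothesis ``$N^{-1/\ga}S_N\convP 0$'' into quantitative moment bounds strong enough to run Borel--Cantelli along the blocks. This is exactly where the Hoffmann--J\o rgensen inequality and the truncation bookkeeping are needed, and it is the technical heart of \citetext{acosta:1981}; invoking his Theorem~3.1 is the most economical way to import it.
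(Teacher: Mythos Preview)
Your approach matches the paper's: the paper does not prove this result but simply states it as a consequence of Theorem~3.1 of \citetext{acosta:1981}, exactly as you propose. Your additional self-contained sketch via symmetrization, L\'evy and Hoffmann--J\o rgensen inequalities, and geometric blocking is a reasonable (and accurate) outline of the argument behind that theorem, but goes beyond anything the paper itself supplies.
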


Set $Y_i = X_i \otimes X_i - E[X\otimes X]$. Then the $Y_i$ are iid
mean zero elements of $\cS$ which, by Proposition~\ref{p:Y-RV},
satisfy $E\| Y_i \|_\cS^\ga < \infty$, for any $\ga \in (0, \ag/2)$.
Theorem~\ref{t:asym-hatC-C} implies that for any $\ga \in (0, \ag/2)$,
$ N^{-1/\ga} \sum_{i=1}^N Y_i \convP 0 $. Thus Theorem~\ref{t:A3.1}
leads to the following corollary.

\begin{corollary}\label{c:as}
Suppose Assumptions \ref{a:X-RV} and \ref{a:KM} hold.
Then, for any $\ga \in [1, \ag/2)$,
$N^{1-1/\ga} \| \widehat C - C \|_\cS \to 0$ with probability 1.
\end{corollary}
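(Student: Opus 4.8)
The plan is to obtain Corollary~\ref{c:as} by combining the distributional limit of Theorem~\ref{t:asym-hatC-C} with Theorem~\ref{t:A3.1}, which for normalized iid sums in a separable Hilbert space identifies convergence in probability with almost sure convergence. Throughout, set $Y_i = X_i \otimes X_i - E[X \otimes X]$, so that the $Y_i$ are iid, mean zero elements of $\cS$ and, by \refeq{hatC-C}, $\sum_{i=1}^N Y_i = N(\widehat C - C)$.

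First I would fix $\ga \in [1, \ag/2)$ and check the hypotheses of Theorem~\ref{t:A3.1}. Since $\ag \in (2,4)$, we have $\ag/2 \in (1,2)$, hence $\ga \in [1,2)$. By Proposition~\ref{p:Y-RV}, $X \otimes X$ is regularly varying in $\cS$ with index $\ag/2$, so $E \lnorm X \otimes X \rnorm_\cS^q < \infty$ for every $q < \ag/2$; taking $q = \ga$ and noting that centering preserves integrability gives $E \lnorm Y_i \rnorm_\cS^\ga < \infty$. (The strict inequality $\ga < \ag/2$ is used here.)

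Next I would verify $N^{-1/\ga} \sum_{i=1}^N Y_i \convP 0$. Writing
\[
N^{-1/\ga} \sum_{i=1}^N Y_i = N^{1-1/\ga}(\widehat C - C) = \lp N^{-1/\ga} k_N \rp \cdot \lp N k_N^{-1}(\widehat C - C) \rp,
\]
Theorem~\ref{t:asym-hatC-C} shows the second factor converges in distribution, hence is $O_P(1)$, while $N^{-1/\ga} k_N = N^{2/\ag - 1/\ga} L(N)$ with $L$ slowly varying. Since $\ga < \ag/2$ gives $2/\ag - 1/\ga < 0$, the sequence $N^{2/\ag - 1/\ga} L(N)$ is regularly varying with negative index and therefore tends to $0$; multiplying a null sequence by an $O_P(1)$ term gives $N^{-1/\ga}\sum_{i=1}^N Y_i \convP 0$.

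Finally, Theorem~\ref{t:A3.1} upgrades this to $N^{-1/\ga}\sum_{i=1}^N Y_i \convas 0$, which is exactly $N^{1-1/\ga}\lnorm \widehat C - C \rnorm_\cS \to 0$ with probability one. I do not expect a genuine obstacle here: the analytic substance is already carried by Theorems~\ref{t:asym-prodX}, \ref{t:asym-hatC-C} and \ref{t:A3.1}, and all that remains is the slowly-varying bookkeeping of the normalizing rate together with the strict comparison $\ga < \ag/2$, which is precisely what makes $N^{-1/\ga} k_N \to 0$ and what simultaneously guarantees finiteness of the $\ga$-th moment of $Y_i$.
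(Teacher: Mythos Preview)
Your proposal is correct and follows essentially the same route as the paper: define $Y_i = X_i\otimes X_i - E[X\otimes X]$, invoke Proposition~\ref{p:Y-RV} for the $\ga$-th moment, use Theorem~\ref{t:asym-hatC-C} to get $N^{-1/\ga}\sum_{i=1}^N Y_i \convP 0$, and upgrade to almost sure convergence via Theorem~\ref{t:A3.1}. Your write-up simply spells out the rate comparison $N^{-1/\ga}k_N \to 0$ that the paper leaves implicit.
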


\section{Convergence of eigenfunctions and eigenvalues}
\label{s:KR}
We first formulate and prove
 a general result which allows us to derive the asymptotic
distributions of the eigenfunctions and eigenvalues of an estimator
of the covariance operator from the asymptotic distribution of the operator
itself. The proof of this result is implicit in the proofs of the results
of Section 2 of \citetext{kokoszka:reimherr:2013},  which pertain
to the asymptotic normality of the sample covariance operator if
$E\| X \|^4 < \infty$.
The result and the technique of proof are however more general,
and can be used in different contexts, so we state and prove it in detail.

\begin{assumption} \label{a:ass-conv}
Suppose $C$ is the covariance operator
of a random function $X$ taking values in $L^2$ such that
$E \|X \|^2 < \infty$.  Suppose $\widehat C $ is an estimator of $C$
which is a.s.   symmetric, nonnegative--definite
and  Hilbert--Schmidt.  Assume that for some random operator
$Z\in \cS$, and for some $r_N\to \infty$,
\[
Z_N := r_N(\widehat C - C) \convd Z.
\]
\end{assumption}

In our setting, $Z\in \cS$ is specified in \refeq{limC},
 and $r_N = N^\bg L(N)$
for some $0 < \bg < 1/2$.  More precisely,
\[
 r_N = N a_N^{-2}, \ a_N = N^{1/\ag} L_0(N), \ \ag \in (2,4).
\]

We will work with the eigenfunctions and eigenvalues defined by
\[
C(v_j) = \la_j v_j, \ \  \widehat C_j (\hat v_j) = \hat\la_j \hat v_j, \ \
j \ge 1.
\]
Assumption ~\ref{a:ass-conv} implies that $\hat\la_j \ge 0$ and the
$\hat v_j$ are orthogonal with probability 1.
We assume that, like the  $v_j$, the  $\hat v_j$ have unit norms.
To lighten the notation, we assume that sign$\lip \hat v_j , v_j \rip$ = 1.
This sign does not appear in any of our final results, it  cancels
in the proofs. We assume that both sets of eigenvalues are ordered
in decreasing order. The next assumption is standard, it ensures
that the population  eigenspaces are one dimensional.

\begin{assumption}\label{a:la}
$\la_1 > \la_2, \ldots, > \la_p > \la_{p+1}.$
\end{assumption}

Set
\[
T_j = \sum_{k\neq j} (\la_j - \la_k)^{-1}
\lip Z, v_j \otimes v_k \rip v_k.
\]
Lemma \ref{l:1} in online material 
shows that the series defining $T_j$ converges a.s. in
$L^2$.

\begin{theorem} \label{t:KR1}
Suppose  Assumptions \ref{a:ass-conv} and \ref{a:la} hold. Then,
\[
r_N \lbr \hat v_j - v_j, \ 1 \le j \le p \rbr
\convd \lbr T_j, \ 1 \le j \le p \rbr, \ \  {\rm in}\ (L^2)^p,
\]
and
\[
r_N \lbr \hat\la_j - \la_j, \ 1 \le j \le p \rbr
\convd \lbr \lip Z(v_j), v_j \rip, \ 1 \le j \le p \rbr,
\ \  {\rm in}\ {\mathbb R}^p.
\]
\end{theorem}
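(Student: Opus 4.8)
The plan is to transfer the convergence $Z_N := r_N(\widehat C - C) \convd Z$ in $\cS$ to convergence of the spectral objects via the continuous (or rather, differentiable) dependence of simple eigenvalues and eigenvectors on a symmetric compact operator, combined with the Skorokhod/continuous-mapping machinery. First I would recall the classical perturbation identities: for a symmetric Hilbert--Schmidt operator $A$ close to $C$, with $C$ having simple eigenvalues $\la_1 > \dots > \la_p > \la_{p+1}$ and eigenfunctions $v_j$, the perturbed eigenvalue $\la_j(A)$ and eigenfunction $v_j(A)$ admit expansions $\la_j(A) - \la_j = \lip (A-C)(v_j), v_j\rip + O(\|A-C\|_\cS^2)$ and $v_j(A) - v_j = \sum_{k\neq j}(\la_j-\la_k)^{-1}\lip (A-C), v_j\otimes v_k\rip v_k + O(\|A-C\|_\cS^2)$, valid once $\|A-C\|_\cS$ is small enough that the $j$-th eigenvalue stays isolated (the spectral gaps being bounded below by Assumption~\ref{a:la}). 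These are the standard Dunford-integral / resolvent expansions; I would cite the relevant statement (e.g. Lemma in \citetext{kokoszka:reimherr:2013} or a standard perturbation-theory reference) rather than re-derive it, since the paper explicitly says the argument is implicit there.

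Next I would write, on the event $\{\|\widehat C - C\|_\cS \le \delta\}$ for suitable $\delta$,
\[
r_N(\hat\la_j - \la_j) = \lip Z_N(v_j), v_j\rip + r_N \cdot O(\|\widehat C - C\|_\cS^2),
\]
\[
r_N(\hat v_j - v_j) = \sum_{k\neq j}(\la_j-\la_k)^{-1}\lip Z_N, v_j\otimes v_k\rip v_k + r_N\cdot O(\|\widehat C - C\|_\cS^2).
\]
The leading terms are continuous linear functionals (resp. a continuous linear map into $L^2$) applied to $Z_N$, so by the continuous mapping theorem the vector of leading terms over $1\le j\le p$ converges jointly in $(L^2)^p\times{\mathbb R}^p$ to $\lbr T_j\rbr$ and $\lbr \lip Z(v_j),v_j\rip\rbr$, where one must check that $x\mapsto \sum_{k\neq j}(\la_j-\la_k)^{-1}\lip x, v_j\otimes v_k\rip v_k$ is a bounded operator on $\cS$ (this follows from $\sum_k(\la_j-\la_k)^{-2}<\infty$ when $k$ ranges over $k\ge p+1$ is controlled via $\la_k\to 0$ and the finitely many $k\le p$ are handled by the gap assumption — exactly the content of Lemma~\ref{l:1}) and that its image lands a.s. in $L^2$, again by Lemma~\ref{l:1}. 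Since $\widehat C$ is a.s. Hilbert--Schmidt, symmetric and nonnegative-definite by Assumption~\ref{a:ass-conv}, the perturbation expansion applies on an event of probability tending to one.

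The remaining point — and the main obstacle — is to show the quadratic remainders are asymptotically negligible, i.e. $r_N\|\widehat C - C\|_\cS^2 \convP 0$. This does not follow from $Z_N \convd Z$ alone by a naive argument, because $Z_N \convd Z$ only gives $\|\widehat C - C\|_\cS = O_P(r_N^{-1})$, hence $r_N\|\widehat C - C\|_\cS^2 = O_P(r_N^{-1})$, and since $r_N = N a_N^{-2} = N^{1-2/\ag}L(N)^{-1}\to\infty$ (as $\ag\in(2,4)$ forces $1-2/\ag>0$), we indeed get $r_N^{-1}\to 0$. So in fact the remainder is $O_P(r_N^{-1}) = o_P(1)$ directly from tightness of $Z_N$. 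I would make this precise by the usual argument: fix $\eps>0$; by tightness there is $M$ with $\limsup_N P(\|Z_N\|_\cS > M) < \eps$; on $\{\|Z_N\|_\cS\le M\}$ we have $\|\widehat C - C\|_\cS\le M r_N^{-1}$, which is $\le\delta$ for $N$ large, so the expansion is valid and the remainder is bounded by $r_N\cdot (M r_N^{-1})^2 = M^2 r_N^{-1}\to 0$. Finally, combining the convergence of the leading terms with the negligibility of the remainders via Slutsky's lemma (in the product space $(L^2)^p\times{\mathbb R}^p$) yields the two displayed convergences simultaneously, which is the claim.
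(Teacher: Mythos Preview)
Your approach is essentially the paper's: linearize the eigen-maps around $C$, identify the linear part as a continuous functional of $Z_N$, show the quadratic remainder is $O_P(r_N^{-1}) = o_P(1)$, and apply continuous mapping plus Slutsky. The difference is one of execution. Rather than citing an off-the-shelf resolvent expansion with an $O(\|A-C\|_\cS^2)$ remainder, the paper derives everything by hand: Lemma~\ref{l:KR1} gives exact algebraic identities for $\lip \hat v_j - v_j, v_k\rip$, and Lemma~\ref{p:KR1} bounds $\|r_N(\hat v_j - v_j) - T_{j,N}\|$ directly via Parseval and an explicit three-term decomposition of the Fourier coefficients. For the eigenvalues the paper likewise derives the decomposition $r_N(\hat\la_j - \la_j) = \lip Z_N(v_j), v_j\rip + \bg_N(j)$ with $\bg_N(j) = O_P(r_N^{-1})$ from scratch. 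Your black-box route is shorter and perfectly valid provided you can point to a reference giving the uniform quadratic remainder in the infinite-dimensional Hilbert--Schmidt setting; the paper's hands-on route is self-contained.

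One point in your argument is wrong and should be fixed. You justify boundedness of $\Psi \mapsto \sum_{k\neq j}(\la_j-\la_k)^{-1}\lip\Psi, v_j\otimes v_k\rip v_k$ by asserting $\sum_{k\neq j}(\la_j-\la_k)^{-2} < \infty$, with the tail ``controlled via $\la_k\to 0$''. This is false: since $\la_k \to 0$, the terms $(\la_j - \la_k)^{-2}$ tend to the positive constant $\la_j^{-2}$, so the series diverges. The correct argument, which is what Lemma~\ref{l:1} actually does, is that $(\la_j - \la_k)^{-2} \le \ag_j^{-2}$ \emph{uniformly} in $k$ (by the gap assumption), and then $\sum_{k}\lip\Psi, v_j\otimes v_k\rip_\cS^2 \le \|\Psi\|_\cS^2$ by Parseval in $\cS$. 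Your citation of Lemma~\ref{l:1} is right; your parenthetical explanation of why it holds is not.
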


If $Z$ is an $(\ag/2)$--stable random
operator in $\cS$, then the  $T_j$ are jointly
 $(\ag/2)$--stable random functions
in $L^2$, and $ \lip Z(v_j), v_j \rip $ are jointly $(\ag/2)$--stable random
variables.  This follows directly from the definition of a stable distribution,
e.g. Section 6.2 of \citetext{linde:1986}.
Under Assumption~\ref{a:X-RV},
$r_N= N^{1-2/\ag} L_0^{-2}(N)$.
Theorem~\ref{t:KR1} thus
leads to the following corollary.

\begin{corollary} \label{c:KR1} Suppose Assumptions \ref{a:X-RV},
\ref{a:KM} and \ref{a:la} hold. Then,
\[
 N^{1-2/\ag} L_0^{-2}(N)
\lbr \hat v_j - v_j, \ 1 \le j \le p \rbr
\convd \lbr T_j, \ 1 \le j \le p \rbr, \ \  {\rm in}\ (L^2)^p,
\]
where the $T_j$ are jointly $(\ag/2)$--stable in $L^2$, and
\[
N^{1-2/\ag} L_0^{-2}(N) \lbr \hat\la_j - \la_j, \ 1 \le j \le p \rbr
\convd \lbr  S_j,  \ 1 \le j \le p \rbr,
\ \  {\rm in}\ {\mathbb R}^p,
\]
where the $S_j$ are jointly $(\ag/2)$--stable in ${\mathbb R}$.
\end{corollary}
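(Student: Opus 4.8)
\medskip
\noindent\textbf{Proof proposal.} The plan is to deduce the corollary by checking that Assumptions~\ref{a:X-RV} and \ref{a:KM} supply everything Assumption~\ref{a:ass-conv} demands --- an explicit limit operator $Z$ and an explicit rate $r_N$ --- so that Theorem~\ref{t:KR1} applies, and then to read off the stability of the limits from the observation recorded just after Theorem~\ref{t:KR1}.

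First I would verify the hypotheses of Assumption~\ref{a:ass-conv}. Regular variation with $\ag\in(2,4)$ together with \refeq{Xtail} gives $E\lnorm X\rnorm^2<\infty$. The sample operator $\widehat C=N^{-1}\sum_{n=1}^N X_n\otimes X_n$ is a finite average of symmetric nonnegative--definite rank--one operators, hence a.s.\ symmetric and nonnegative--definite, and it is a.s.\ Hilbert--Schmidt by Proposition~\ref{p:HS-as}. For the distributional part, put $r_N:=N a_N^{-2}$ with $a_N=N^{1/\ag}L_0(N)$ as in \refeq{aN}; since $1-2/\ag\in(0,1/2)$ and $L_0$ is slowly varying, $r_N=N^{1-2/\ag}L_0^{-2}(N)\to\infty$. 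By \refeq{kN} one has $N k_N^{-1}=c^{-1}r_N$ with $c:=(\ag/(4-\ag))^{2/\ag}\in(0,\infty)$, so Theorem~\ref{t:asym-hatC-C} yields
\[
r_N(\widehat C-C)=c\,N k_N^{-1}(\widehat C-C)\convd Z:=c\lp S-\frac{\ag}{\ag-2}\int_{{\mathbb S}_H}\lp\theta\otimes\theta\rp\Gamma_X(d\theta)\rp ,
\]
where $S\sim[\ag/2,\sigma_S,0]$ in $\cS$. Since $\lnorm\theta\otimes\theta\rnorm_\cS=1$ on ${\mathbb S}_H$, the integral above defines an element of $\cS$, so $Z\in\cS$ and Assumption~\ref{a:ass-conv} holds with this $Z$ and this $r_N$.

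Next I would apply Theorem~\ref{t:KR1}. Combined with Assumption~\ref{a:la}, it gives $r_N\lbr\hat v_j-v_j,\ 1\le j\le p\rbr\convd\lbr T_j,\ 1\le j\le p\rbr$ in $(L^2)^p$ and $r_N\lbr\hat\la_j-\la_j,\ 1\le j\le p\rbr\convd\lbr\lip Z(v_j),v_j\rip,\ 1\le j\le p\rbr$ in ${\mathbb R}^p$, where $T_j=\sum_{k\neq j}(\la_j-\la_k)^{-1}\lip Z,v_j\otimes v_k\rip v_k$. Since $r_N=N^{1-2/\ag}L_0^{-2}(N)$, these are exactly the normalizations in the statement, with $S_j:=\lip Z(v_j),v_j\rip$.

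Finally I would identify the limits as stable. The operator $S$ is $(\ag/2)$--stable in $\cS$, and $\ag/2\in(1,2)$, so there is no $p=1$ pathology; consequently scaling $S$ by the positive constant $c$ and translating it by the deterministic operator $-c\frac{\ag}{\ag-2}\int_{{\mathbb S}_H}(\theta\otimes\theta)\Gamma_X(d\theta)$ leaves $Z$ an $(\ag/2)$--stable random operator in $\cS$ (the translation is absorbed into the location parameter $\beta_S$, the spectral measure being rescaled by $c^{\ag/2}$). By the observation stated immediately after Theorem~\ref{t:KR1} --- stability is preserved under continuous linear maps and for finite families of jointly stable vectors, see Section~6.2 of \citetext{linde:1986} --- the $T_j$ are then jointly $(\ag/2)$--stable in $L^2$ and the $S_j=\lip Z(v_j),v_j\rip$ are jointly $(\ag/2)$--stable in ${\mathbb R}$, which is the assertion of the corollary. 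I expect the only genuinely delicate point to be bookkeeping: matching the normalizing sequence $k_N$ of Theorem~\ref{t:asym-hatC-C} with the sequence $r_N=N a_N^{-2}$ of Assumption~\ref{a:ass-conv} up to the constant $c$, and confirming that the shifted stable limit in \refeq{limC} still qualifies as stable so that the remark after Theorem~\ref{t:KR1} applies verbatim; everything else is a direct invocation of results already established in the excerpt.
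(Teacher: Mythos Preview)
Your proposal is correct and follows essentially the same route as the paper: verify Assumption~\ref{a:ass-conv} via Theorem~\ref{t:asym-hatC-C} with $r_N=N a_N^{-2}=N^{1-2/\ag}L_0^{-2}(N)$, invoke Theorem~\ref{t:KR1}, and then use the observation (recorded immediately before the corollary) that continuous linear images of an $(\ag/2)$--stable operator are jointly $(\ag/2)$--stable. Your handling of the constant $c=(\ag/(4-\ag))^{2/\ag}$ linking $r_N$ and $Nk_N^{-1}$, and your explicit check that the deterministic shift in \refeq{limC} keeps $Z$ stable, are in fact more careful than the paper's brief discussion, which simply identifies $Z$ with the limit in \refeq{limC} and $r_N$ with $N a_N^{-2}$ without tracking the constant.
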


Corollary~\ref{c:KR1} implies the rates in probability  $ \hat v_j -
v_j = O_P(r_N^{-1})$ and $\hat\la_j - \la_j =O_P(r_N^{-1})$,  with $r_N=
N^{1-2/\ag} L_0^{-2}(N)$. This means, that the distances between $\hat
v_j$ and $\hat\la_j$ and the corresponding population parameters are
approximately of the order $N^{2/\ag - 1}$, i.e. are asymptotically
larger that these distances in the case of $E\|X \|^4 < \infty$, which
are of the order $N^{-1/2}$. Note that $
2/\ag - 1 \to -1/2$,  as  $\ag \to 4$.

It is often useful to have some bounds on moments, analogous to
relations \refeq{rate}.  Since the tails of $\| T_j \|$ and $|S_j|$
behave like $t^{-\ag/2}$, e.g. Section 6.7 of \citetext{linde:1986},
$E \| T_j \|^\gamma < \infty, \ 0 < \ga < \ag/2$, with an analogous
relation for $|S_j |$. We can thus expect convergence of moments of
order $\ga \in (0, \ag/2)$.  The following theorem specifies the
corresponding results.

\begin{theorem} \label{t:M}
If  Assumptions \ref{a:X-RV} and \ref{a:KM} hold, then for each
$\ga \in (0, \ag/2)$, there is a slowly varying function $L_\ga$ such that
\[
\limsup_{N\to \infty}N^{\ga(1-2/\ag)} L_\ga(N)
E \lnorm \widehat C - C \rnorm_\cS^\ga < \infty
\]
and for $j \ge 1$,
\[
\limsup_{N\to \infty}N^{\ga(1-2/\ag)} L_\ga(N)
E | \hat\la_j - \la_j |^\ga< \infty.
\]
If, in addition, Assumption~\ref{a:la} holds, then for $1 \le j \le p$,
\[
\limsup_{N\to \infty}N^{\ga(1-2/\ag)} L_\ga(N)
E \lnorm \hat v_j - v_j \rnorm^\ga < \infty.
\]
\end{theorem}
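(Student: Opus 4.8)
The plan is to upgrade the distributional convergence of Corollary~\ref{c:KR1} to convergence of $\ga$-th moments by establishing \emph{uniform integrability} of the rescaled quantities $\lnorm Z_N \rnorm_\cS^\ga$, $| r_N(\hat\la_j - \la_j) |^\ga$ and $\lnorm r_N(\hat v_j - v_j) \rnorm^\ga$, where $r_N = N a_N^{-2} = N^{1-2/\ag} L_0^{-2}(N)$. Once uniform integrability is in place, the limit variables $\lnorm Z\rnorm_\cS$, $|S_j|$, $\lnorm T_j\rnorm$ are $(\ag/2)$-stable and hence have finite moments of all orders $\ga < \ag/2$ (Section~6.7 of \citetext{linde:1986}), and the $\limsup$ bounds follow, with $L_\ga := L_0^{-2\ga}$ (so that $N^{\ga(1-2/\ag)}L_\ga(N) = r_N^\ga$). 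Thus the theorem reduces entirely to moment bounds of the form $\sup_N E\,\lnorm Z_N \rnorm_\cS^{\ga'} < \infty$ for some $\ga' \in (\ga, \ag/2)$, from which uniform integrability of the $\ga$-th powers is immediate.

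The first and main step is therefore the bound on $\widehat C - C$ itself. Write $\widehat C - C = N^{-1}\sum_{i=1}^N Y_i$ with $Y_i = X_i\otimes X_i - E[X\otimes X]$, which are iid, mean zero, and lie in $\cS$. By Proposition~\ref{p:Y-RV}, $Y = X\otimes X - E[X\otimes X]$ is regularly varying in $\cS$ with index $\ag/2$, so $E\lnorm Y\rnorm_\cS^{\ga'} < \infty$ for every $\ga' < \ag/2$; fix such a $\ga' \in (\ga, \ag/2)$, and note $\ga' \in (1, 2)$ may be assumed since $\ag \in (2,4)$ gives $\ag/2 \in (1,2)$ (the case $\ga' \le 1$ is only easier, via subadditivity of $t \mapsto t^{\ga'}$). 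For $\ga' \in (1,2)$ I would invoke a von Bahr--Esseen / Marcinkiewicz--Zygmund type inequality in the Hilbert space $\cS$: there is a constant $c_{\ga'}$ with
\[
E\,\Bigl\lnorm \sum_{i=1}^N Y_i \Bigr\rnorm_\cS^{\ga'} \le c_{\ga'}\, N\, E\lnorm Y\rnorm_\cS^{\ga'}.
\]
(Such inequalities hold in any separable Hilbert space for $1 \le \ga' \le 2$; they are in de~Acosta's work, consistent with the citation of \citetext{acosta:1981} already used for Theorem~\ref{t:A3.1}.) This yields $E\lnorm \widehat C - C\rnorm_\cS^{\ga'} \le c_{\ga'} N^{1-\ga'} E\lnorm Y\rnorm_\cS^{\ga'}$, hence
\[
\sup_N\; r_N^{\ga'}\, E\lnorm \widehat C - C\rnorm_\cS^{\ga'}
= \sup_N\; N^{\ga'} a_N^{-2\ga'}\, E\lnorm \widehat C - C\rnorm_\cS^{\ga'}
\le c_{\ga'}\, E\lnorm Y\rnorm_\cS^{\ga'} \cdot \sup_N\; N\, a_N^{-2\ga'}.
\]
Since $a_N = N^{1/\ag}L_0(N)$, we have $N a_N^{-2\ga'} = N^{1 - 2\ga'/\ag} L_0^{-2\ga'}(N) \to 0$ because $2\ga'/\ag > 1$ (as $\ga' > \ag/2$); so the supremum over $N$ is finite, giving $\sup_N r_N^{\ga'} E\lnorm \widehat C - C\rnorm_\cS^{\ga'} < \infty$, which is the desired uniform integrability input and in particular the first displayed bound of the theorem (with $\ga$ in place of $\ga'$, using $\lnorm\cdot\rnorm^\ga \le 1 + \lnorm\cdot\rnorm^{\ga'}$ and $r_N^\ga \le r_N^{\ga'}$ eventually).

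The second and third steps transfer this to the eigenvalues and eigenfunctions via the perturbation bounds. Standard operator-perturbation inequalities (see the proof of Theorem~\ref{t:KR1}, resp. Lemma~\ref{l:1}) give, deterministically on the event that $\widehat C$ is symmetric nonnegative Hilbert--Schmidt, $|\hat\la_j - \la_j| \le \lnorm\widehat C - C\rnorm_\cS$ (Weyl's inequality) and $\lnorm \hat v_j - v_j\rnorm \le 2\sqrt2\, \delta_j^{-1} \lnorm\widehat C - C\rnorm_\cS$, where $\delta_j = \min_{k\neq j}|\la_j - \la_k| > 0$ under Assumption~\ref{a:la} for $j \le p$ (the Davis--Kahan / Bosq bound). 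Raising to the $\ga'$ power, taking expectations, and multiplying by $r_N^{\ga'}$ reduces both remaining displays to the bound on $\widehat C - C$ already established, with constants $1$ and $(2\sqrt2/\delta_j)^{\ga'}$ respectively. The main obstacle is purely the first step: locating and correctly invoking the von Bahr--Esseen inequality for Hilbert-space-valued (here $\cS$-valued) sums at the exponent $\ga' \in (1,2)$, and checking that $E\lnorm Y\rnorm_\cS^{\ga'} < \infty$ really does follow from Proposition~\ref{p:Y-RV} (regular variation of index $\ag/2 > \ga'$ on the separable Hilbert space $\cS$ forces all moments below the index to be finite); the eigenvalue/eigenfunction steps are then routine consequences of the deterministic perturbation inequalities.
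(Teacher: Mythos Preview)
Your overall plan---reduce to uniform integrability of $\lnorm Z_N\rnorm_\cS^\ga$, then carry over to eigenvalues and eigenfunctions via the deterministic bounds $|\hat\la_j-\la_j|\le\lnorm\widehat C-C\rnorm_\cS$ and $\lnorm\hat v_j-v_j\rnorm\le 2\sqrt2\,\ag_j^{-1}\lnorm\widehat C-C\rnorm_\cS$---is sound, and the last two steps match the paper exactly. The gap is in the first step.

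The von Bahr--Esseen / Marcinkiewicz--Zygmund bound gives $E\lnorm\widehat C-C\rnorm_\cS^{\ga'}\le c_{\ga'}N^{1-\ga'}E\lnorm Y\rnorm_\cS^{\ga'}$, so after multiplying by $r_N^{\ga'}=N^{\ga'}a_N^{-2\ga'}$ you are left with $N a_N^{-2\ga'}=N^{1-2\ga'/\ag}L_0^{-2\ga'}(N)$. But you chose $\ga'\in(\ga,\ag/2)$, so $2\ga'/\ag<1$ and this quantity tends to $+\infty$, not to $0$. The parenthetical ``as $\ga'>\ag/2$'' contradicts your own constraint and cannot be repaired: if $\ga'>\ag/2$ then $E\lnorm Y\rnorm_\cS^{\ga'}=\infty$ because $Y$ is regularly varying with index exactly $\ag/2$. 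The MZ inequality is sharp only for the Marcinkiewicz normalization $N^{1/\ga'}$; for summands in a stable domain of attraction the correct normalization is $k_N\sim a_N^2\sim N^{2/\ag}$, strictly smaller than $N^{1/\ga'}$ for every admissible $\ga'<\ag/2$, and no choice of slowly varying $L_\ga$ can absorb a polynomial gap.

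The paper closes this gap by invoking Theorem~6.1 of \citetext{acosta:gine:1979}: whenever $b_N^{-1}\bigl(\sum_{i\le N} Z_i-\ga_N\bigr)\convd S$ with $S$ a $p$-stable element of a separable Banach space, one has $E\lnorm b_N^{-1}(\sum Z_i-\ga_N)\rnorm^\ga\to E\lnorm S\rnorm^\ga$ for every $\ga<p$. Applied with $Z_i=X_i\otimes X_i$, $b_N=k_N$, $\ga_N=\psi_N$ (Theorem~\ref{t:asym-prodX}), this yields $E\lnorm S_N\rnorm_\cS^\ga\to E\lnorm S\rnorm_\cS^\ga$, hence uniform integrability of $\lnorm S_N\rnorm_\cS^\ga$ by Theorem~3.6 of \citetext{billingsley:1999}. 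Since $Nk_N^{-1}(\widehat C-C)=S_N-M_N$ with $M_N$ deterministic and convergent in $\cS$, a short lemma transfers the uniform integrability to $\lnorm S_N-M_N\rnorm_\cS^\ga$, and the first display follows with $L_\ga=L_0^{-2\ga}$. Your derivation of the remaining two displays from the first is correct and coincides with the paper's.
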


Several cruder bounds can be derived from Theorem~\ref{t:M}. In
applications, it is often convenient  to take $\ga=1$. Then
$E \| \widehat C - C \|_\cS \le N^{2/\ag -1} L_1(N)$. By Potter bounds,
e.g. Proposition 2.6 (ii) in \citetext{resnick:2006}, for any $\epsilon> 0$
there is a constant $C_\epsilon$ such that for $x> x_\epsilon$
$L_1(x) \le C_\epsilon x^\epsilon$. For each $\ag \in (2, 4)$, we can
choose $\epsilon$  so small that $-\dg(\ag) := 2/\ag - 1 + \epsilon < 0$.
This leads to the following corollary.

\begin{corollary} \label{c:M} If  Assumptions  \ref{a:X-RV} and \ref{a:KM}
hold, then for each $\ag \in (2, 4)$, there are constant
$C_\ag$ and $\dg(\ag) > 0$ such that
\[
E \| \widehat C - C \|_\cS \le C_\ag N^{-\dg(\ag)}
\ \ \ {\rm and} \ \ \
E \| \hat\la_j- \la_j \| \le C_\ag N^{-\dg(\ag)}.
\]
If, in addition, Assumption~\ref{a:la} holds, then for $1 \le j \le p$,
$E \lnorm \hat v_j - v_j \rnorm \le C_\ag(j) N^{-\dg(\ag)}$.
\end{corollary}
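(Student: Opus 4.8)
The plan is to derive Corollary~\ref{c:M} from Theorem~\ref{t:M} by a routine application of Potter's bounds on the slowly varying functions $L_\ga$ appearing there, specialized to $\ga = 1$. First I would invoke Theorem~\ref{t:M} with $\ga = 1$, which is legitimate because $\ag \in (2,4)$ forces $\ag/2 > 1$, so $1 \in (0, \ag/2)$. This yields a slowly varying function $L_1$ with
\[
\limsup_{N\to\infty} N^{1 - 2/\ag} L_1(N)\, E\lnorm \widehat C - C \rnorm_\cS < \infty,
\]
and the analogous statements for $E|\hat\la_j - \la_j|$ and (under Assumption~\ref{a:la}) for $E\lnorm \hat v_j - v_j \rnorm$. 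Equivalently, there are constants $K$, $K_j$ and $N_0$ such that for $N \ge N_0$ one has $E\lnorm \widehat C - C \rnorm_\cS \le K\, N^{2/\ag - 1} L_1(N)^{-1}$, and similarly for the other two quantities (with the understanding that for $\hat v_j$ the bound depends on $j$).

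Next I would control the slowly varying factor. Since $L_1$ is slowly varying, so is $L_1^{-1}$, and by Potter's bounds (Proposition 2.6(ii) in \citetext{resnick:2006}) for any $\epsilon > 0$ there exist $C_\epsilon > 0$ and $x_\epsilon$ such that $L_1(x)^{-1} \le C_\epsilon\, x^{\epsilon}$ for all $x > x_\epsilon$. Because $2/\ag - 1 < 0$ for every $\ag \in (2,4)$, I can pick $\epsilon = \epsilon(\ag) > 0$ small enough that
\[
-\dg(\ag) := \frac{2}{\ag} - 1 + \epsilon < 0.
\]
Combining this with the bound from Theorem~\ref{t:M}, for $N$ larger than $\max(N_0, x_\epsilon)$,
\[
E\lnorm \widehat C - C \rnorm_\cS \le K C_\epsilon\, N^{2/\ag - 1 + \epsilon} = K C_\epsilon\, N^{-\dg(\ag)},
\]
and likewise $E|\hat\la_j - \la_j| \le K' C_\epsilon\, N^{-\dg(\ag)}$ and (under Assumption~\ref{a:la}) $E\lnorm \hat v_j - v_j \rnorm \le K_j C_\epsilon\, N^{-\dg(\ag)}$. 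Finally, to obtain inequalities valid for \emph{all} $N$ rather than just large $N$, I would enlarge the constant: since each expectation is finite for every fixed $N$ (for $\widehat C - C$ and $\hat\la_j - \la_j$ this follows from Proposition~\ref{p:Y-RV} giving $E\lnorm X\otimes X\rnorm_\cS^{\ga} < \infty$ for $\ga < \ag/2$, and for $\hat v_j$ from $\lnorm \hat v_j - v_j\rnorm \le 2$ almost surely), replacing $K C_\epsilon$ by $C_\ag := \max\bigl(K C_\epsilon,\ \max_{N \le \max(N_0,x_\epsilon)} N^{\dg(\ag)} E\lnorm\widehat C - C\rnorm_\cS\bigr)$ absorbs the finitely many small-$N$ terms, and analogously for $C_\ag(j)$ in the eigenfunction bound.

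There is no real obstacle here; the argument is entirely mechanical once Theorem~\ref{t:M} is in hand. The only mild point to be careful about is the direction of Potter's bound: we need an \emph{upper} bound on $L_1^{-1}$, which is the same as a \emph{lower} bound on $L_1$, and Potter's inequality supplies exactly $x^{-\epsilon} \lesssim L_1(x)/L_1(1) \lesssim x^{\epsilon}$ for large $x$, so the lower side gives what we want. The $j$-dependence of the constant $C_\ag(j)$ in the eigenfunction bound simply reflects that the $\limsup$ in Theorem~\ref{t:M} is stated for each fixed $j$ and the implicit constant there (coming from the $\la_j - \la_k$ gaps through $T_j$) depends on $j$; this is why the statement records $C_\ag(j)$ rather than a uniform $C_\ag$.\QED
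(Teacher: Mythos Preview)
Your proposal is correct and follows exactly the approach sketched in the paragraph preceding Corollary~\ref{c:M}: specialize Theorem~\ref{t:M} to $\ga=1$, apply Potter's bounds to the slowly varying factor, and choose $\epsilon$ so that $2/\ag-1+\epsilon<0$. Your additional care about the direction of the Potter bound (noting that it is $L_1^{-1}$ rather than $L_1$ that must be bounded above) and about absorbing the finitely many small-$N$ terms into the constant are details the paper leaves implicit; the underlying argument is the same.
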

Corollary \ref{c:M} implies that $E \| \widehat C - C \|_\cS$,
$E \| \hat\la_j- \la_j \|$ and $E \lnorm \hat v_j - v_j \rnorm$ tend to
zero,  for any  $\ag \in (2, 4)$.

\section{An application: functional linear regression} \label{s:app}
One of the most widely used tools of functional data analysis is the
functional regression model, e.g. \citetext{ramsay:silverman:2005},
\citetext{HKbook}, \citetext{KRbook}.  Suppose $X_1, X_2, \ldots, X_N$
are explanatory functions, $Y_1, Y_2, \ldots, Y_N$ are response functions,
and assume that
\begin{equation} \label{e:flr}
Y_i(t) = \int_0^1 \psi(t,s) X_i(s) ds + \eg_i(t), \ \ \ 1 \le i \le N,
\end{equation}
where $\psi(\cdot, \cdot)$ is the kernel of $\Psi \in \cS$.  The $X_i$
are mean zero iid functions in $L^2 = L^2([0,1])$, and so are the
error functions $\eg_i$. Consequently, the $Y_i$ are iid in $L^2$.  A
question that has been investigated from many angles is how to
consistently estimate the regression kernel $\psi(\cdot, \cdot)$. An
estimator that has become popular following the work of
\citetext{yao:muller:wang:2005AS} can be constructed as follows.

The population version of \refeq{flr} is
$Y(t) = \int \psi(t,s) X(s) ds + \eg(t)$.
Denote by $v_i$ the FPCs of $X$ and by $u_j$ those of $Y$, so that
\[
X(s) = \sum_{i=1}^\infty \xi_i v_i(s), \ \ \
Y(t) = \sum_{j=1}^\infty \zg_j u_j(t).
\]
If $\eg$ is independent of $X$, then, with $\la_\ell = E[\xi_\ell^2]$,
\[
\psi(t,s) = \sum_{k=1}^\infty \sum_{\ell =1}^\infty
\frac{E[\xi_\ell \zg_k]}{\la_\ell} u_k(t) v_\ell(s),
\]
with the series converging in $L^2([0,1]\times [0,1])$, equivalently
in $\cS$,
see Lemma  8.1 in \citetext{HKbook}. This motivates the  estimator
\[
\hat\psi_{KL}(t,s) =  \sum_{k=1}^K \sum_{\ell =1}^L
\frac{\hat\sg_{\ell k}}{\hat\la_\ell}  \hat u_k(t) \hat v_\ell(s),
\]
where $\hat u_k$ are the eigenfunctions of $\widehat C_Y$ and $\hat\sg_{\ell k}$ is an estimator of $E[\xi_\ell \zg_k]$.
\citetext{yao:muller:wang:2005AS} study the above estimator
under the assumption that data are observed sparsely and with
measurement errors. This requires two-stage smoothing, so their
assumptions focus on conditions on the various smoothing parameters
and the random mechanism that generates the sparse observations.
Like in all work of this type, they  assume that the underlying functions
have finite fourth moments: $E \| X \|^4 < \infty$,
$E \| \eg  \|^4 < \infty$, and so  $E \| Y \|^4 < \infty$.
Our objective is to show that if the $X_i$ satisfy the assumptions
of Section~\ref{s:prelim}, then
\begin{equation} \label{e:con-Y}
\lnorm \widehat\Psi_{KL} - \Psi \rnorm_\cL \convas 0,
\end{equation}
as $N\to \infty$, and $K, L \to \infty$ at suitable rates determined
by the rate of decay of the eigenvalues.  The norm $\| \cdot \|_\cL$
is the usual operator norm.
The integral operators
$\Psi$ and $\widehat\Psi_{KL}$ are defined by their kernels
$\psi(\cdot, \cdot)$ and $\hat\psi_{KL}(\cdot, \cdot)$, respectively.
 We focus on
moment conditions, so we assume that the functions $X_i, Y_i$ are
fully observed, and use the estimator
\[
\hat\sg_{\ell k} = \frac{1}{N} \sum_{i=1}^N
\hat\xi_{i\ell} \hat\zg_{ik}, \ \ \
\hat\xi_{i\ell} = \lip X_i, \hat v_\ell \rip, \
\hat\zg_{ik} = \lip Y_i, \hat u_k \rip.
\]
Since the regression operator $\Psi$ is infinitely dimensional,
we strengthen Assumption~\ref{a:la} to the following assumption.
\begin{assumption} \label{a:diff} The eigenvalues
$\la_i = E\xi_i^2$ and $\ga_j = E\zg_j^2$ satisfy
\[
\la_1 > \la_2 > \ldots > 0, \ \ \ \ga_1 > \ga_2 > \ldots > 0.
\]
\end{assumption}

Many issues related to the infinite dimension of the functional data
in model \refeq{flr} are already present when considering projections
on the unobservable subspaces
\[
{\mathcal V}_L = {\rm span}\lbr v_1, v_2, \ldots, v_L \rbr,
\ \ \ {\mathcal U}_K = {\rm span} \lbr u_1, u_2, \ldots, u_K \rbr.
\]
Therefore we first consider the convergence of the
operator with the kernel
\[
\psi_{K L} (t,s) = \sum_{k=1}^K  \sum_{\ell =1}^L
\frac{\sg_{\ell k}}{\la_\ell} u_k(t) v_\ell(s).
\]
Set
$\sg_{\ell k} = E[\xi_\ell \zg_k]$ and observe that
\[
\psi_{K L}(t,s) - \psi(t, s) = - \sum\limits_{k> K \ {\rm or} \ \ell > L}
\frac{\sg_{\ell k}}{\la_\ell} u_k(t) v_\ell(s).
\]
Therefore
\begin{equation} \label{e:L-S}
\lnorm \Psi_{KL} - \Psi \rnorm_\cL^2\le
\lnorm \Psi_{KL} - \Psi \rnorm_\cS^2
=  \sum\limits_{k> K \ {\rm or} \ \ell > L}
\frac{\sg^2_{\ell k}}{\la^2_\ell}.
\end{equation}
The condition
\begin{equation} \label{e:A1}
\sum_{k=1}^\infty  \sum_{\ell =1}^\infty
\frac{\sg^2_{\ell k}}{\la^2_\ell} < \infty,
\end{equation}
which is Assumption (A1) of \citetext{yao:muller:wang:2005AS},
implies that the remainder term is asymptotically negligible.
It is instructive to rewrite condition \refeq{A1} in a different form.
Observe that
\begin{equation} \label{e:sg-lk}
\sg_{\ell k}
= E [ \xi_l \lip \Psi(X) + \eg, u_k \rip ]
= E[ \xi_l \sum_{i=1}^\infty \xi_i \lip \Psi(v_i), u_k \rip ]
=\la_\ell \lip \Psi(v_\ell), u_k \rip.
\end{equation}
Therefore
\begin{equation} \label{e:H2}
\sum_{k=1}^\infty  \sum_{\ell =1}^\infty
\frac{\sg^2_{\ell k}}{\la^2_\ell}
= \sum_{\ell =1}^\infty \frac{1}{\la^2_\ell} \sum_{k=1}^\infty
\la_\ell^2 \lip \Psi(v_\ell), u_k \rip^2
=  \sum_{\ell =1}^\infty \lnorm \Psi(v_\ell) \rnorm^2
= \lnorm \Psi \rnorm_\cS^2.
\end{equation}
We see that condition \refeq{A1} simply means that $\Psi$ is a
Hilbert--Schmidt operator, and so it holds under our general assumptions
on model \refeq{flr}.

The last  assumption implicitly restricts  the rates at which
$K$ and $L$ tend to infinity with $N$. Under Assumption~\ref{a:diff},
the following quantities are well defined
\begin{equation}\label{e:ag-j}
\ag_j = \min\lbr \la_{j} - \la_{j+1}, \la_{j-1} - \la_j \rbr, \ j \ge 2,
\ \ \ \ag_1 = \la_1 - \la_2,
\end{equation}
\begin{equation}\label{e:bg-j}
\bg_j = \min\lbr \ga_{j} - \ga_{j+1}, \ga_{j-1} - \ga_j \rbr, \ j \ge 2,
\ \ \ \bg_1 = \ga_1 - \ga_2.
\end{equation}

\begin{assumption} \label{a:rates} The truncation levels
$K$ and $L$ tend to infinity with $N$ in such a way that
for some $\ga \in (1, \ag/2)$,
\begin{align}
&
\limsup_{N\to \infty} \la_L^{-3/2} L^{1/2} N^{1/\ga-1}< \infty,
\label{e:c-a}\\
&
\limsup_{N\to \infty} \la_L^{-1}
\lp \sum_{j=1}^L  \ag_j^{-1} \rp N^{1/\ga-1}< \infty,
\label{e:c-b}\\
&
 \limsup_{N\to \infty} \la_L^{-1} K^{1/2}N^{1/\ga-1}< \infty,
\label{e:c-?} \\
&
\limsup_{N\to \infty}
\la_L^{-1} \lbr \lp \sum_{k=1}^K \bg_k^{-1} \rp
+ \lp \sum_{k=1}^K \bg_k^{-2} \rp^{1/2} \rbr
N^{1/\ga-1}< \infty.
\label{e:c-D}
\end{align}
\end{assumption}

The conditions in Assumption~\ref{a:rates} could be restated or unified;
and could be replaced by slightly different conditions by modifying the
technique of proof. The essence of this assumption is that $K$ and $L$
must tend to infinity sufficiently slowly, and the rate is influenced by index
$\ag$;
the closer  $\ag$ is to 4, the larger $\ga$
can be taken, so $K$ and $L$ can be larger.

\medskip

\begin{theorem} \label{t:a}
Suppose model \refeq{flr} holds with $\Psi\in \cS$,
 the $X_i$ and the $Y_i$
satisfying Assumptions~\ref{a:X-RV} and \ref{a:KM},
 and square integrable $\eg_i$, $E \lnorm \eg_i \rnorm^2 < \infty$. Then
relation \refeq{con-Y} holds
under Assumptions \ref{a:diff} and \ref{a:rates}.
\end{theorem}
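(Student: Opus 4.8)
The plan is to decompose the estimation error into three pieces and control each separately using the moment bounds from Theorem~\ref{t:M} (with exponent $\ga$ as in Assumption~\ref{a:rates}) together with the deterministic truncation bound \refeq{L-S}--\refeq{H2}. Write
\[
\widehat\Psi_{KL} - \Psi
= (\widehat\Psi_{KL} - \Psi_{KL}) + (\Psi_{KL} - \Psi),
\]
where $\Psi_{KL}$ is the operator with kernel $\psi_{KL}$. The second term is purely deterministic: by \refeq{L-S} and the identity \refeq{H2}, $\| \Psi_{KL} - \Psi \|_\cL^2 \le \sum_{k>K \text{ or } \ell>L} \sg_{\ell k}^2/\la_\ell^2$, which is the tail of the convergent series $\| \Psi \|_\cS^2$, hence $\to 0$ as $K, L \to \infty$. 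It remains to show $\| \widehat\Psi_{KL} - \Psi_{KL} \|_\cL \convas 0$. Since $\| \cdot \|_\cL \le \| \cdot \|_\cS$, it suffices to bound the Hilbert--Schmidt norm, and via \refeq{A3.1}-type arguments (Theorem~\ref{t:A3.1} / Corollary~\ref{c:as}) it is enough to obtain an $L^\ga$ bound on $\| \widehat\Psi_{KL} - \Psi_{KL} \|_\cS$ that is summable along $N$ after the rescaling $N^{1/\ga - 1}$ built into Assumption~\ref{a:rates}; more directly, I would show $E \| \widehat\Psi_{KL} - \Psi_{KL} \|_\cS^\ga = o(N^{1-1/\ga})^{-\ga}$... in practice, show the bound times $N^{1-1/\ga}$ stays finite, then invoke Markov plus Borel--Cantelli along a subsequence and monotonicity, exactly as in the proof of Corollary~\ref{c:as}.

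The core is therefore a perturbation expansion of $\widehat\Psi_{KL} - \Psi_{KL}$ in terms of $\hat v_\ell - v_\ell$, $\hat u_k - u_k$, $\hat\la_\ell - \la_\ell$, and $\hat\sg_{\ell k} - \sg_{\ell k}$. Writing
\[
\frac{\hat\sg_{\ell k}}{\hat\la_\ell} \hat u_k \otimes \hat v_\ell
- \frac{\sg_{\ell k}}{\la_\ell} u_k \otimes v_\ell
\]
and expanding each factor around its population value, one gets a sum of terms each of which is a product of one ``error'' factor and bounded (or controlled) remaining factors, plus higher-order terms. The key ingredients for the factors are: (i) $E|\hat\la_\ell - \la_\ell|^\ga \le C N^{\ga(2/\ag-1)} L_\ga(N)$ from Theorem~\ref{t:M}, uniformly enough in $\ell \le L$ after inspecting the $\ell$-dependence (this is where the $\la_L^{-1}$ and $\sum_{j\le L}\ag_j^{-1}$ factors in \refeq{c-a}, \refeq{c-b} enter, since the constant $C_\ag(\ell)$ in the eigenfunction bound scales with the spectral gaps $\ag_\ell$, and dividing by $\hat\la_\ell \approx \la_\ell \ge \la_L$ on $\{\ell \le L\}$ produces the $\la_L^{-1}$); (ii) analogous bounds for $\| \hat v_\ell - v_\ell \|^\ga$ and $\| \hat u_k - u_k \|^\ga$, with gaps $\ag_\ell, \bg_k$; (iii) a bound for $E|\hat\sg_{\ell k} - \sg_{\ell k}|^\ga$, which I would obtain by writing $\hat\sg_{\ell k} = \frac1N\sum_i \lip X_i,\hat v_\ell\rip\lip Y_i,\hat u_k\rip$ and comparing with $\frac1N\sum_i \lip X_i, v_\ell\rip\lip Y_i,u_k\rip$ (whose deviation from $\sg_{\ell k}$ is a mean-zero average of terms with a finite $\ag/2$-moment, handled by Theorem~\ref{t:A3.1}/Marcinkiewicz--Zygmund in Hilbert space, since $E|\lip X,v_\ell\rip \lip Y,u_k\rip|^\ga < \infty$ for $\ga < \ag/2$ by Cauchy--Schwarz and regular variation of $X$) plus cross terms involving $\hat v_\ell - v_\ell$, $\hat u_k - u_k$. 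Summing the resulting bounds over $\ell \le L$ and $k \le K$ produces exactly the four expressions appearing in \refeq{c-a}--\refeq{c-D}, each multiplied by $N^{1/\ga - 1}$ (up to the slowly varying $L_\ga$, which is absorbed by choosing $\ga$ slightly smaller via Potter bounds, as in Corollary~\ref{c:M}). Assumption~\ref{a:rates} then forces the whole bound to stay bounded, giving the a.s.\ convergence.

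The main obstacle I anticipate is bookkeeping the $\ell$- and $k$-dependence of the constants uniformly across the truncation range, so that the sums $\sum_{\ell\le L}$ and $\sum_{k\le K}$ of the per-mode bounds reproduce precisely the quantities $\sum_j \ag_j^{-1}$, $(\sum_k\bg_k^{-2})^{1/2}$, etc.\ in Assumption~\ref{a:rates}. This requires a uniform (in the mode index, over the truncated range) version of the eigenfunction/eigenvalue perturbation bounds of Theorem~\ref{t:M}: the proof of that theorem is stated per fixed $j$, so I would need to re-examine it to extract explicit dependence on the spectral gap $\ag_j$ (resp.\ $\bg_k$) and on $\la_j$, and to check that the $O_P$-to-moment passage is uniform. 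A secondary subtlety is the division by $\hat\la_\ell$: one must restrict to the (asymptotically overwhelming) event where $\hat\la_\ell \ge \la_\ell/2 \ge \la_L/2$ for all $\ell \le L$ simultaneously, control the complementary event (again using Theorem~\ref{t:M} and a union bound over $\ell \le L$, which is why $L$ cannot grow too fast), and absorb the resulting $\la_L^{-1}$ and $\la_L^{-2}$ factors into \refeq{c-a}--\refeq{c-D}. Once these uniformities are in hand, the remaining estimates are routine triangle-inequality and Hölder manipulations, and the conclusion \refeq{con-Y} follows by the Borel--Cantelli / Theorem~\ref{t:A3.1} argument already used for Corollary~\ref{c:as}.
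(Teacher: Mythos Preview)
There is a genuine gap in your plan, at the step where you pass from moment bounds to almost sure convergence.  You propose to control $E\|\widehat\Psi_{KL}-\Psi_{KL}\|_\cS^\ga$ via Theorem~\ref{t:M} and then ``invoke Markov plus Borel--Cantelli along a subsequence and monotonicity, exactly as in the proof of Corollary~\ref{c:as}.''  But Corollary~\ref{c:as} is \emph{not} proved that way: it applies de Acosta's Theorem~\ref{t:A3.1}, which upgrades convergence in probability to almost sure convergence \emph{only for centered i.i.d.\ sums}.  The difference $\widehat\Psi_{KL}-\Psi_{KL}$ is not a centered i.i.d.\ average, so Theorem~\ref{t:A3.1} does not apply to it.  A direct Borel--Cantelli argument fails too: the moment rate $N^{-\ga(1-2/\ag)}$ from Theorem~\ref{t:M} is never summable for $\ga<\ag/2$ when $\ag<4$, and there is no monotonicity in $N$ of $\|\widehat\Psi_{K(N)L(N)}-\Psi_{K(N)L(N)}\|$ (both the truncation levels and the estimates move with $N$) to fill in between subsequence points.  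The uniformity-in-$j$ issue you flag is a symptom of the same problem: Theorem~\ref{t:M} is stated per fixed $j$ and does not give you a.s.\ bounds uniformly over $j\le L(N)\to\infty$.

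The paper's route is different and supplies the missing idea.  It first isolates the two quantities that \emph{are} centered i.i.d.\ averages, namely $\widehat C-C=N^{-1}\sum_i(X_i\otimes X_i-C)$ and $D_N-D=N^{-1}\sum_i(X_i\otimes Y_i-E[X\otimes Y])$, and applies de Acosta's theorem to each, obtaining $N^{1-1/\ga}\|\widehat C-C\|_\cS\convas 0$ (Corollary~\ref{c:as}) and $N^{1-1/\ga}\|D_N-D\|_\cS\convas 0$.  Everything else is then done \emph{pathwise}: writing $\widehat\Psi_{KL}=\hat\pi^K D_N\widehat C^{-1}\hat\pi^L$ and $\Psi_{KL}=\pi^K D\,C^{-1}\pi^L$, the difference is split into four operator-level pieces, each bounded by deterministic perturbation inequalities $|\hat\la_j-\la_j|\le\|\widehat C-C\|_\cL$ and $\|\hat v_j-v_j\|\le 2\sqrt2\,\ag_j^{-1}\|\widehat C-C\|_\cL$ (and analogously for the $Y$-side).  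This gives uniformity over $j\le L$ and $k\le K$ for free, with exactly the factors $\la_L^{-1}$, $\sum_{j\le L}\ag_j^{-1}$, $\sum_{k\le K}\bg_k^{-1}$, $(\sum_{k\le K}\bg_k^{-2})^{1/2}$ of Assumption~\ref{a:rates} emerging from the sums, each multiplied by $\|\widehat C-C\|_\cL$ or $\|D_N-D\|_\cL$ and hence $o(N^{1/\ga-1})$ almost surely.  Your coefficient-by-coefficient expansion can be reorganized to reach the same endpoint, but the essential step you are missing is to secure a.s.\ control at the level of $\widehat C-C$ and $D_N-D$ first and then argue deterministically, rather than trying to push moment bounds through the composite nonlinear map $\widehat\Psi_{KL}$.
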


\bigskip

\noindent{\bf Acknowledgements:}
The authors have been supported by NSF grant
``FRG: Collaborative Research: Extreme Value
Theory for Spatially Indexed Functional Data''
(1462067 CSU, 1462368 Michigan).
We thank Professor Jan Rosi{\'n}ski for
directing us to the work of \citetext{acosta:gine:1979},
and Mr. Ben Zheng for preparing the figures.

\bigskip

\bibliographystyle{oxford3}
\bibliography{qian}

\newpage

\centerline{\LARGE \it Online  material}

\bigskip \bigskip


\section{Proofs of the results stated in the paper} \label{s:p}

Throughout the proofs, we will use relatively well--known properties
of slowly varying functions, which we collect in
Lemma~\ref{l:L-properties} for ease of reference.
For the proofs and many more details, see e.g., \citetext{resnick:1987} and
\citetext{bingham:goldie:teugels:1987}.

\begin{lemma}\label{l:L-properties}
If $L$ is a slowly varying function, then:

(i) $L_1(u) = L(u^\rho),\ \rho>0$ and $L_2(u) = |L(u)|^a,\ a\in\mathbb R$ are slowly varying.

(ii) {\em (Potter bounds)} For all $\delta>0$, we have
$L(u) = o(u^{\delta}),$ as $u\to\infty$.

(iii) {\em (Karamata's Theorem)} For all $\rho> -1$ and $\eta>1$, as $u\to\infty$, we have
$$
\int_{0}^u x^{\rho} L(x) dx \sim \frac{u^{\rho+1} L(u)}{(\rho+1)}\quad \mbox{ and }\quad  \int_{u}^\infty x^{-\eta} L(x) dx \sim \frac{u^{-(\eta-1)} L(u)}{(\eta -1)},
$$
where $a(u) \sim b(u)$ means $a(u)/b(u) \to 1$, as $u\to\infty$.
\end{lemma}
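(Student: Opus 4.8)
The plan is to treat this as a compendium of classical facts and to reduce each part to the two foundational results of Karamata theory --- the \emph{uniform convergence theorem} (for a slowly varying $L$, the ratio $L(\lambda u)/L(u)$ tends to $1$ uniformly for $\lambda$ in compact subsets of $(0,\infty)$) and the \emph{Karamata representation} $L(u) = c(u)\exp\{\int_1^u \epsilon(t)/t\,dt\}$ with $c(u)\to c\in(0,\infty)$ and $\epsilon(t)\to 0$ --- which I would quote from \citetext{resnick:1987} or \citetext{bingham:goldie:teugels:1987} rather than reprove. Everything else is a short deduction; in particular $L$ may be assumed eventually of constant sign and nonzero, so that $|L(u)|$ is positive for large $u$.

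For (i), write $L_1(\lambda u)/L_1(u) = L((\lambda u)^\rho)/L(u^\rho) = L(\lambda^\rho u^\rho)/L(u^\rho)$; since $u^\rho\to\infty$ while $\lambda^\rho$ is a fixed positive constant, slow variation of $L$ gives the limit $1$. For $L_2$, $L_2(\lambda u)/L_2(u) = |L(\lambda u)/L(u)|^a \to 1^a = 1$. For (ii), the representation theorem yields, for $u$ beyond the point where $|\epsilon(t)|<\delta$, the estimate $L(u) \le C_\delta \exp\{\delta\log u\} = C_\delta u^\delta$, where $C_\delta<\infty$ absorbs the bounded factor $c(u)$ and the contribution of $\epsilon$ on the fixed initial interval; applying this with $\delta$ replaced by $\delta/2$ gives $L(u) = o(u^\delta)$. (The same argument run two-sidedly gives the full Potter inequalities used later in the paper.)

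For (iii), I would change variables. In the first integral, $\int_0^u x^\rho L(x)\,dx = u^{\rho+1}L(u)\int_0^1 s^\rho \frac{L(us)}{L(u)}\,ds$; by the uniform convergence theorem the integrand tends to $s^\rho$ (uniformly on each $[\epsilon,1]$), and by the Potter bound $s^\rho L(us)/L(u)\le C s^{\rho-\delta}$ on $(0,1]$ for large $u$, which is integrable once $\delta$ is small enough that $\rho-\delta>-1$; dominated convergence then gives $\int_0^1 s^\rho \frac{L(us)}{L(u)}\,ds \to \int_0^1 s^\rho\,ds = 1/(\rho+1)$. Symmetrically, $\int_u^\infty x^{-\eta}L(x)\,dx = u^{-(\eta-1)}L(u)\int_1^\infty t^{-\eta}\frac{L(ut)}{L(u)}\,dt$ with Potter domination $t^{-\eta}L(ut)/L(u)\le C t^{-\eta+\delta}$ on $[1,\infty)$, integrable for $\delta<\eta-1$, so the integral tends to $\int_1^\infty t^{-\eta}\,dt = 1/(\eta-1)$.

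The only genuine mathematical content sits in the uniform convergence and representation theorems; since the paper explicitly cites \citetext{resnick:1987} and \citetext{bingham:goldie:teugels:1987}, the intended proof is simply to invoke these and carry out the elementary deductions above. If a self-contained treatment were desired, the main obstacle would be the uniform convergence theorem itself, whose standard proof rests on a measurability / Baire-category (Steinhaus-type) argument that is orthogonal to the concerns of this paper; I would not reproduce it here.
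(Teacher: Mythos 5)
The paper itself offers no proof of this lemma: it is stated purely as a reference to \citetext{resnick:1987} and \citetext{bingham:goldie:teugels:1987}, so there is nothing to compare against line by line. Your reduction of all three parts to the uniform convergence theorem and the Karamata representation is the standard textbook route and is essentially correct: (i) is immediate, (ii) follows from the representation as you describe, and (iii) is the usual substitution-plus-dominated-convergence argument. One small technical point in the first half of (iii): the Potter bound $L(us)/L(u)\le C s^{-\delta}$ holds only when \emph{both} arguments exceed some threshold $u_0$, so for any fixed large $u$ it fails on $s\in(0,u_0/u]$ and does not by itself dominate the integrand on all of $(0,1]$. The standard fix is to split $\int_0^u=\int_0^{u_0}+\int_{u_0}^u$; the first piece is a fixed constant, negligible after division by $u^{\rho+1}L(u)$, which tends to infinity because it is regularly varying with positive index $\rho+1>0$, and the Potter domination then applies legitimately on the remaining range. (The second half of (iii) has no such issue since $t\ge 1$ there.) With that one-line repair your argument is complete and matches what the cited references do.
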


\subsection{Proofs of Proposition~\ref{p:HS-as} and of
the results of Section~\ref{s:hatC}}

\subsubsection*{Proof of Proposition~\ref{p:HS-as}}
Since $C$ is a covariance operator, it is nuclear
($\sum_{j\ge 1 } \la_j < \infty$), e.g. Theorem 11.2.2 of
\citetext{KRbook}, and so it is Hilbert--Schmidt
($\sum_{j\ge 1 } \la_j^2 < \infty$).

We now verify that $\widehat C$ is a.s. a Hilbert-Schmidt
operator. Observe that
\[
\|\widehat C\|_\mathcal{S}^2
= \iint \hat c^2(t,s) dt ds
= \iint \lbr \frac{1}{N} \sum_{n=1}^N X_n(t) X_n(s) \rbr^2 dt ds.
\]
It thus suffices to show that
\[
\iint \lbr X_n(t) X_n(s) \rbr^2 dtds
= \iint S_n^2(t,s) dtds < \infty \ \ \ a.s.,
\]
where
\[
S_n(t,s) = \sum_{j=1}^\infty \xi_{nj} v_j(t)
\sum_{\jp=1}^\infty \xi_{n\jp} v_{\jp}(s).
\]
Observe that
\[
\iint S_n^2(t,s)  dt ds =
\sum_{j, \jp =1}^\infty \sum_{i, \ip = 1}^\infty
\xi_{nj} \xi_{n\jp} \xi_{ni} \xi_{\ip}
\int v_j(t) v_i(t) dt \int v_{\jp}(s) v_{\ip} (s) ds.
\]
Therefore, by the orthonormality by the $v_j$,
\[
\iint S_n^2(t,s) dt ds
= \sum_{j, \jp =1}^\infty \xi_{nj} \xi_{n\jp} \xi_{nj} \xi_{n\jp}
= \lbr \sum_{j=1}^\infty \xi_{nj}^2 \rbr^2.
\]
Finally, observe that
\[
\sum_{j=1}^\infty \xi_{nj}^2 = \int_0^1 X_n^2(t)dt
= \| X\|^2 < \infty \ \ \ a.s.
\]
because $X$ is a random element of $L^2$.

\subsubsection*{Proof of Proposition \ref{p:KM-equiv}}
Set
\begin{equation} \label{e:Ga}
\Gamma(\cdot)
=\frac{\sigma_{S}(\cdot)}{\sigma_{S}(\mathbb{S}_H)}
\end{equation}
Recall that \refeq{Ga} specifies the relationship between the stable
spectral measure $\sigma_S$ and the angular measure $\Gamma$
of a regularly varying distribution appearing in Proposition~\ref{p:RV}.

 First we assume \refeq{KM-1} and \refeq{KM-2} hold. Take $m=1$ in
 \refeq{KM-1} and $A^\star=\mathbb{S}_H$ in \refeq{KM-2}, we then have
 for every $t>0$,
\[
\begin{split}
\frac{P(\lnorm Z \rnorm>tu, Z/\lnorm Z \rnorm  \in A )}{P(\lnorm Z \rnorm>u)} &=\frac{P(\lnorm Z \rnorm>tu, Z/\lnorm Z \rnorm  \in A )}{P(\lnorm Z \rnorm>tu, Z/\lnorm Z \rnorm  \in \mathbb{S}_H )} \frac{P\lp \lnorm Z \rnorm>tu  \rp}{P\lp \lnorm Z \rnorm>u \rp}\\
& \to \frac{\sigma_{S}(A)}{\sigma_{S}(\mathbb{S}_H)} t^{-p} \ \ (u \to \infty) \\
&=\Gamma(A)t^{-p},
\end{split}
\]
for any continuity set $A$ of $\sigma_{S}$ (equivalently, of
$\Gamma$). Thus condition (ii) in Proposition~\ref{p:RV} holds,
which implies that $Z$ is regularly varying with index $p$.

Next we assume that $Z$ is regularly varying with index $
p$, and show that \refeq{KM-1} and \refeq{KM-2} will hold.
Using condition (ii) in Proposition~\ref{p:RV}, we have
\[
\begin{split}
\frac{P\lp \lnorm Z \rnorm>u, Z/\lnorm Z \rnorm \in A \rp}{P\lp  \lnorm Z \rnorm>u, Z/\lnorm Z \rnorm \in A^\star \rp}
 &=\frac{P(\lnorm Z \rnorm>u, Z/\lnorm Z \rnorm  \in A )}{P\lp \lnorm Z \rnorm>u  \rp} \frac{P\lp \lnorm Z \rnorm>u  \rp}{P\lp  \lnorm Z \rnorm>u, Z/\lnorm Z \rnorm \in A^\star \rp}\\
 & \to \frac{\Gamma(A)}{\Gamma(A^\star)} =\frac{\sigma_{S}(A)}{\sigma_{S}(A^\star)},  \ \ (u \to \infty)
\end{split}
\]
for all continuity sets $A$, $A^\star \in \mathcal{B}(\mathbb{S}_H)$
with $\sigma_{S}(A^\star)>0$.
 Then, with the set $A_m$ defined by \refeq{Am-def},
\[
\begin{split}
\frac{P\lp \lnorm \pi_m(Z) \rnorm>tu \rp}{P\lp \lnorm Z \rnorm>u  \rp}&=\frac{P(t^{-1}u^{-1}Z \in  A_m )}{P(u^{-1}Z \in  A_1 )}\\
&=\frac{P(\lnorm Z \rnorm>u)}{P(u^{-1}Z \in  A_1 )} \frac{P(t^{-1}u^{-1}Z \in  A_m )}{P(\lnorm Z \rnorm>tu)} \frac{P\lp \lnorm Z \rnorm>tu \rp}{P\lp \lnorm Z \rnorm>u  \rp}\\
&\to \frac{\mu_Z(A_m)}{\mu_Z(A_1)}t^{-p} =:\frac{c_m}{c_1}t^{-p}, \ \ (u \to \infty)
\end{split}
\]
where the above convergence follows from \eqref{e:X-RV-mu} provided we can show that $A_m,\ m\ge 1$ are continuity sets of the measure $\mu_Z$.  We do that next.

By the definition of $A_m$ in \eqref{e:Am-def} and since $\pi_m$ is continuous and homogeneous,
we have
$$
 \partial A_m=\lbr z \in H: \lnorm \pi_m(z) \rnorm=1 \rbr\quad\mbox{ and }\quad \partial (rA_m)=r\partial A_m=\lbr z \in H: \lnorm \pi_m(z) \rnorm=r \rbr.
 $$
 Furthermore, we have that $r_1A_m \supset r_2 A_m$ for all $0<r_1<r_2$. This implies that $A_m=\cup_{r>1}\partial (rA_m)$, where the sets $\partial (rA_m)$ are all disjoint in $r$. By the homogeneity of $\mu_Z$, however,
 (recall \eqref{e:mu-prop}) it follows that $\mu_Z(\partial (rA_m))=r^{-p}\mu_Z(\partial A_m)$. In particular,
\[
\mu_Z(A_m) \ge \sum_i \mu_Z(\partial (r_iA_m))=\sum_i r_i^{-p}\mu_Z(\partial A_m),
\]
for any sequence $r_i>1$. If $\mu_Z(\partial A_m)>0$, then by taking $r_i$'s such that $\sum_i r_i^{-p}=\infty$, we obtain $\mu_Z(A_m)=\infty$,
which is not possible since $A_m$ is bounded away from zero. We have thus shown that $\mu_Z(\partial A_m)=0$, i.e., $A_m$ is a continuity set
of $\mu_Z$ for all $m\ge 1$.

To complete the proof of \refeq{KM-1}, it remains is to show that
$c_m=\mu_Z(A_m) \to 0$, as $m \to \infty$. Notice that $A_m \supset
A_{m+1}$ and thus $\lim_{m \to \infty}
\mu_Z(A_m)=\mu_Z(\cap_{m=1}^\infty A_m),$ since
$\mu(A_1)<\infty$.  It
is easy to see that $\cap_{m=1}^\infty A_m=\emptyset$. Indeed, for
each $z\in H$, we have $\|z\|^2 = \sum_{j=1}^\infty \lip z, e_j \rip^2
<\infty$ and therefore
\[
\|\pi_m(z)\|^2 = \sum_{j=m}^\infty \lip z, e_j \rip^2 \to 0,
\quad \mbox{ as }m\to\infty.
\]
If $z\in \cap_{m\ge 1}A_m$, then $\lnorm \pi_m(z) \rnorm>1$ for each
$m\ge 1$, which is impossible.

\subsubsection*{Proof of Proposition~\ref{p:Y-RV}}
Since $\lnorm Y \rnorm_{\cS} = \lnorm X \rnorm^2$ and
$P(\lnorm X \rnorm > u ) = u^{-\ag} L(u)$, we  conclude that
\[
P(\lnorm Y \rnorm_\mathcal{S} > u)
=u^{-\ag/2} L(u^{1/2}).
\]
Notice that $u\mapsto L(u^{1/2})$ is a slowly varying function.  Thus,
by Proposition \ref{p:RV} (iii), to establish the regular variation of
$Y$ it remains to show that there must exist a probability measure
$\Gamma_Y$ on ${\mathbb S}_\cS$ such that
\[
P \lp \lnorm Y \rnorm_\mathcal{S}^{-1} Y \in A |
\lnorm Y \rnorm_\mathcal{S}>u  \rp \to  \Gamma_Y(A), \ \ u \to \infty,
\]
for every $\Gamma_Y$-continuity set $A$.
The operator $Y$ takes values only in a small subset of
${\mathbb S}_\cS$, namely in
\begin{equation}\label{e:Y-support}
{\mathbb S}_\cS(1) =
\lbr \Psi \in {\mathbb S}_\cS :
\Psi = x \otimes x \ {\rm for \ some} \ x \in {\mathbb S}_H \rbr.
\end{equation}
The set  ${\mathbb S}_\cS(1)$ is closed in ${\mathbb S}_\cS$ and its
Borel subsets have the form $B\otimes B$, where $B$ is a Borel subset
of ${\mathbb S}_H$.  We know that
\[
\Gamma^{(u)}(B)
:=P \lp X/ \lnorm X \rnorm \in B | \lnorm X \rnorm >u  \rp \to
\Gamma(B), \ \ u \to \infty,
\]
for every $\Gamma$-continuity set $B \in \mathbb{S}_H$. Denote by
$\xi_u$ a random element of $H$ taking values in $\mathbb{S}_H$ whose
distribution is $\Gamma^{(u)}$. Then we have
\begin{equation}\label{e:asy-xi}
\xi_u \convd \xi, \ \ u \to \infty,
\end{equation}
where $\xi$ has distribution $\Gamma$. Furthermore, denote by $\eta_u$
a random element of $\cS$ taking values in ${\mathbb S}_\cS(1)$
whose distribution is
\[
P \lp \eta_u \in A  \rp=\frac{P \lp \lnorm Y \rnorm_\mathcal{S}^{-1}
 Y \in A,  \lnorm Y \rnorm_\mathcal{S}>u  \rp}
{P \lp  \lnorm Y \rnorm_\mathcal{S}>u  \rp}, \ \ A \in {\mathbb S}_\cS(1).
\]
We want to identify a random element $\eta$ such that
\begin{equation}\label{e:asy-eta}
\eta_u \convd \eta, \ \ u \to \infty,
\end{equation}
whose distribution will be the desired measure $\Gamma_Y$.

We first verify that
\begin{equation}\label{e:eta-xi}
\eta_u \stackrel{d}{=} \xi_{u^{1/2}} \otimes \xi_{u^{1/2}}.
\end{equation}
Relation \refeq{eta-xi} is equivalent to
\begin{equation} \label{e:eta-xi-dist}
\frac{P \lp \lnorm Y \rnorm_\mathcal{S}^{-1} Y \in A,
\lnorm Y \rnorm_\mathcal{S}>u  \rp}{P \lp  \lnorm Y \rnorm_\mathcal{S}>u  \rp}=P \lp \xi_{u^{1/2}} \otimes \xi_{u^{1/2}} \in A  \rp,
\ \ \forall A \in {\mathbb S}_\cS(1).
\end{equation}
Set $A=B\otimes B$.
Since $\lnorm Y \rnorm_\mathcal{S}=\lnorm X \rnorm^2$, the left--hand
side of \refeq{eta-xi-dist} is
\[
\begin{split}
\frac{P \lp \lnorm Y \rnorm_\mathcal{S}^{-1} Y \in A,  \lnorm Y \rnorm_\mathcal{S}>u  \rp}{P \lp  \lnorm Y \rnorm_\mathcal{S}>u  \rp}
&=\frac{P \lp \lp \lnorm X \rnorm^{-1} X \rp \otimes \lp \lnorm X \rnorm^{-1} X \rp \in B \otimes B,  \lnorm X \rnorm>u^{1/2}  \rp}{P \lp  \lnorm X \rnorm>u^{1/2}  \rp}\\
&=\frac{P \lp \lnorm X \rnorm^{-1} X \in B, \lnorm X \rnorm >u^{1/2}  \rp}{P \lp  \lnorm X \rnorm>u^{1/2}  \rp}\\
&=\Gamma^{(u^{1/2})}\lp B  \rp,
\end{split}
\]
while the right--hand side of \refeq{eta-xi-dist} is
\begin{equation} \label{e:AB}
P \lp \xi_{u^{1/2}} \otimes \xi_{u^{1/2}} \in A  \rp
=P \lp \xi_{u^{1/2}} \in B, \xi_{u^{1/2}} \in B  \rp
=P \lp \xi_{u^{1/2}} \in B \rp=\Gamma^{(u^{1/2})}\lp B  \rp.
\end{equation}
Therefore,  \refeq{eta-xi} holds. It remains to  show that
\[
\eta_u \stackrel{d}{=} \xi_{u^{1/2}} \otimes \xi_{u^{1/2}}
\convd \xi \otimes \xi =:\eta, \ \ u \to \infty.
\]
The above relation  holds because by \refeq{AB}
and  \refeq{asy-xi},
\[
P \lp \xi_{u^{1/2}} \otimes \xi_{u^{1/2}} \in A  \rp
=\Gamma^{(u^{1/2})}\lp B  \rp
 \to \Gamma(B)
=P \lp \xi \in B  \rp
=P \lp \eta \in A  \rp,
\]
provided  $B$ is a continuity set of $\Gamma$. Using the relation
$\lnorm
y\otimes z \rnorm_\cS = \lnorm y \rnorm \lnorm z \rnorm$,
 it is easy to check that $x_n\otimes x_n \to x \otimes x$ in $\cS$
if and only if $x_n\to x$ in $H$. Hence,
$\partial A = \partial B \otimes \partial B$, so the continuity sets
of the distribution of $\eta$ have the form $ B \otimes  B$ with
$\Gamma(\partial B) = 0$.

\subsubsection*{Proof of Theorem~\ref{t:asym-prodX}}
By Proposition~\ref{p:Y-RV}, the operators $X_i\otimes X_i$ are
iid regularly varying elements of $\cS$, whose index of regular
variation is $\ag/2 \in (1,2)$. In order to use Theorem~\ref{t:KM},
we first verify that
$\mu_{X\otimes X}(A_m) > 0$, cf. Proposition~\ref{p:KM-equiv}.
This is where Assumption \ref{a:KM} comes into play.
An orthonormal basis of $L(X\otimes X)$ is
$\lbr v_i \otimes v_j, i, j \ge 1\rbr$, where the $v_j$ are the FPCs of
$X$. Set
\[
A_{n, m} =
\lbr \Psi\in \cS: \ \lnorm \sum_{i=n}^\infty \sum_{j=m}^\infty
\lip \Psi, v_i\otimes v_j \rip_{\cS} v_i\otimes v_j \rnorm_\cS > 1 \rbr.
\]
We must thus verify that
$\mu_{X\otimes X}(A_{n,m}) > 0$. By \refeq{X-RV-mu},
\[
\mu_{X\otimes X}(A_{n,m})
= \lim_{u\to \infty}
\frac{P(X\otimes X \in u A_{n,m})}{P(\|X \otimes X \|_\cS > u)}.
\]
Clearly
\[
P(\|X \otimes X \|_\cS > u) = P(\|X \|^2 > u)
= P\lp \sum_{j=1}^\infty \xi_j^2 > u\rp,
\]
which is the denominator of $Q_{nm}$ in Assumption~\ref{a:KM}.
Turning to the numerator, observe that $X\otimes X \in u A_{nm}$
iff
\[
\lnorm \sum_{i=n}^\infty \sum_{j=m}^\infty
\lip X\otimes X, v_i\otimes v_j \rip_{\cS} v_i\otimes v_j  \rnorm_{\cS}
> u.
\]
Direct verification, which uses the definition of the inner product in $\cS$
and the orthonormality of the $v_j$,  shows that
$
\lip X\otimes X, v_i\otimes v_j \rip_{\cS} = \xi_i\xi_j.
$ It follows that  $X\otimes X \in u A_{nm}$ iff
\[
\lnorm \sum_{i=n}^\infty \sum_{j=m}^\infty
\xi_i \xi_j v_i \otimes v_j \rnorm_\cS^2 > u^2.
\]
Using the definition of the Hilbert--Schmidt norm and the orthogonality
of the $v_j$ again, we see that the above inequality is equivalent to
$
\sum_{i=n}^\infty \xi_i^2 \sum_{j=m}^\infty \xi_j^2 > u^2,
$
so $P(X\otimes X \in u A_{nm})$ is equal to the numerator of $Q_{nm}$.

It remains to show that the normalizing sequences can be chosen
as specified in \refeq{kN}.
It is easy to check that $k_N \to \infty$ and $\frac{k_N}{k_{N+1}} \to 1$.  We will show that
\begin{equation}\label{e:kN-verify}
Nk_N^{-2}E \lp \lnorm X \rnorm^4I_{\lbr \lnorm X \rnorm^2\le k_N \rbr}
\rp \to 1,
\end{equation}
which in view of \eqref{e:bN} would yield \eqref{e:asym-prodX}, where the spectral measure of the limit $S$ is normalized so that
$\lambda_p\sigma_{S}(\mathbb{S}_\mathcal{S}) = 1$ with $\lambda_p$ in \eqref{e:lambda_p}.

Observe that by the Tonelli-Fubini Theorem, we have
\begin{align*}
E \left[   \lnorm X \rnorm^4I_{\lbr \lnorm X \rnorm^2\le k_N \rbr} \right] &= E \left[ \int_0^\infty I_{\{ x< \|X\|^4 \le k_N^2 \} } dx\right]\\
& = \int_0^{k_N^2}  \left[ P ( \| X\|^4 >x) - P(\|X\|^2 > k_N) \right] dx \\
& = \int_0^{k_N^2} x^{-\alpha/4} L(x^{1/4}) dx - k_N^2 k_N ^{-\alpha/2} L(k_N^{1/2}),
\end{align*}
where we used the fact that $P(\|X\|>x) = x^{-\alpha} L(x)$.  Now, by applying Karamata's theorem (Lemma \ref{l:L-properties} (iii))
to the integral in the last expression, we obtain
\begin{align}\label{e:Karamata-once}
E \left[   \lnorm X \rnorm^4I_{\lbr \lnorm X \rnorm^2\le k_N \rbr} \right]
&\sim \frac{1}{(1-\alpha/4)} k_N^{2-\alpha/2} L(k_N^{1/2}) -k_N^ {2-\alpha/2} L(k_N^{1/2}) \nonumber \\
&= \lp \frac{4}{(4-\alpha)} - 1\rp k_N^{2-\alpha/2} L(k_N^{1/2}) \nonumber\\
& = \frac{\alpha}{(4-\alpha)}k_N^{2-\alpha/2} L(k_N^{1/2}),
\end{align}
as $k_N\to\infty$, where $c_N \sim d_N$ means that $c_N/d_N \to 1$.

In view of \eqref{e:covp_x} by taking $A = \{x\, :\, \|x\|> 1\}$, we obtain
\begin{equation}\label{e:aN_and_L}
N P(\|X\| >a_N) = N a_N^{-\alpha} L(a_N) \to 1,
\end{equation}
since $\mu$ is normalized so that $\mu(A)=1$ and
$\mu(\partial A) = 0$ by Proposition 2.2.2 of \citetext{meinguet:2010}.
Thus, multiplying \eqref{e:Karamata-once} by $N k_N^{-2}$ and recalling
\eqref{e:kN}, we obtain
\[
Nk_N^{-2}E \lp \lnorm X \rnorm^4I_{\lbr \lnorm X \rnorm^2\le k_N \rbr} \rp \sim c_\alpha^\alpha k_N^{-\alpha/2} L( k_N^{1/2})  = a_N^{-\alpha} L( c_\alpha a_N),
\]
where $c_\alpha = (\alpha/(4-\alpha))^{1/\alpha}$.  Since $L$ is a
slowly varying function, we have $L(c_\alpha a_N) \sim L(a_N)$ as
$a_N\to \infty$, and therefore by \eqref{e:aN_and_L}, we obtain
\eqref{e:kN-verify}.  This completes the proof.

\subsubsection*{Proof of Theorem~\ref{t:asym-hatC-C}}
Observe that by \refeq{hatC-C},
\begin{equation} \label{e:S1}
N k_N^{-1} \lp \widehat C - C \rp
=  k_N^{-1} \lp \sum_{n=1}^N X_n \otimes X_n - \psi_N \rp
+  k_N^{-1} N E \lb (X\otimes X) I_{\lbr \| X \|^2 > k_N \rbr} \rb,
\end{equation}
with $k_N$ and $\psi_N$ as in Theorem~\ref{t:asym-prodX}.  The first
term converges to $S$, so we must verify the existence of the second
term, show that it converges, and describe its limit.  The issue is
subtle because $k_N\to \infty$ implies that
$k_N^{-1}N \lb (X\otimes X) I_{\lbr
  \| X \|^2 \ge k_N \rbr} \rb \to 0$ with probability 1, yet the
expected value does not tend to zero even in the case of scalar
observations, see Theorem 2.2 of \citetext{davis:resnick:1986}.  It
is convenient to approach the problem  in a slightly more general setting.

Suppose $Y$ is a regularly varying element of a separable Hilbert
space whose index of regular variation is $p, \ p \in (1,2)$.
In our application, $Y=X\otimes X$,  the Hilbert space is $\cS$
and $p=\ag/2$. Denote by $\mu_Y$ the exponent measure of $Y$
and by $u_N$ a regularly varying sequence such that
$N P(\lnorm Y \rnorm  > u_N) \to 1$, so that
\begin{equation}\label{e:mu_N,Y-def}
\mu_{N, Y}(A) := \frac{P(Y \in u_N A)}{P(\|Y\| >u_N)} \to \mu_Y(A),
\end{equation}
with the usual restrictions on the set $A$, cf. Proposition~\ref{p:RV}.
Set
\[
Y_N = u_N^{-1} N Y I_{\lbr \|Y \| > u_N \rbr}
\]
and observe that $E [ Y_N ] $ exists in the sense of Bochner.  Indeed, by
\refeq{Xtail} and the Potter bounds (Lemma \ref{l:L-properties}), we have
\[
  P(\|Y\|>u) = u^{-p}L(u) = o(u^{-p+\delta}),\ \quad\mbox{ as }u\to\infty,
\]
for an arbitrarily small $\delta>0$.  Since $p\in (1,2)$, by taking $p-\delta>1$, we obtain
$E [\|Y \|] = \int_0^\infty P(\|Y\|>y) dy < \infty$ and the expectation of $Y$ and hence $Y_N$ is well-defined.

Now set $M_N = E [Y_N]$. We want to identify $M \in H$ such that
$\| M_N - M\| \to 0$.  We will show that the above convergence  holds with
\begin{equation}\label{e:M-def}
M = \int_{ \displaystyle {\mathbb B}^c } y \mu_Y(dy),
\end{equation}
where ${\mathbb B}= \lbr y: \| y \| \le 1 \rbr$.  Recall that $Y$ is regularly varying and by \eqref{e:disintegration} its
exponent and angular measures are related as follows
\begin{equation} \label{e:mu-Ga}
\mu_Y(dy) = p r^{-p-1} dr \Gamma_Y(d\theta),
\end{equation}
where $r:= \|y\|$ and $\theta:=y/\|y\|$ are polar coordinates in $H$.
Thus, in polar coordinates, we obtain
\begin{align}\label{e:M}
 \int_{ \displaystyle {\mathbb B}^c } \|y\| \mu_Y(dy)
&= \int_1^\infty \int_{\displaystyle {\mathbb S}}
 r \|\theta \|   \Gamma_Y(d\theta)  p r^{-p -1} dr \\
& = \lp  p \int_1^\infty r^{-p} dr \rp \int_{\displaystyle {\mathbb S}}
\|\theta \| \Gamma_Y(d\theta)\nn \\
&= \frac{p}{p-1}. \nn
\end{align}
This shows that the Bochner integral in \eqref{e:M-def} is well defined and in fact equals
\[
M = \frac{p}{p-1} \int_{\displaystyle
\mathbb{S}} \theta \Gamma_Y(d\theta).
\]
In view of Remark \ref{rem:Gamma_Y}, by taking $Y = X\otimes X$ and
$p=\alpha/2$, we then obtain
\[
M = \frac{\alpha}{\alpha-2} \int_{\displaystyle \mathbb{S}_H}
\lp \theta\otimes \theta \rp \Gamma_X(d\theta),
\]
which is the expression for the offset in \refeq{limC}.

Observe that by the definition \eqref{e:mu_N,Y-def} of $\mu_{N,Y}$, since $N P( \|Y\|>u_N)\to 1$,
for any Bochner integrable mapping of the Hilbert space into itself, or to the real line,
\begin{equation} \label{e:int-f}
N E [ f(u_N^{-1} Y) ] \sim   \int f(y) \mu_{N, Y}(dy).
\end{equation}
Therefore,
\[
M_N = N E \lb u_N^{-1} Y  I_{\displaystyle {\mathbb B}^c}
 (u_N^{-1}Y)\rb \sim \int_{ \displaystyle {\mathbb B}^c } y \mu_{N, Y} (dy).
\]
Observe that $\mu_{N, Y} ( \displaystyle {\mathbb B}^c) = 1$,
and by  \refeq{mu-Ga},
\[
\mu_Y({\mathbb B}^c) =
\int_1^\infty \int_{\displaystyle {\mathbb S}} p r^{-p-1} dr
\Gamma_Y(d\theta) = \sigma_Y(\displaystyle {\mathbb S}) = 1.
\]
Thus $\mu_{N, Y}$ and $\mu_Y$ are probability measures on
$\displaystyle {\mathbb B}^c$, and we want to show that
\[
\int_{ \displaystyle {\mathbb B}^c } y \mu_{N, Y} (dy)
\to \int_{ \displaystyle {\mathbb B}^c } y \mu_{Y} (dy).
\]
Since $\mu_{N, Y}$ converges weakly to $\mu_Y$, it suffices to verify
that
\begin{equation} \label{e:ui}
\sup_{N\ge 1} \int_{ \displaystyle {\mathbb B}^c }
\lnorm y \rnorm^{1+\dg} \mu_{N, Y} (dy) < \infty,
\end{equation}
for some $\dg> 0$ (this implies strong uniform integrability).
Observe that by \refeq{int-f},
\begin{align} \label{e:crystal-ball}
\int_{\displaystyle {\mathbb B}^c}
\lnorm y \rnorm^{1+\dg} \mu_{N, Y} (dy)
&= N E \lb \lnorm u_N^{-1} Y \rnorm^{1+\dg} I_{ \displaystyle {\mathbb B}^c }( u_N^{-1} Y) \rb \nonumber \\
& = N u_N^{-1-\dg} E_N(\dg),
\end{align}
where
\[
 E_N(\dg) = E \lb \lnorm Y \rnorm^{1+\dg}
I_{\{ \lnorm Y\rnorm > u_N\}} \rb.
\]

By the Tonelli--Fubini theorem, we have
\begin{align*}
E_N(\dg) &= E \lp \int_{u_N^{1+\delta}} I_{\{ \|Y\|^{1+\delta} > x\}} dx \rp  = \int_{u_N^{1+\delta}}^\infty P\lp \|Y\|^{1+\delta} > x \rp dx \\
& = \int_{u_N^{1+\delta}}^\infty  x^{-p/(1+\delta)} L(x^{1/(1+\delta)}) dx.
\end{align*}
Now, by picking $\delta>0$ such that $\eta := p/(1+\delta)>1$ and applying the Karamata Theorem (Lemma \ref{l:L-properties}(iii)), for the
right-hand side of \eqref{e:crystal-ball}, we obtain
\begin{align*}
N u_N^{-1-\dg} E_N(\dg) &\sim N u_N^{-1-\dg} \frac{1}{\eta-1} \lp u_N^{1/(1+\delta)} \rp^{1-p/(1+\delta)} L(u_N)\\
& \sim \frac{1}{\eta-1} N u_N^{-p} L(u_N) = \frac{1}{\eta-1} N P(\|Y\|>u_N) \to  \frac{1}{\eta-1},
\end{align*}
where the last convergence follows from the definition of the sequence
$u_N$.  This shows that the supremum in \eqref{e:ui} is finite, which
completes the proof.

\subsection{Proofs of the results of Section~\ref{s:KR}}

\subsubsection*{Proof of Theorem~\ref{t:KR1}}
The results of this section require
Assumptions \ref{a:ass-conv} and \ref{a:la}.

Before stating Theorem~\ref{t:KR1}, we referred to
Lemma~\ref{l:1} which ensures that the the series
\begin{align*}
T_{j, N}
&= \sum_{k\neq j} (\la_j - \la_k)^{-1}
\lip Z_N, v_j \otimes v_k \rip v_k;\\
T_j
&= \sum_{k\neq j} (\la_j - \la_k)^{-1}
\lip Z, v_j \otimes v_k \rip v_k.
\end{align*}
converge a.s. in $L^2$. These series play a fundamental role in our
arguments.

\begin{lemma} \label{l:1} Suppose $\Psi\in \cS$. For $1 \le j \le p$, set
\[
g_j(\Psi) = \sum_{k\neq j} (\la_j - \la_k)^{-1}
\lip \Psi, v_j \otimes v_k \rip v_k.
\]
Then, the series defining $g_j(\Psi)$ converges in $L^2$.
\end{lemma}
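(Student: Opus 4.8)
The plan is to reduce the assertion to the elementary Hilbert-space fact that, for an orthonormal sequence $\lbr v_k \rbr$ in $L^2$, a series $\sum_k c_k v_k$ converges in $L^2$ if and only if $\sum_k c_k^2 < \infty$ (and then the partial sums are automatically Cauchy). The vectors $v_k$ appearing in $g_j(\Psi)$ are the orthonormal eigenfunctions of $C$, so it suffices to show
\[
\sum_{k \neq j} \lp \la_j - \la_k \rp^{-2} \lip \Psi, v_j \otimes v_k \rip_\cS^2 < \infty .
\]

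First I would rewrite the coefficients of the expansion. Using the definition of the inner product on $\cS$ together with $(v_j \otimes v_k)(x) = \lip v_j, x \rip v_k$, the same short computation as in the proof of Theorem~\ref{t:asym-prodX} (the one establishing $\lip X \otimes X, v_i \otimes v_j \rip_\cS = \xi_i \xi_j$) gives $\lip \Psi, v_j \otimes v_k \rip_\cS = \lip \Psi(v_j), v_k \rip$. Since $\lbr v_k \rbr$ is orthonormal, Bessel's inequality and the bound $\lnorm \Psi(v_j) \rnorm^2 \le \lnorm \Psi \rnorm_\cS^2$ (obtained by computing $\lnorm \Psi \rnorm_\cS$ with respect to an orthonormal basis whose first element is $v_j$) give
\[
\sum_{k \neq j} \lip \Psi, v_j \otimes v_k \rip_\cS^2 \le \sum_{k \ge 1} \lip \Psi(v_j), v_k \rip^2 \le \lnorm \Psi(v_j) \rnorm^2 \le \lnorm \Psi \rnorm_\cS^2 < \infty .
\]

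Second I would bound the scalar weights $(\la_j - \la_k)^{-1}$ uniformly in $k \neq j$; this is the only point where Assumption~\ref{a:la} is used. For $k > p$ we have $\la_j \ge \la_p > \la_{p+1} \ge \la_k$, hence $\la_j - \la_k \ge \la_p - \la_{p+1} > 0$; for the finitely many indices $k \le p$ with $k \neq j$, the strict ordering $\la_1 > \cdots > \la_p$ forces $\la_j - \la_k \neq 0$. Consequently
\[
d_j := \sup_{k \neq j} | \la_j - \la_k |^{-1}
= \max \lbr (\la_p - \la_{p+1})^{-1}, \ \max_{k \le p,\, k \neq j} | \la_j - \la_k |^{-1} \rbr < \infty ,
\]
and multiplying the previous display by $d_j^2$ yields $\sum_{k \neq j} (\la_j - \la_k)^{-2} \lip \Psi, v_j \otimes v_k \rip_\cS^2 \le d_j^2 \lnorm \Psi \rnorm_\cS^2 < \infty$, which is the required summability; the series defining $g_j(\Psi)$ therefore converges in $L^2$.

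The argument is routine manipulation of orthonormal expansions, so there is no serious obstacle. The only point that needs care is the uniform boundedness of $(\la_j - \la_k)^{-1}$ over all $k \neq j$: it would fail without a spectral-gap hypothesis, and it is exactly here that $\la_p > \la_{p+1}$ — which keeps the gaps $\la_j - \la_k$ bounded away from $0$ as $k \to \infty$ — together with the strict separation of $\la_1, \dots, \la_p$ is indispensable.
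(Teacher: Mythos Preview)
Your proof is correct and follows essentially the same route as the paper: reduce to square-summability of the coefficients via orthonormality of the $v_k$, bound $\sum_{k\neq j}\lip \Psi, v_j\otimes v_k\rip_\cS^2$ by $\lnorm\Psi\rnorm_\cS^2$, and bound $(\la_j-\la_k)^{-2}$ uniformly using Assumption~\ref{a:la}. The only cosmetic differences are that the paper invokes Parseval for the orthonormal system $\lbr v_i\otimes v_k\rbr$ in $\cS$ directly (rather than rewriting $\lip\Psi,v_j\otimes v_k\rip_\cS=\lip\Psi(v_j),v_k\rip$ and applying Bessel), and it uses the sharper constant $\ag_j=\min\lbr\la_j-\la_{j+1},\la_{j-1}-\la_j\rbr$ in place of your $d_j$; the latter matters only because $\ag_j$ reappears later when the bound $\lnorm g_j(\Psi)\rnorm\le\ag_j^{-1}\lnorm\Psi\rnorm_\cS$ is used to show continuity of $g_j$.
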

\begin{proof}
Since the $v_k$ are orthonormal, it is enough to check that
\[
\sum_{k\neq j} (\la_j - \la_k)^{-2}
\lip \Psi, v_j \otimes v_k \rip^2 < \infty.
\]
Since the system $\lbr v_j\otimes v_k, \ j, k \ge 1\rbr$ forms an
orthonormal basis in $\cS$
\[
\sum_{j, k \ge 1} \lip \Psi, v_j \otimes v_k \rip^2
= \lnorm \Psi \rnorm_\cS^2 < \infty.
\]
Therefore,
\[
\sum_{k\neq j} (\la_j - \la_k)^{-2}
\lip \Psi, v_j \otimes v_k \rip^2 \leq \ag_j^{-2} \lnorm \Psi \rnorm_\cS^2,
\]
withe $\ag_j$ defined in \refeq{ag-j}.
\end{proof}

\rightline{\QED}

We will use the following lemma, which is analogous to Lemma 1 in
\citetext{kokoszka:reimherr:2013}, whose fully analogous
proof, based on algebraic manipulations,  is omitted.

\begin{lemma}\label{l:KR1} For any $j \ge 1$,
\[
\lip \hat v_j - v_j, v_j \rip = -\frac{1}{2} \lnorm \hat v_j - v_j \rnorm^2.
\]
For any $j, k \ge 1$ such that $j \neq k$ and $\hat\la_j \neq \la_k$,
\[
\lip \hat v_j - v_j, v_k \rip = r_N^{-1} (\hat\la_j - \la_k)^{-1}
\lip Z_N, \hat v_j\otimes v_k \rip.
\]
\end{lemma}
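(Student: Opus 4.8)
The lemma comprises two algebraic identities, and the plan is to establish each by a direct computation from the defining relations $Cv_j=\la_j v_j$, $\widehat C\hat v_j=\hat\la_j\hat v_j$, the unit normalizations $\lnorm v_j\rnorm=\lnorm\hat v_j\rnorm=1$, and the substitution $\widehat C-C=r_N^{-1}Z_N$. No probabilistic input is needed; the statement is the Hilbert--space analogue of the classical first--order perturbation identity for eigenvectors, and it runs parallel to Lemma~1 of \citetext{kokoszka:reimherr:2013}.

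For the first identity I would expand the squared norm using $\lnorm\hat v_j\rnorm=\lnorm v_j\rnorm=1$:
\[
\lnorm \hat v_j - v_j \rnorm^2 = \lnorm \hat v_j \rnorm^2 - 2\lip \hat v_j, v_j\rip + \lnorm v_j\rnorm^2 = 2 - 2\lip\hat v_j, v_j\rip,
\]
and observe that $\lip\hat v_j-v_j, v_j\rip=\lip\hat v_j,v_j\rip-1$ equals one half of the negative of the right--hand side, which is the claim. Only the unit--norm normalization is used; the sign convention $\mathrm{sign}\lip\hat v_j,v_j\rip=1$ plays no role here.

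For the second identity I would exploit self--adjointness of $C$ (it is a covariance operator; by Assumption~\ref{a:ass-conv}, $\widehat C$ is a.s.\ symmetric as well) to move $C$ across the inner product. Writing $C=\widehat C-(\widehat C-C)$ and pairing with $v_k$,
\[
\la_k \lip \hat v_j, v_k\rip = \lip \hat v_j, C v_k\rip = \lip C\hat v_j, v_k\rip = \hat\la_j\lip\hat v_j,v_k\rip - \lip(\widehat C-C)\hat v_j, v_k\rip,
\]
so $(\hat\la_j-\la_k)\lip\hat v_j,v_k\rip=\lip(\widehat C-C)\hat v_j,v_k\rip$. Since $j\neq k$ forces $\lip v_j,v_k\rip=0$, one has $\lip\hat v_j,v_k\rip=\lip\hat v_j-v_j,v_k\rip$; dividing by $\hat\la_j-\la_k\neq0$ and substituting $\widehat C-C=r_N^{-1}Z_N$ yields
\[
\lip\hat v_j-v_j,v_k\rip = r_N^{-1}(\hat\la_j-\la_k)^{-1}\lip Z_N\hat v_j, v_k\rip.
\]
To finish I would convert the $H$--inner product $\lip Z_N\hat v_j,v_k\rip$ into the $\cS$--inner product via the duality $\lip \Psi y, z\rip=\lip\Psi, y\otimes z\rip_\cS$, valid for $\Psi\in\cS$ and $y,z\in H$ --- this follows at once from $(y\otimes z)(x)=\lip y,x\rip z$ and the series definition of $\lip\cdot,\cdot\rip_\cS$ --- applied with $\Psi=Z_N$, $y=\hat v_j$, $z=v_k$.

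There is no genuine analytic obstacle. The only points that require care are keeping track of self--adjointness so that operators may legitimately be moved across the $H$--inner product, carrying along the normalizing factor $r_N^{-1}$ correctly when passing from $\widehat C-C$ to $Z_N$, and the bookkeeping that rewrites $\lip Z_N\hat v_j,v_k\rip$ as $\lip Z_N,\hat v_j\otimes v_k\rip_\cS$ --- the form in which the limit theorems of the paper are stated, so that Lemma~\ref{l:KR1} feeds directly into the proof of Theorem~\ref{t:KR1}.
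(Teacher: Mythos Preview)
Your proof is correct and proceeds exactly by the elementary algebraic manipulations the paper alludes to; the paper itself omits the proof, noting only that it is fully analogous to that of Lemma~1 in \citetext{kokoszka:reimherr:2013}. The only thing worth double--checking is the tensor convention, and with the paper's definition $(y\otimes z)(x)=\lip y,x\rip z$ your identity $\lip Z_N\hat v_j,v_k\rip=\lip Z_N,\hat v_j\otimes v_k\rip_\cS$ is indeed the right one.
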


By Assumption \ref{a:ass-conv}, $\| \widehat C - C \|_{\cS} =
O_P( r_N^{-1})$. Using the well--known inequalities
\[
|\hat \la_j - \la_j | \leq \| \widehat C - C \|_{\cS}, \ \ \
\lnorm \hat v_j - v_j \rnorm
\le \frac{2\sqrt{2}}{\ag_j} \| \widehat C - C \|_{\cS},
\]
(see e.g. Lemmas 2.2 and 2.3 in \citetext{HKbook}),
we obtain the following Lemma.

\begin{lemma} \label{l:KR2} For $1 \le j \le p$,
\[
\| \widehat C - C \|_{\cS} = O_P( r_N^{-1}), \ \ \
|\hat \la_j - \la_j | = O_P( r_N^{-1}), \ \ \
\lnorm \hat v_j - v_j \rnorm = O_P( r_N^{-1}).
\]
\end{lemma}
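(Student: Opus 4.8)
The plan is to obtain all three stochastic rates from the single distributional statement in Assumption~\ref{a:ass-conv}, combined with the deterministic perturbation inequalities for the eigenvalues and eigenfunctions of a self--adjoint compact operator; in fact the argument is already sketched in the lines preceding the lemma, and I would simply make the two ingredients explicit.

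First I would record the rate for $\widehat C - C$. Assumption~\ref{a:ass-conv} gives $Z_N = r_N(\widehat C - C) \convd Z$ in $\cS$, and since the map $\Psi \mapsto \lnorm \Psi \rnorm_\cS$ is continuous, the continuous mapping theorem yields $r_N \lnorm \widehat C - C \rnorm_\cS \convd \lnorm Z \rnorm_\cS$, where $\lnorm Z \rnorm_\cS$ is an a.s.\ finite random variable. A sequence converging in distribution to an a.s.\ finite limit is stochastically bounded, so $\lnorm \widehat C - C \rnorm_\cS = O_P(r_N^{-1})$, which is the first assertion. Along the way one notes that $\hat v_j$ and $\hat\la_j$ are a.s.\ well defined for $1 \le j \le p$ and $N$ large: by Assumption~\ref{a:ass-conv}, $\widehat C$ is a.s.\ symmetric, nonnegative--definite and Hilbert--Schmidt, hence a.s.\ compact with discrete spectrum, and its leading $p$ eigenvalues converge in probability to the distinct values $\la_1 > \cdots > \la_p$ of Assumption~\ref{a:la}.

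Next I would apply the classical perturbation bounds. Under Assumption~\ref{a:la} the gaps $\ag_j$ from \refeq{ag-j} are strictly positive for $1 \le j \le p$, and the standard inequalities (e.g.\ Lemmas~2.2 and~2.3 of \citetext{HKbook}) give
\[
|\hat\la_j - \la_j| \le \lnorm \widehat C - C \rnorm_\cS,
\qquad
\lnorm \hat v_j - v_j \rnorm \le \frac{2\sqrt{2}}{\ag_j}\, \lnorm \widehat C - C \rnorm_\cS,
\]
the second under the normalization ${\rm sign}\lip \hat v_j, v_j \rip = 1$ already adopted in the paper. Multiplying through by $r_N$ and invoking the first assertion, both right--hand sides are $O_P(1)$, so $|\hat\la_j - \la_j| = O_P(r_N^{-1})$ and $\lnorm \hat v_j - v_j \rnorm = O_P(r_N^{-1})$.

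I do not expect any genuine obstacle: the content of the lemma is precisely that $O_P$ rates transfer from $\widehat C - C$ to the eigenvalues and eigenfunctions through deterministic bounds, once $r_N(\widehat C - C)$ is tight (indeed convergent) in $\cS$. The only mildly delicate bookkeeping is the sign ambiguity of $\hat v_j$ and the a.s.\ simplicity of the leading empirical eigenvalues, and both are taken care of by the sign convention and by Assumption~\ref{a:la} together with the $\convP$ argument above.
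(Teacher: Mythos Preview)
Your proof is correct and follows exactly the approach the paper takes: the first rate comes from Assumption~\ref{a:ass-conv} (convergence in distribution implies tightness), and the other two follow by applying the deterministic perturbation inequalities of Lemmas~2.2 and~2.3 in \citetext{HKbook}. The paper states this argument in the lines immediately preceding the lemma; you have simply made the continuous mapping / tightness step and the sign convention explicit.
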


\begin{lemma}\label{p:KR1}
For $1 \le j \le p$,
\[
\lnorm r_N(\hat v_j - v_j) - T_{j, N} \rnorm = O_P\lp r_N^{-1}\rp.
\]
\end{lemma}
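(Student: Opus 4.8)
The plan is to expand $r_N(\hat v_j - v_j)$ in the orthonormal basis $\lbr v_k\rbr$ of $L^2$ and compare it, term by term, with the series defining $T_{j,N}$ (which is well defined by Lemma~\ref{l:1}). Writing
\[
r_N(\hat v_j - v_j) = r_N\lip \hat v_j - v_j, v_j\rip v_j + \sum_{k\neq j} r_N\lip \hat v_j - v_j, v_k\rip v_k,
\]
the diagonal term equals $-\tfrac12 r_N\lnorm \hat v_j - v_j\rnorm^2\, v_j$ by Lemma~\ref{l:KR1}, hence is $O_P(r_N^{-1})$ since $\lnorm \hat v_j - v_j\rnorm = O_P(r_N^{-1})$ by Lemma~\ref{l:KR2}. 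For $k\neq j$ with $\hat\la_j\neq\la_k$, the second identity in Lemma~\ref{l:KR1} gives $r_N\lip \hat v_j - v_j, v_k\rip = (\hat\la_j - \la_k)^{-1}\lip Z_N, \hat v_j\otimes v_k\rip$. Subtracting the $k$-th term $(\la_j-\la_k)^{-1}\lip Z_N, v_j\otimes v_k\rip v_k$ of $T_{j,N}$ and inserting $\pm(\hat\la_j-\la_k)^{-1}\lip Z_N, v_j\otimes v_k\rip$, the off-diagonal part of $r_N(\hat v_j-v_j)-T_{j,N}$ splits as $R_{1,N}+R_{2,N}$, where
\[
R_{1,N}=\sum_{k\neq j}(\hat\la_j-\la_k)^{-1}\lip Z_N, (\hat v_j-v_j)\otimes v_k\rip v_k,
\]
\[
R_{2,N}=\sum_{k\neq j}\Bigl[(\hat\la_j-\la_k)^{-1}-(\la_j-\la_k)^{-1}\Bigr]\lip Z_N, v_j\otimes v_k\rip v_k,
\]
and it remains to show that $\lnorm R_{1,N}\rnorm$ and $\lnorm R_{2,N}\rnorm$ are $O_P(r_N^{-1})$.

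I would carry out both bounds on the event $A_N=\lbr |\hat\la_j-\la_j|<\ag_j/2\rbr$, which by Lemma~\ref{l:KR2} has probability tending to $1$; the complement is absorbed by a routine $O_P$ argument. Since $|\la_j-\la_k|\geq\ag_j$ for every $k\neq j$ (a consequence of Assumption~\ref{a:la}, the definition \refeq{ag-j} of $\ag_j$, and the monotonicity of the eigenvalues), on $A_N$ we have $|\hat\la_j-\la_k|\geq\ag_j/2$ for all $k\neq j$, so in particular $\hat\la_j\neq\la_k$ and the identity from Lemma~\ref{l:KR1} used above applies. For $R_{1,N}$, orthonormality of the $v_k$ gives
\[
\lnorm R_{1,N}\rnorm^2=\sum_{k\neq j}(\hat\la_j-\la_k)^{-2}\lip Z_N, (\hat v_j-v_j)\otimes v_k\rip^2\le\frac{4}{\ag_j^2}\sum_{k\geq1}\lip Z_N, (\hat v_j-v_j)\otimes v_k\rip^2,
\]
and since $\lbr (\hat v_j-v_j)\otimes v_k/\lnorm\hat v_j-v_j\rnorm:\ k\geq1\rbr$ is an orthonormal system in $\cS$ (using $\lip a\otimes b_k,\,a\otimes b_{k'}\rip_\cS=\lnorm a\rnorm^2\lip b_k,b_{k'}\rip$), Bessel's inequality yields $\sum_{k\geq1}\lip Z_N, (\hat v_j-v_j)\otimes v_k\rip^2\le\lnorm Z_N\rnorm_\cS^2\lnorm\hat v_j-v_j\rnorm^2$ (the case $\hat v_j=v_j$ being trivial). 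Hence $\lnorm R_{1,N}\rnorm\le(2/\ag_j)\lnorm Z_N\rnorm_\cS\lnorm\hat v_j-v_j\rnorm=O_P(r_N^{-1})$, because $\lnorm Z_N\rnorm_\cS=O_P(1)$ by the continuous mapping theorem applied to $Z_N\convd Z$. For $R_{2,N}$, the elementary identity $(\hat\la_j-\la_k)^{-1}-(\la_j-\la_k)^{-1}=(\la_j-\hat\la_j)/[(\hat\la_j-\la_k)(\la_j-\la_k)]$ shows that, on $A_N$, this difference is at most $2|\hat\la_j-\la_j|/\ag_j^2$ in absolute value, uniformly in $k$; since $\lbr v_j\otimes v_k:\ k\geq1\rbr$ is orthonormal in $\cS$, Bessel again gives $\lnorm R_{2,N}\rnorm\le(2|\hat\la_j-\la_j|/\ag_j^2)\lnorm Z_N\rnorm_\cS=O_P(r_N^{-1})$.

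Combining the diagonal term with $R_{1,N}$ and $R_{2,N}$ gives $\lnorm r_N(\hat v_j-v_j)-T_{j,N}\rnorm=O_P(r_N^{-1})$ on $A_N$, and hence unconditionally. The \emph{main difficulty} is to make the estimates uniform in $k$: this is dealt with by passing to the high-probability event $A_N$ to bound $\inf_{k\neq j}|\hat\la_j-\la_k|$ away from $0$, and by invoking Bessel's inequality (rather than a term-by-term estimate) to control the sums $\sum_{k}\lip Z_N,\,\cdot\otimes v_k\rip^2$ by $\lnorm Z_N\rnorm_\cS^2$. Everything else is bookkeeping.
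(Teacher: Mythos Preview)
Your proof is correct and follows essentially the same route as the paper: expand $r_N(\hat v_j - v_j) - T_{j,N}$ in the basis $\{v_k\}$, treat the diagonal coefficient via the first identity in Lemma~\ref{l:KR1}, and control the off-diagonal coefficients using the second identity together with Parseval/Bessel and $\lnorm Z_N\rnorm_\cS = O_P(1)$. The only cosmetic differences are that you split additively into $R_{1,N}+R_{2,N}$ before taking norms (the paper squares first and gets three terms $D_{N,1},D_{N,2},D_{N,3}$), and your explicit restriction to the event $A_N$ makes the uniform lower bound on $|\hat\la_j-\la_k|$ transparent for general $j$, whereas the paper argues only for $j=1$ and appeals to analogy.
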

\begin{proof}
The same arguments apply to any fixed $j \in \lbr 1, 2, \ldots, p \rbr$,
so to reduce the number of indexes used, we present them for $j=1$.
Set
\[
d_{N, k} = \lip r_N(\hat v_1 - v_1) - T_{1, N}, v_k \rip,
\]
where
\[
T_{1, N} = \sum_{\ell\ge 2}  (\la_1 - \la_\ell)^{-1}
\lip Z_N, v_1\otimes v_\ell \rip v_\ell.
\]
By Parseval's identity,
\[
\lnorm r_N(\hat v_j - v_j) - T_{j, N} \rnorm^2
= \sum_{k=1}^\infty d_{N, k}^2.
\]
Focusing on the first term, $k=1$, observe that
\[
\lip T_{1, N}, v_1\rip  = \sum_{\ell\ge 2}  (\la_1 - \la_\ell)^{-1}
\lip Z_N, v_1\otimes v_\ell \rip \lip v_\ell, v_\ell \rip = 0
\]
and, by Lemmas \ref{l:KR1} and \ref{l:KR2},
\[
\lip r_N(\hat v_1 - v_1), v_1\rip
= - \frac{r_N}{2} \lnorm \hat v_1 - v_1\rnorm^2
= O_P(r_N^{-1}).
\]
We conclude that $d_{N,1}^2 O_P(r_N^{-2})$, and it remain to show that
\begin{equation} \label{e:k2}
\sum_{k=2}^\infty d_{N, k}^2 = O_P(r_N^{-2}).
\end{equation}

In the remainder of the proof it is assumed that $k \ge 2$. Since
\[\lip T_{1, N} , v_k\rip
= (\la_1 - \la_k)^{-1} \lip Z_N, v_1\otimes v_k \rip,
\]
by Lemma~\ref{l:KR1},
\[
d_{N, k} = (\hat\la_1 - \la_k)^{-1}
\lip Z_N, \hat v_1 \otimes v_k \rip
- (\la_1 - \la_k)^{-1} \lip Z_N, v_1 \otimes v_k \rip.
\]
Using a  common denominator and rearranging the numerator, we
obtain
\[
d_{N,k} = \frac{ \lip(\la_1 - \la_k) Z_N(\hat v_1 - v_1)
+ (\la_1 - \hat\la_1) Z_N(v_1) \ , \ v_k \rip }
{ (\hat\la_1 - \la_k)^2 (\la_1 - \la_k)^2 }.
\]
It is convenient to decompose the sum in \refeq{k2} as
\[
\sum_{k=2}^\infty d_{N, k}^2 = D_{N,1} + D_{N,2} + D_{N,3},
\]
where
\begin{align*}
D_{N, 1}
&= \sum_{k\ge 2} \frac{ \lip Z_N(\hat v_1 - v_1), v_k\rip^2 }
{(\hat\la_1 - \la_k)^2}, \\
D_{N, 2}
&= \sum_{k\ge 2}
\frac{2  (\la_1 - \hat\la_1) \lip Z_N(\hat v_1 - v_1), v_k\rip
\lip Z_N(v_1),  v_k\rip }
{(\hat\la_1 - \la_k)^2 (\la_1 - \la_k)}, \\
D_{N, 3}
&= \sum_{k\ge 2}
\frac{(\la_1 - \hat\la_1)^2 \lip Z_N(v_1),  v_k\rip^2}
{(\hat\la_1 - \la_k)^2 (\la_1 - \la_k)^2}.
\end{align*}
Since $\hat\la_1 - \la_k \ge \hat\la_1 - \la_2$, by Parseval's identity,
\[
D_{N, 1} \leq \frac{1}{(\hat\la_1 - \la_2)^2} \sum_{k\ge 2}
\lip Z_N(\hat v_1 - v_1), v_k\rip^2
\le \frac{ \| Z_N(\hat v_1 - v_1) \|^2}
{(\hat\la_1 - \la_2)^2}.
\]
By Lemma~\ref{l:KR2}, the denominator converges in
probability to $ (\la_1 - \la_2)^2$, and the
numerator is bounded above by
$\| Z_N\|^2 \| (\hat v_1 - v_1) \|^2 = O_P(r_N^{-2})$.

A similar argument shows that
\[
|D_{N,2}|
\le \left |
\frac{2(\la_1 - \hat\la_1)}{(\hat\la_1 - \la_2)^2 (\la_1 - \la_2) }
\right |
\left | \lip Z_N(\hat v_1 - v_1), Z_N(v_1)\rip \right |.
\]
The denominator again converges to a positive constant.
By the Cauchy--Schwarz inequality,
\[
\left | \lip Z_N(\hat v_1 - v_1), Z_N(v_1)\rip \right |
\le \| Z_N(\hat v_1 - v_1) \| \| Z_N(v_1)\|
\leq \| Z_N\|^2 \|\hat v_1 - v_1\|.
\]
We see that $D_{N,2} = O_P(r_N^{-2})$.

The above  method also shows that $D_{N,3} = O_P(r_N^{-2})$.
\end{proof}

\rightline{\QED}

\medskip

\noindent{\sc Proof of Theorem~\ref{t:KR1}:}\
 To prove the first relation, we use the decomposition
\[
r_N (\hat v_j - v_j )  = T_{j,N}+ \lp r_n (\hat v_j - v_j ) - T_{j, N} \rp.
\]
By Lemma~\ref{p:KR1}, it suffices to show that the
$T_{j, n}$ converge jointly in distribution to the $T_j$.
Consider the operator ${\bf g}: \cS \to (L^2)^p$ defined by
\[
{\bf g}(\Psi) = [ g_1(\Psi), g_2(\Psi), \ldots, g_p(\Psi)]^\top,
\]
with the functions $g_j$ defined in Lemma~\ref{l:1}. The
proof of Lemma~\ref{l:1} shows that
$\| g_j(\Psi) \| \le \ag_j^{-1} \|\Psi \|_\cS$, so each
$g_j$ is a continuous linear operator. Hence ${\bf g}$ is
continuous,  and so ${\bf g}(Z_N) \convd {\bf g}(Z)$.
Since, $g_j(Z_N) = T_{j, N}$ and $g_j(Z) = T_j$, the required
convergence follows.

Now we turn to the convergence of the eigenvalues. We will derive an
analogous decomposition,
\begin{equation} \label{e:dec-bg}
r_N(\hat\la_j - \la_j ) = \lip Z_N(v_j), v_j \rip + \bg_N(j),
\end{equation}
and show that for each $j=1, 2, \ldots, p$, $\bg_N(j) = O_P(r_N^{-1})$.
Since the projections
\[
\cS \ni \Psi \mapsto \lip \Psi(v_j), v_j \rip
= \lip \Psi, v_j \otimes v_j \rip_\cS
\]
are continuous, the claim will follow.

Observe that
\begin{align*}
(\hat\la_j - \la_j) v_j
& = \hat\la_j v_j - \hat\la_j \hat v_j + \hat\la_j \hat v_j - \la_j v_j \\
&= \hat\la_j (v_j - \hat v_j) + \widehat C (\hat v_j) - C(v_j) \\
&= (\widehat C - C)(\hat v_j) + C(\hat v_j - v_j) - \hat\la_j (\hat v_j - v_j).
\end{align*}
It follows that
\[
r_N (\hat\la_j - \la_j) v_j
= Z_N(\hat v_j)  + r_N \lbr C(\hat v_j - v_j) - \hat\la_j (\hat v_j - v_j)\rbr.
\]
We decompose the first term as
$Z_N(\hat v_j) = Z_N(v_j) + Z_N(\hat v_j - v_j)$ and get \refeq{dec-bg}
with
\begin{align*}
\bg_N(j)
&= \lip Z_N(\hat v_j - v_j), v_j \rip
+ r_N \lip C(\hat v_j - v_j) - \hat\la_j (\hat v_j - v_j), v_j \rip\\
&= r_N
\lip \lb (\widehat C - C) + C -\hat\la_j \rb(\hat v_j - v_j), v_j \rip\\
&= r_N
\lip \lb (\widehat C - C) + (C-\la_j) -(\hat\la_j-\la_j) \rb
(\hat v_j - v_j), v_j \rip
\end{align*}
By Lemma~\ref{l:KR2},
\[
\lip (\widehat C - C) (\hat v_j - v_j), v_j \rip = O_P(r_N^{-2})
\]
and
\[
\lip (\hat\la_j - \la_j) (\hat v_j - v_j), v_j \rip = O_P(r_N^{-2}).
\]
Since $C$ is symmetric
\[
\lip (C-\la_j) (\hat v_j - v_j), v_j \rip
= \lip \hat v_j - v_j, (C-\la_j)(v_j) \rip = 0.
\]
This shows that $\bg_N(j) = O_P(r_N^{-1})$, and completes the proof.

\subsubsection*{Proof of Theorem~\ref{t:M}}
We start with a simple lemma,  custom formulated for our needs.

\begin{lemma} \label{l:S1} Suppose $\{ X_n \}$ and $\{ Y_n \}$
are sequences of nonnegative random variables and $\{ a_n \}$ is a
convergent sequence of nonnegative numbers. Suppose
$X_n \le Y_n + a_n$. If the $Y_n$ are uniformly integrable,
then so are the $X_n$.
\end{lemma}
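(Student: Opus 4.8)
The plan is to verify the defining property of uniform integrability for $\{X_n\}$ directly, namely that $\sup_n E[X_n I_{\{X_n > M\}}] \to 0$ as $M \to \infty$, by transferring the tail of $X_n$ to a tail of $Y_n$ through the pointwise inequality $X_n \le Y_n + a_n$.

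First I would record two elementary consequences of the hypotheses: since the sequence $\{a_n\}$ converges it is bounded, say $a_n \le A$ for all $n$ and some finite $A$; and since $\{Y_n\}$ is uniformly integrable it is bounded in $L^1$, say $\sup_n E[Y_n] =: B < \infty$. Next, for a fixed threshold $M > A$, on the event $\{X_n > M\}$ the bound $X_n \le Y_n + A$ forces $Y_n > M - A$, so $\{X_n > M\} \subseteq \{Y_n > M - A\}$, and hence
\[
E\bigl[X_n I_{\{X_n > M\}}\bigr] \le E\bigl[(Y_n + A) I_{\{Y_n > M - A\}}\bigr] = E\bigl[Y_n I_{\{Y_n > M - A\}}\bigr] + A\,P(Y_n > M - A).
\]
Taking the supremum over $n$, the first term on the right tends to $0$ as $M \to \infty$ by the uniform integrability of $\{Y_n\}$, while the second is at most $AB/(M - A)$ by Markov's inequality and therefore also tends to $0$. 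This yields $\sup_n E[X_n I_{\{X_n > M\}}] \to 0$, which is exactly uniform integrability of $\{X_n\}$.

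There is essentially no serious obstacle here; the only point requiring a little care is to use uniform integrability of $\{Y_n\}$ in a form that also supplies its $L^1$-boundedness (or to invoke the standard fact that a uniformly integrable family is bounded in $L^1$), so that the Markov bound on $A\,P(Y_n > M - A)$ is uniform in $n$. Everything else is a one-line truncation argument.
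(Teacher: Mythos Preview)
Your proof is correct. You verify uniform integrability via the tail criterion $\sup_n E[X_n I_{\{X_n>M\}}]\to 0$, using the set inclusion $\{X_n>M\}\subset\{Y_n>M-A\}$ together with Markov's inequality and the $L^1$-boundedness of $\{Y_n\}$.

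The paper takes a slightly different but equally elementary route: instead of the tail definition, it uses the equivalent characterization of uniform integrability as $L^1$-boundedness plus uniform absolute continuity (for every $\epsilon>0$ there is $\delta>0$ with $\sup_n E[X_n 1_A]<\epsilon$ whenever $P(A)<\delta$). From $X_n\le Y_n+C$ with $C=\sup_n a_n$ one gets $E[X_n 1_A]\le E[Y_n 1_A]+C\,P(A)$ for every event $A$, and both conditions follow immediately. The paper's version avoids the set inclusion step and the Markov bound, at the cost of invoking an alternate (but standard) characterization; your version works straight from the usual definition. Neither approach offers a real advantage over the other here.
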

\begin{proof}
We will establish a more general result under the assumption that $C:= \sup_{n\in{\mathbb N}} a_n <\infty$.
 Recall that a sequence $\{X_n\}$ is uniformly integrable if and only if the following two conditions hold

 {\em (i)} We have $\sup_{n\in\mathbb N} E |X_n| <\infty$.

 {\em (ii)} For all $\epsilon>0$, there exists a $\delta>0$, such that
 $$
  \sup_{n\in\mathbb N} E \left( |X_n|1_A\right) <\epsilon,
 $$
 for all events such that $P(A)<\delta$
(see, e.g., Theorem 6.5.1 on page 184 in \citetext{resnick:1999}).

 Since $\{Y_n\}$ is uniformly integrable, we have  $\sup_{n\in \mathbb N} E |Y_n| <\infty$ and Condition (i) above follows from
 the triangle inequality and the boundedness of the sequence $\{a_n\}$.  To show that Condition (ii) holds, observe that by
 the triangle inequality
 \begin{equation}\label{e:Xn_via_Yn_and_C}
 \sup_{n\in\mathbb N}E \left(|X_n| 1_A \right) \le \sup_{n\in\mathbb N}E \left(|Y_n| 1_A \right) + C P(A).
 \end{equation}
 Using the uniform integrability of $\{Y_n\}$, for every $\epsilon>0$, one can find $\delta'>0$ such that
 the first term in the right-hand side of \refeq{Xn_via_Yn_and_C} is less than $\epsilon/2$, provided
 $P(A)<\delta'$.  By setting $\delta:= \min\{\delta',\epsilon/(2C)\}$, we also ensure that the second
 term therein is less than $\epsilon/2$ for all $P(A)<\delta\le \delta'$.  This completes the proof of the uniform integrability of $\{X_n\}$.
\end{proof}

\rightline{\QED}

In the following, we assume that $\ga$ is a fixed number in $(0, \ag/2)$.
Theorem 6.1 of \citetext{acosta:gine:1979} implies that, in the
notation of Theorem~\ref{t:KM}, cf. \refeq{KM-stable},
\[
\lim_{N\to\infty}
E \lnorm b_N^{-1} \lp \sum_{i=1}^N Z_i-\ga_N \rp \rnorm^\gamma
= E\lnorm S \rnorm^\gamma.
\]
Applying the above result to \refeq{asym-prodX}, we obtain
\begin{equation} \label{e:C0}
\lim_{N\to\infty} E \lnorm S_N \rnorm^\gamma
= E\lnorm S \rnorm^\gamma,
\end{equation}
where
\[
S_N  = k_N^{-1} \lp \sum_{i=1}^N X_i \otimes X_i-\psi_N\rp.
\]
In the framework of Theorem~\ref{t:asym-hatC-C}, set
\[
M= \int_{ \displaystyle {\mathbb B}_\cS^c } y \mu_{X\otimes X}(dy)
\]
and
\[
M_N= k_N^{-1} N E \lb (X\otimes X) I_{\lbr \| X \|^2 \ge k_N \rbr} \rb,
\]
so that \refeq{S1} becomes
\[
N k_N^{-1} \lp \widehat C - C \rp  = S_N - M_N
\]
with $S_N \convd S$ and $\| M_N - M \|_\cS \to 0$. We now
explain why we can  conclude that
\begin{equation} \label{e:C1}
E\lnorm S_N - M_N \rnorm_\cS^\ga \to  E\lnorm S - M \rnorm_\cS^\ga.
\end{equation}
Since $S_N - M_N \convd S-M$ in $\cS$, $\lnorm S_N - M_N
\rnorm_\cS^\ga \convd \lnorm S - M \rnorm_\cS^\ga$ in $\mathbb R$.
Convergence \refeq{C1} will follow if we can assert that the
nonnegative random variables $\lnorm S_N - M_N \rnorm_\cS^\ga$ are
uniformly integrable.  Since $\lnorm S_N \rnorm_\cS^\ga \convd \lnorm
S \rnorm_\cS^\ga$ and \refeq{C0} holds, Theorem 3.6 in
\citetext{billingsley:1999} implies that the random variables $\lnorm
S_N \rnorm_\cS^\ga$ are uniformly integrable. Relation~\refeq{C1}
thus follows from the inequality
\[
\lnorm S_N - M_N \rnorm_\cS^\ga \le C_\ga
\lbr \lnorm S_N \rnorm_\cS^\ga  +\lnorm M_N \rnorm_\cS^\ga \rbr
\]
and Lemma~\ref{l:S1}. Relation \refeq{C1} implies the first
relation in Theorem~\ref{t:M} with $L_\ga(N) = L_0^{-2\ga}(N)$.

Since $|\hat\la_j - \la_j | \le \| \widehat C - C \|_\cS$
(see e.g. Lemma 2.2 in \citetext{HKbook}), the second relation
follows from the first. Under Assumption~\ref{a:la},
$\|\hat v_j - v_j \| \le  a_j \| \widehat C - C \|_\cS$ (see e.g.
Lemma 2.3 in \citetext{HKbook} or Lemma 4.3 in \citetext{bosq:2000}),
so the third relation also follows from the first.

\subsection{Proof of Theorem~\ref{t:a}}
Since $\lnorm \Psi_{KL}-\Psi \rnorm_{\mathcal{L}} \to 0$ by
\refeq{L-S} and \refeq{A1}, it is enough to show that
\begin{equation}\label{e:con-YKL}
\lnorm \widehat \Psi_{KL}-\Psi_{KL} \rnorm_{\mathcal{L}}
\convas  0.
\end{equation}
The operators $\Psi_{KL}$  and $\widehat \Psi_{KL}$ have
 the following expansions:
\[
\widehat \Psi_{KL}(x) =  \sum_{k=1}^K \sum_{\ell =1}^L
\frac{\hat\sg_{\ell k}}{\hat\la_\ell}  \lip \hat v_\ell, x \rip \hat u_k,
\ \ \
\Psi_{KL}  (x) =  \sum_{k=1}^K \sum_{\ell =1}^L
\frac{\sg_{\ell k}}{\la_\ell} \lip v_\ell, x \rip u_k.
\]
Introduce the sample analogs of the subspaces $\mathcal V_L$ and
$\mathcal U_K$,
\[
\widehat{\mathcal V}_L = {\rm span}\lbr \hat v_1, \hat v_2, \ldots, \hat v_L \rbr,
\ \ \ \widehat {\mathcal U}_K = {\rm span} \lbr \hat u_1, \hat u_2, \ldots, \hat u_K \rbr,
\]
and consider the following projections:
\[
\pi^L=\text{projection onto} \ {\mathcal V}_L,
\ \ \  \hat \pi^L=\text{projection onto} \ \widehat{\mathcal V}_L;
\]
\[
\pi^K=\text{projection onto} \ {\mathcal U}_K, \ \ \
\hat \pi^K=\text{projection onto} \  \widehat {\mathcal U}_K.
\]
Observe that
\[
\widehat \Psi_{KL} =\hat \pi^K D_N \widehat C^{-1}\hat \pi^L, \\\ \Psi_{KL} =\pi^K D C^{-1} \pi^L,
\]
where
\[
D=E \left[ X \otimes Y \right], \\\ D_N=\frac{1}{N} \sum_{i=1}^N X_i \otimes Y_i,
\]
and
\[
C=\sum_{j=1}^\infty \la_j v_j \otimes v_j, \\\ \widehat C=\sum_{j=1}^\infty \hat \la_j \hat v_j \otimes \hat v_j, \\\ C^{-1}=\sum_{j=1}^\infty \la_j^{-1} v_j \otimes v_j, \\\ \widehat C^{-1}=\sum_{j=1}^\infty \hat \la_j^{-1} \hat v_j \otimes \hat v_j.
\]
Notice that for any $y=\pi^L(x)$ or $y=\hat \pi^L(x)$, $C^{-1}(y)$ and $\widehat C^{-1}(y)$ exist.\\

For $x \in L^2$, consider the decomposition
\[
\begin{split}
\lp \widehat \Psi_{KL}-\Psi_{KL} \rp (x)=& \hat \pi^K D_N \lp \sum_{j=1}^L \hat \la_j^{-1} \lip \hat v_j, x \rip  \hat v_j \rp- \pi^K D \lp \sum_{j=1}^L \la_j^{-1} \lip v_j, x \rip  v_j \rp\\
=& \hat \pi^K D_N \lp \sum_{j=1}^L \lp \hat \la_j^{-1}-\la_j^{-1} \rp \lip \hat v_j, x \rip  \hat v_j \rp\\
& \ \ +\hat \pi^K D_N \lp \sum_{j=1}^L \la_j^{-1} \lip \hat v_j-v_j, x \rip  \hat v_j \rp \\
& \ \ +\hat \pi^K D_N \lp \sum_{j=1}^L \la_j^{-1} \lip v_j, x \rip  \lp \hat v_j-v_j \rp \rp\\
& \ \ + \lp \hat \pi^K D_N -\pi^K D \rp \lp \sum_{j=1}^L \la_j^{-1} \lip v_j, x \rip  v_j \rp\\
=&: a_N(x)+b_N(x)+c_N(x)+d_N(x),
\end{split}
\]
where
\begin{align*}
&
a_N(x)=\hat \pi^K D_N \lp \sum_{j=1}^L \lp \hat \la_j^{-1}-\la_j^{-1} \rp \lip \hat v_j, x \rip  \hat v_j \rp, \\
&
b_N(x)=\hat \pi^K D_N \lp \sum_{j=1}^L \la_j^{-1} \lip \hat v_j-v_j, x \rip  \hat v_j \rp, \\
&
c_N(x)=\hat \pi^K D_N
\lp \sum_{j=1}^L \la_j^{-1} \lip v_j, x \rip  \lp \hat v_j-v_j \rp \rp,\\
&
d_N(x)=\lp \hat \pi^K D_N -\pi^K D \rp
\lp \sum_{j=1}^L \la_j^{-1} \lip v_j, x \rip  v_j \rp.
\end{align*}

Relation \refeq{con-YKL} will follow from Lemmas \ref{l:aN},
\ref{l:bN}, \ref{l:cN} and \ref{l:dN}.
The first two of these lemmas use the following result.

\begin{lemma}\label{l:piK-DN}
Under the assumptions of Theorem~\ref{t:a},
\[
\lnorm \hat \pi^K D_N (\hat v_j) \rnorm \le \hat \la_j^{1/2} \lp \frac{1}{N}\sum_{i=1}^N \lnorm Y_i \rnorm^2  \rp^{1/2}.
\]
\end{lemma}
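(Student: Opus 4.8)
The plan is to unwind the definitions and reduce the bound to a Cauchy--Schwarz estimate combined with the eigen-equation for $\widehat C$. By the definition of the tensor product, $(X_i \otimes Y_i)(\hat v_j) = \lip X_i, \hat v_j \rip Y_i = \hat\xi_{ij} Y_i$ with $\hat\xi_{ij} = \lip X_i, \hat v_j \rip$, so that
\[
\hat \pi^K D_N(\hat v_j) = \frac{1}{N} \sum_{i=1}^N \hat\xi_{ij}\, \hat \pi^K(Y_i).
\]
Since $\hat \pi^K$ is an orthogonal projection it is a contraction, hence $\lnorm \hat \pi^K(Y_i) \rnorm \le \lnorm Y_i \rnorm$, and the triangle inequality yields
\[
\lnorm \hat \pi^K D_N(\hat v_j) \rnorm
\le \frac{1}{N} \sum_{i=1}^N |\hat\xi_{ij}|\, \lnorm Y_i \rnorm .
\]

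Next I would apply the Cauchy--Schwarz inequality on $\mathbb{R}^N$ equipped with the normalized counting measure to the pairing of $(|\hat\xi_{ij}|)_{i=1}^N$ with $(\lnorm Y_i \rnorm)_{i=1}^N$, obtaining
\[
\lnorm \hat \pi^K D_N(\hat v_j) \rnorm
\le \lp \frac{1}{N} \sum_{i=1}^N \hat\xi_{ij}^2 \rp^{1/2}
\lp \frac{1}{N} \sum_{i=1}^N \lnorm Y_i \rnorm^2 \rp^{1/2}.
\]

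The key observation that finishes the proof is that the empirical second moment of the $j$th scores coincides with the $j$th sample eigenvalue:
\[
\frac{1}{N} \sum_{i=1}^N \hat\xi_{ij}^2 = \frac{1}{N} \sum_{i=1}^N \lip X_i, \hat v_j \rip^2
= \lip \widehat C(\hat v_j), \hat v_j \rip = \hat\la_j \lnorm \hat v_j \rnorm^2 = \hat\la_j ,
\]
where we used $\widehat C = N^{-1} \sum_{i=1}^N X_i \otimes X_i$, the defining relation $\widehat C(\hat v_j) = \hat\la_j \hat v_j$, and $\lnorm \hat v_j \rnorm = 1$. Inserting this into the previous display gives the assertion. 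There is no real obstacle here; the only points demanding mild care are keeping the $1/N$ normalizations straight and respecting the convention $(x \otimes y)(z) = \lip x, z \rip y$, so that it is the $X$-scores $\hat\xi_{ij}$ --- and not the $Y$-scores --- that appear when $D_N$ is applied to $\hat v_j$.
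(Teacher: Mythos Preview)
Your argument is correct. The paper's proof reaches the same conclusion through a slightly different organization: it expands $\hat\pi^K D_N(\hat v_j)$ in the $\hat u_\ell$ basis, bounds each Fourier coefficient $\lip \hat\pi^K D_N(\hat v_j), \hat u_\ell\rip$ by $\hat\la_j^{1/2}\hat\ga_\ell^{1/2}$ via Cauchy--Schwarz, and then applies Parseval together with $\sum_\ell \hat\ga_\ell = N^{-1}\sum_i \lnorm Y_i\rnorm^2$. Your route is a bit more direct: you avoid the coefficient-wise computation by invoking the contractivity of the projection and the triangle inequality up front, and then apply Cauchy--Schwarz once in $\mathbb{R}^N$. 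The key identity $N^{-1}\sum_i \hat\xi_{ij}^2 = \lip \widehat C(\hat v_j), \hat v_j\rip = \hat\la_j$ is the same in both proofs, so the two arguments are close cousins; yours simply trades the Parseval step for the elementary bound $\lnorm \hat\pi^K(Y_i)\rnorm \le \lnorm Y_i\rnorm$.
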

\begin{proof}
For each integer $\ell$, we have
\[
\begin{split}
|\lip \hat \pi^K D_N (\hat v_j), \hat u_\ell \rip | =& \Big| \lip \sum_{k=1}^K \frac{1}{N} \sum_{i=1}^N \lip X_i,\hat v_j \rip \lip Y_i, \hat u_k \rip \hat u_k, \hat u_\ell \rip \Big| \\
=& \Big| \frac{1}{N} \sum_{i=1}^N \lip X_i,\hat v_j \rip \lip Y_i, \hat u_\ell \rip \Big|\\
\le & \frac{1}{N} \lp \sum_{i=1}^N \lip X_i,\hat v_j \rip^2 \rp^{1/2}\lp \sum_{i=1}^N \lip Y_i,\hat u_\ell \rip^2 \rp^{1/2} \\
=& \lp \lip \widehat C(\hat v_j), \hat v_j \rip \rp^{1/2}
\lp \lip \widehat C_Y(\hat u_\ell), \hat u_\ell \rip \rp^{1/2}\\
=& \hat \la_j^{1/2}\hat\ga_\ell^{1/2}.  \ \ \ \ \
(\hat\ga_\ell = \langle \widehat C_Y(\hat u_\ell), \hat u_\ell \rangle.)
\end{split}
\]
Therefore,
\[
\lnorm \hat \pi^K D_N (\hat v_j) \rnorm=\sum_{\ell=1}^\infty \lip \hat \pi^K D_N (\hat v_j), \hat u_\ell \rip^2 \le \hat \la_j \sum_{\ell=1}^\infty \hat \ga_\ell,
\]
and
\[
\sum_{\ell=1}^\infty \hat \ga_\ell = \sum_{\ell=1}^\infty \lp \frac{1}{N} \sum_{i=1}^N \lip Y_i,\hat u_\ell \rip^2 \rp=\frac{1}{N}\sum_{i=1}^N \lnorm Y_i \rnorm^2.
\]
Hence the claim holds.
\end{proof}

\rightline{\QED}

\begin{lemma}\label{l:aN}
Under the assumptions of Theorem~\ref{t:a},
$\lnorm a_N \rnorm_{\mathcal{L}} \convas  0$.
\end{lemma}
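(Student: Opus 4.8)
The plan is to estimate the operator norm $\lnorm a_N\rnorm_{\cL}=\sup_{\lnorm x\rnorm\le 1}\lnorm a_N(x)\rnorm$ by brute force, using the \emph{a priori} control on $\hat\pi^K D_N(\hat v_j)$ from Lemma~\ref{l:piK-DN}. Since $\hat\pi^K D_N$ is linear, for $\lnorm x\rnorm\le 1$ we have $a_N(x)=\sum_{j=1}^L\lp\hat\la_j^{-1}-\la_j^{-1}\rp\lip\hat v_j,x\rip\,\hat\pi^K D_N(\hat v_j)$, so by the triangle inequality followed by the Cauchy--Schwarz inequality in the index $j$,
\[
\lnorm a_N(x)\rnorm\le\lp\sum_{j=1}^L\lip\hat v_j,x\rip^2\rp^{1/2}\lp\sum_{j=1}^L\lp\hat\la_j^{-1}-\la_j^{-1}\rp^2\lnorm\hat\pi^K D_N(\hat v_j)\rnorm^2\rp^{1/2}.
\]
The first factor is at most $\lnorm x\rnorm\le 1$ by Bessel's inequality, since the $\hat v_j$ are orthonormal. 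For the second factor I would insert Lemma~\ref{l:piK-DN}, which gives $\lnorm\hat\pi^K D_N(\hat v_j)\rnorm^2\le\hat\la_j\,N^{-1}\sum_{i=1}^N\lnorm Y_i\rnorm^2$, together with the identity $\lp\hat\la_j^{-1}-\la_j^{-1}\rp^2\hat\la_j=(\la_j-\hat\la_j)^2/(\hat\la_j\la_j^2)$; the whole estimate then reduces to bounding $\sum_{j=1}^L(\la_j-\hat\la_j)^2/(\hat\la_j\la_j^2)$. Note the Cauchy--Schwarz step is essential here: it is what produces the factor $L^{1/2}$ rather than $L$, which is the form that matches Assumption~\ref{a:rates}.

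To bound this sum I would restrict attention to the event $\Omega_N=\lbr\hat\la_j\ge\la_j/2\ \forall\,j\le L\rbr$, on which, using the standard eigenvalue perturbation bound $|\hat\la_j-\la_j|\le\lnorm\widehat C-C\rnorm_\cS$ and the monotonicity $\la_j\ge\la_L$,
\[
\sum_{j=1}^L\frac{(\la_j-\hat\la_j)^2}{\hat\la_j\la_j^2}\le 2\lnorm\widehat C-C\rnorm_\cS^2\sum_{j=1}^L\la_j^{-3}\le 2L\la_L^{-3}\lnorm\widehat C-C\rnorm_\cS^2.
\]
Assembling the bounds, on $\Omega_N$ one gets $\lnorm a_N\rnorm_\cL\le\sqrt{2}\,\lp N^{-1}\sum_{i=1}^N\lnorm Y_i\rnorm^2\rp^{1/2}L^{1/2}\la_L^{-3/2}\lnorm\widehat C-C\rnorm_\cS$. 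Here the first factor converges a.s.\ to $\lp E\lnorm Y\rnorm^2\rp^{1/2}<\infty$ by the strong law of large numbers (and $E\lnorm Y\rnorm^2<\infty$ under the hypotheses of Theorem~\ref{t:a}); the product $L^{1/2}\la_L^{-3/2}$ is $O\lp N^{1-1/\ga}\rp$ by \refeq{c-a} in Assumption~\ref{a:rates}, for the $\ga\in(1,\ag/2)$ fixed there; and $N^{1-1/\ga}\lnorm\widehat C-C\rnorm_\cS\to 0$ a.s.\ by Corollary~\ref{c:as}. Multiplying the three, $\lnorm a_N\rnorm_\cL\convas 0$, once we know that $\Omega_N$ holds for all $N$ large enough with probability one.

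The step I expect to be the real obstacle is exactly this last point, that $\Omega_N$ occurs eventually a.s.: because $L=L(N)\to\infty$ and $\la_L\downarrow 0$, the requirement $\max_{j\le L}|\hat\la_j-\la_j|\le\la_L/2$ is uniform over a growing set of indices, so it needs the almost sure rate of $\lnorm\widehat C-C\rnorm_\cS$ to beat the decay of $\la_L$. I would argue that \refeq{c-a} (using $L\ge 1$) forces $\la_L\ge cN^{-(2/3)(1-1/\ga)}$ for large $N$, while Corollary~\ref{c:as} yields $\lnorm\widehat C-C\rnorm_\cS=o\lp N^{-(1-1/\ga)}\rp$ a.s.; since $(2/3)(1-1/\ga)<1-1/\ga$, the ratio $\lnorm\widehat C-C\rnorm_\cS/\la_L$ tends to $0$ a.s., and because $\max_{j\le L}|\hat\la_j-\la_j|\le\lnorm\widehat C-C\rnorm_\cS$ this gives $\hat\la_j\ge\la_j-\la_L/2\ge\la_j/2$ for all $j\le L$ once $N$ is large. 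Everything else is routine bookkeeping.
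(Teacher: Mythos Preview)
Your proof is correct and follows essentially the same route as the paper: both apply Lemma~\ref{l:piK-DN}, the eigenvalue perturbation bound $|\hat\la_j-\la_j|\le\lnorm\widehat C-C\rnorm_\cS$, and Cauchy--Schwarz in $j$ to arrive at the identical estimate $\lnorm a_N\rnorm_\cL\le\sqrt{2}\,(N^{-1}\sum_i\lnorm Y_i\rnorm^2)^{1/2}\la_L^{-3/2}L^{1/2}\lnorm\widehat C-C\rnorm_\cS$, and then conclude via Corollary~\ref{c:as}, the SLLN, and condition~\refeq{c-a}. Your treatment of the event $\Omega_N$ is in fact more careful than the paper's: the paper simply asserts ``By Corollary~\ref{c:as}, for $N>N_1$ (random), $\hat\la_L\ge\la_L/2$'' without justifying why $\lnorm\widehat C-C\rnorm_\cS\le\la_L/2$ eventually despite $\la_L\downarrow 0$, whereas you correctly extract the lower bound $\la_L\ge cN^{-(2/3)(1-1/\ga)}$ from \refeq{c-a} and compare it to the a.s.\ rate $o(N^{-(1-1/\ga)})$ --- so the step you flagged as ``the real obstacle'' is handled, and handled more explicitly than in the original.
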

\begin{proof}
Observe that
\[
\begin{split}
\lnorm a_N(x) \rnorm & =\lnorm \hat \pi^K D_N \lp \sum_{j=1}^L \lp \hat \la_j^{-1}-\la_j^{-1} \rp \lip \hat v_j, x \rip  \hat v_j \rp \rnorm\\
& \le \sum_{j=1}^L \frac{| \hat \la_j-\la_j |}{\hat \la_j \la_j} | \lip \hat v_j, x \rip |  \lnorm \hat \pi^K D_N (\hat v_j) \rnorm.
\end{split}
\]
By Lemma~\ref{l:piK-DN}, Lemma 2.2 of \citetext{HKbook} and the Cauchy-Schwarz inequality, we obtain the bound
\[
\begin{split}
\lnorm a_N(x) \rnorm \le & \sum_{j=1}^L \la_j^{-1} \hat \la_j^{-1/2}  | \lip \hat v_j, x \rip |  \lp \frac{1}{N}\sum_{i=1}^N \lnorm Y_i \rnorm^2  \rp^{1/2} \lnorm \widehat C-C \rnorm_{\mathcal{L}} \\
\le & \la_L^{-1} \hat \la_L^{-1/2} \lnorm x \rnorm L^{1/2} \lp \frac{1}{N} \sum_{i=1}^N \lnorm Y_i \rnorm^2  \rp^{1/2} \lnorm \widehat C-C \rnorm_{\mathcal{L}}.
\end{split}
\]
By Corollary~\ref{c:as}, for $N>N_1$ (random),
\[
\hat \la_L \ge \la_L- \lnorm \widehat C-C \rnorm_{\mathcal{L}} \ge \la_L/2.
\]
Then we have
\[
\lnorm a_N \rnorm_{\mathcal{L}} \le \sqrt{2} \lp \frac{1}{N} \sum_{i=1}^N \lnorm Y_i \rnorm^2  \rp^{1/2} \la_L^{-3/2} L^{1/2}  \lnorm \widehat C-C \rnorm_{\mathcal{L}}.
\]
Corollary~\ref{c:as} implies that, for any $\ga \in (1, \alpha/2)$,
$
N^{1-1/\ga} \| \widehat C-C \|_{\cS} \convas 0,
$
and by the strong law of large numbers
\[
\frac{1}{N} \sum_{i=1}^N \lnorm Y_i \rnorm^2 \convas
 E \lnorm Y \rnorm^2 \le 2 \lp \lnorm \Psi \rnorm_\cS^2 E \lnorm X \rnorm^2+E\lnorm \eg \rnorm^2  \rp < \infty.
\]
The  claim thus  follows from condition \refeq{c-a}.
\end{proof}

\rightline{\QED}

\begin{lemma}\label{l:bN}
Under the assumptions of Theorem~\ref{t:a},
$\lnorm b_N \rnorm_{\mathcal{L}} \convas  0$.
\end{lemma}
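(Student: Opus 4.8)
The plan is to follow the template of the proof of Lemma~\ref{l:aN}; in fact $b_N$ is slightly easier, since no division by the random eigenvalue $\hat\la_j$ occurs (only by the deterministic $\la_j$), the price being that one must control the scalar $\lip \hat v_j - v_j, x\rip$ instead of $\hat\la_j^{-1}-\la_j^{-1}$. First I would use the linearity of $\hat \pi^K D_N$ to write
\[
b_N(x) = \sum_{j=1}^L \la_j^{-1}\lip \hat v_j - v_j, x\rip\, \hat \pi^K D_N(\hat v_j),
\]
and then estimate, using the Cauchy--Schwarz inequality $|\lip \hat v_j - v_j, x\rip|\le \lnorm \hat v_j - v_j\rnorm\,\lnorm x\rnorm$ and Lemma~\ref{l:piK-DN},
\[
\lnorm b_N(x)\rnorm \le \lnorm x\rnorm\lp\frac1N\sum_{i=1}^N\lnorm Y_i\rnorm^2\rp^{1/2}\sum_{j=1}^L \la_j^{-1}\hat\la_j^{1/2}\lnorm \hat v_j - v_j\rnorm .
\]

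The next step is to reduce the last sum to the quantity controlled by \refeq{c-b}. By Corollary~\ref{c:as}, $\lnorm \widehat C - C\rnorm_\cS\convas 0$, so there is a (random) $N_1$ such that for $N>N_1$ one has $\hat\la_j\le \la_1+1$ for all $j$ and $\lnorm \hat v_j - v_j\rnorm\le (2\sqrt2/\ag_j)\lnorm \widehat C - C\rnorm_\cS$, with $\ag_j$ as in \refeq{ag-j}; bounding also $\la_j^{-1}\le\la_L^{-1}$ for $j\le L$, this yields, for $N>N_1$,
\[
\lnorm b_N\rnorm_\cL \le 2\sqrt2\,(\la_1+1)^{1/2}\,\la_L^{-1}\lp\sum_{j=1}^L \ag_j^{-1}\rp\lnorm \widehat C - C\rnorm_\cS\lp\frac1N\sum_{i=1}^N\lnorm Y_i\rnorm^2\rp^{1/2}.
\]
To conclude I would invoke Corollary~\ref{c:as} once more in the form $N^{1-1/\ga}\lnorm \widehat C - C\rnorm_\cS\convas 0$ for the $\ga\in(1,\ag/2)$ fixed in Assumption~\ref{a:rates}, together with the strong law of large numbers, which gives $\frac1N\sum_{i=1}^N\lnorm Y_i\rnorm^2\convas E\lnorm Y\rnorm^2\le 2\lp\lnorm \Psi\rnorm_\cS^2 E\lnorm X\rnorm^2 + E\lnorm \eg\rnorm^2\rp<\infty$. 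Rewriting the bound as
\[
\lnorm b_N\rnorm_\cL \le 2\sqrt2\,(\la_1+1)^{1/2}\lb\la_L^{-1}\lp\sum_{j=1}^L\ag_j^{-1}\rp N^{1/\ga-1}\rb\lb N^{1-1/\ga}\lnorm \widehat C - C\rnorm_\cS\rb\lp\frac1N\sum_{i=1}^N\lnorm Y_i\rnorm^2\rp^{1/2},
\]
the first bracket stays bounded by \refeq{c-b}, the second tends to $0$ a.s., and the last converges a.s.\ to a finite limit, so $\lnorm b_N\rnorm_\cL\convas 0$.

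The argument is routine; the only point that needs attention is the bookkeeping in the second and third displays — one should use $\la_j^{-1}\le\la_L^{-1}$ (rather than track the individual $\la_j$) and absorb the genuinely harmless factors $\hat\la_j^{1/2}$ and $\frac1N\sum_i\lnorm Y_i\rnorm^2$ into constants via Corollary~\ref{c:as} and the SLLN, so that what is left to control is precisely $\la_L^{-1}\lp\sum_{j=1}^L\ag_j^{-1}\rp N^{1/\ga-1}$, which is exactly what condition \refeq{c-b} of Assumption~\ref{a:rates} provides.
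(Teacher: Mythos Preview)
Your proof is correct and follows essentially the same route as the paper: both expand $b_N(x)$ linearly, apply Lemma~\ref{l:piK-DN} and the bound $\lnorm \hat v_j - v_j\rnorm \le 2\sqrt2\,\ag_j^{-1}\lnorm \widehat C - C\rnorm_\cS$, replace $\la_j^{-1}$ by $\la_L^{-1}$, control $\hat\la_j^{1/2}$ by a constant via Corollary~\ref{c:as}, and conclude using the SLLN and condition~\refeq{c-b}. The only cosmetic difference is that the paper bounds $\hat\la_1^{1/2}$ by $(3\la_1/2)^{1/2}$ while you use $(\la_1+1)^{1/2}$.
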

\begin{proof}
Lemma~\ref{l:piK-DN} implies that
\[
\begin{split}
\lnorm b_N(x)  \rnorm =& \lnorm \hat \pi^K D_N \lp \sum_{j=1}^L \la_j^{-1} \lip \hat v_j-v_j, x \rip  \hat v_j \rp \rnorm \\
\le & \sum_{j=1}^L \la_j^{-1} | \lip \hat v_j-v_j, x \rip |  \lnorm \hat \pi^K D_N (\hat v_j) \rnorm\\
\le & \sum_{j=1}^L \la_j^{-1} \hat \la_j^{1/2} \lnorm x \rnorm \lnorm \hat v_j-v_j \rnorm  \lp \frac{1}{N} \sum_{i=1}^N \lnorm Y_i \rnorm^2  \rp^{1/2}.
\end{split}
\]
Lemma 2.3 of \citetext{HKbook} yields the relation
\[
\lnorm \hat v_j-v_j \rnorm \le 2\sqrt{2} \alpha_j^{-1}
\lnorm \widehat C-C \rnorm_{\mathcal{L}},
\]
with the $\alpha_i$ defined in \refeq{ag-j}. Hence,
\[
\lnorm b_N\rnorm_{\mathcal{L}} \le 2 \sqrt{2} \la_L^{-1} \hat \la_1^{1/2} \lp \sum_{j=1}^L\alpha_j^{-1} \rp  \lp \frac{1}{N} \sum_{i=1}^N \lnorm Y_i \rnorm^2  \rp^{1/2} \lnorm \widehat C-C \rnorm_{\mathcal{L}}.
\]
Since, for $N>N_2$ (random),
\[
\hat \la_1 \le \la_1+ \lnorm \widehat C-C \rnorm_{\mathcal{L}} \le \frac{3}{2} \la_1,
\]
we have
\[
\lnorm b_N \rnorm_{\mathcal{L}} \le 2 \sqrt{3} \la_L^{-1} \la_1^{1/2} \lp \sum_{j=1}^L\alpha_j^{-1} \rp  \lp \frac{1}{N} \sum_{i=1}^N \lnorm Y_i \rnorm^2  \rp^{1/2} \lnorm \widehat C-C \rnorm_{\mathcal{L}}.
\]
By Corollary~\ref{c:as} and the strong law of large numbers,
the claim follows from \refeq{c-b}.
\end{proof}

\rightline{\QED}

\begin{lemma}\label{l:cN}
  Under the assumptions of Theorem~\ref{t:a}, $ \lnorm c_N
  \rnorm_{\mathcal{L}} \convas 0 $.
\end{lemma}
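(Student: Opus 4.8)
The plan is to bound $\lnorm c_N \rnorm_{\mathcal L}$ by an expression of exactly the same shape as the bound obtained for $b_N$ in Lemma~\ref{l:bN}, so that the conclusion again follows from condition \refeq{c-b}. The only structural difference between $b_N$ and $c_N$ is that here the estimation error $\hat v_j - v_j$ sits inside the argument of $\hat\pi^K D_N$ rather than inside an inner product. I would begin by writing
\[
c_N(x) = \sum_{j=1}^L \la_j^{-1} \lip v_j, x \rip\, \hat\pi^K D_N\lp \hat v_j - v_j \rp,
\]
so that $\lnorm c_N(x) \rnorm \le \sum_{j=1}^L \la_j^{-1} |\lip v_j, x \rip|\, \lnorm \hat\pi^K D_N(\hat v_j - v_j) \rnorm$.

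The key step is a version of Lemma~\ref{l:piK-DN} valid for an arbitrary argument $w \in L^2$. Repeating the Cauchy--Schwarz computation in the proof of Lemma~\ref{l:piK-DN} with $w / \lnorm w \rnorm$ in place of $\hat v_j$ gives, for each $\ell$,
\[
\left| \lip \hat\pi^K D_N(w), \hat u_\ell \rip \right| \le \lnorm w \rnorm\, \lip \widehat C(w/\lnorm w \rnorm), w/\lnorm w \rnorm \rip^{1/2} \hat\ga_\ell^{1/2} \le \lnorm w \rnorm\, \lnorm \widehat C \rnorm_{\mathcal L}^{1/2}\, \hat\ga_\ell^{1/2},
\]
and then, squaring, summing over $\ell \le K$, and using $\sum_{\ell \ge 1} \hat\ga_\ell = \frac1N \sum_{i=1}^N \lnorm Y_i \rnorm^2$ as in that proof,
\[
\lnorm \hat\pi^K D_N(w) \rnorm \le \lnorm w \rnorm\, \lnorm \widehat C \rnorm_{\mathcal L}^{1/2} \lp \frac1N \sum_{i=1}^N \lnorm Y_i \rnorm^2 \rp^{1/2}.
\]
Applying this with $w = \hat v_j - v_j$, inserting the bound $\lnorm \hat v_j - v_j \rnorm \le 2\sqrt2\, \ag_j^{-1} \lnorm \widehat C - C \rnorm_{\mathcal L}$ (Lemma~2.3 of \citetext{HKbook}), and using $\la_j^{-1} \le \la_L^{-1}$ for $j \le L$ together with $|\lip v_j, x \rip| \le \lnorm x \rnorm$, I would obtain
\[
\lnorm c_N \rnorm_{\mathcal L} \le 2\sqrt2\, \la_L^{-1} \lnorm \widehat C \rnorm_{\mathcal L}^{1/2} \lp \sum_{j=1}^L \ag_j^{-1} \rp \lnorm \widehat C - C \rnorm_{\mathcal L} \lp \frac1N \sum_{i=1}^N \lnorm Y_i \rnorm^2 \rp^{1/2}.
\]

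To conclude, by Corollary~\ref{c:as} we have $\lnorm \widehat C - C \rnorm_{\mathcal L} \le \lnorm \widehat C - C \rnorm_\cS \to 0$ a.s., hence $\lnorm \widehat C \rnorm_{\mathcal L} \le \la_1 + \lnorm \widehat C - C \rnorm_{\mathcal L} \le \tfrac32 \la_1$ for all $N$ larger than some random $N_3$; moreover $N^{1-1/\ga} \lnorm \widehat C - C \rnorm_\cS \to 0$ a.s.\ for every $\ga \in (1, \ag/2)$, and by the strong law of large numbers $\frac1N \sum_{i=1}^N \lnorm Y_i \rnorm^2 \to E \lnorm Y \rnorm^2 < \infty$ a.s. Feeding these three facts into the last display, $\lnorm c_N \rnorm_{\mathcal L} \to 0$ a.s.\ follows from condition \refeq{c-b}, exactly as in the proof of Lemma~\ref{l:bN}. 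Since the whole argument is a direct adaptation of that proof, I do not expect a real obstacle; the only point requiring a small extra observation is that the estimate of Lemma~\ref{l:piK-DN} carries over to an arbitrary argument $w$ at the modest cost of replacing $\hat\la_j^{1/2}$ by $\lnorm \widehat C \rnorm_{\mathcal L}^{1/2}$, which is still $O(1)$.
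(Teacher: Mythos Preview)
Your proof is correct and follows essentially the same route as the paper: bound $\lnorm c_N(x)\rnorm$ by the operator norm of $\hat\pi^K D_N$ times $\sum_{j\le L}\la_j^{-1}|\lip v_j,x\rip|\,\lnorm\hat v_j-v_j\rnorm$, insert $\lnorm\hat v_j-v_j\rnorm\le 2\sqrt2\,\ag_j^{-1}\lnorm\widehat C-C\rnorm_\cL$, and finish via condition \refeq{c-b} and Corollary~\ref{c:as}. The only difference is how the $O(1)$ bound on $\lnorm\hat\pi^K D_N\rnorm_\cL$ is obtained: the paper simply uses $\lnorm\hat\pi^K D_N\rnorm_\cL\le\lnorm D_N\rnorm_\cL$ and the strong law of large numbers $D_N\to D$ in $\cS$, while you re-derive the bound $\lnorm\hat\pi^K D_N\rnorm_\cL\le\lnorm\widehat C\rnorm_\cL^{1/2}\bigl(\tfrac1N\sum_i\lnorm Y_i\rnorm^2\bigr)^{1/2}$ by extending Lemma~\ref{l:piK-DN} to arbitrary arguments. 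The paper's version is a touch more direct, but both are perfectly valid.
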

\begin{proof}
Observe that
\[
\begin{split}
\lnorm c_N(x)  \rnorm =& \lnorm \hat \pi^K D_N \lp
\sum_{j=1}^L \la_j^{-1} \lip v_j, x \rip  \lp \hat v_j-v_j \rp \rp \rnorm\\
\le &  \lnorm \hat \pi^K D_N \rnorm_{\mathcal{L}} \sum_{j=1}^L \la_j^{-1} | \lip v_j, x \rip |  \lnorm \hat v_j-v_j \rnorm\\
\le &\lnorm \hat \pi^K D_N \rnorm_{\mathcal{L}}  \lp \sum_{j=1}^L \la_j^{-1} \alpha_j^{-1} | \lip v_j, x \rip | \rp\lnorm \widehat C-C \rnorm_{\mathcal{L}}\\
\le & \lnorm \hat \pi^K D_N \rnorm_{\mathcal{L}}  \la_L^{-1}  \lnorm x \rnorm \lp \sum_{j=1}^L \alpha_j^{-1}\rp\lnorm \widehat C-C \rnorm_{\mathcal{L}}.
\end{split}
\]
Therefore,
\[
\lnorm c_N \rnorm_{\mathcal{L}} \le
\lnorm D_N \rnorm_{\mathcal{L}}  \la_L^{-1}
\lp \sum_{j=1}^L \alpha_j^{-1}\rp
\lnorm \widehat C-C \rnorm_{\mathcal{L}}.
\]
Since, by the law of large numbers,
$
\lnorm \hat \pi^K D_N \rnorm_\cL
 \convas \lnorm D \rnorm_\cL$,
the claim  follows from condition \refeq{c-b}.
\end{proof}

\rightline{\QED}

To deal with the last term, we need additional lemmas.

\begin{lemma} \label{l:D}
Under the assumptions of Theorem~\ref{t:a},
$N^{1-1/\ga} \| D_N - D \|_\cS \convas 0$.
\end{lemma}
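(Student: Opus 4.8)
The plan is to peel off from $D_N-D$ its ``signal'' part, which is a continuous linear image of $\widehat C-C$ and is controlled by Corollary~\ref{c:as}, leaving a ``noise'' part that is a centered sum of iid Hilbert--Schmidt operators whose norm has finite moments of every order strictly below $2$, to which a Marcinkiewicz--Zygmund strong law applies.

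First I would record the algebraic split. Since $Y_i=\Psi(X_i)+\eg_i$ and $\bigl(X_i\otimes\Psi(X_i)\bigr)(x)=\lip X_i,x\rip\Psi(X_i)=\Psi\bigl((X_i\otimes X_i)(x)\bigr)$, we have $X_i\otimes Y_i=\Psi\circ(X_i\otimes X_i)+X_i\otimes\eg_i$, and therefore
\[
D_N=\Psi\circ\widehat C+\frac1N\sum_{i=1}^N X_i\otimes\eg_i,\qquad D=E\lb X\otimes Y\rb=\Psi\circ C,
\]
where the second identity uses that $\eg$ is mean zero and independent of $X$, so that $E\lb X\otimes\eg\rb=0$ and $E\lb X\otimes\Psi(X)\rb=\Psi\circ E\lb X\otimes X\rb=\Psi\circ C$ (all Bochner integrals exist because $E\lnorm X\rnorm^2<\infty$ and $E\lnorm\eg\rnorm^2<\infty$). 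Hence
\[
D_N-D=\Psi\circ(\widehat C-C)+\frac1N\sum_{i=1}^N X_i\otimes\eg_i=:R_N+T_N.
\]
For $R_N$, the elementary inequality $\lnorm\Psi\circ A\rnorm_\cS\le\lnorm\Psi\rnorm_\cL\,\lnorm A\rnorm_\cS$, together with $\lnorm\Psi\rnorm_\cL\le\lnorm\Psi\rnorm_\cS<\infty$ and Corollary~\ref{c:as}, gives for the fixed $\ga\in(1,\ag/2)$ of Assumption~\ref{a:rates} that
\[
N^{1-1/\ga}\lnorm R_N\rnorm_\cS\le\lnorm\Psi\rnorm_\cS\,N^{1-1/\ga}\lnorm\widehat C-C\rnorm_\cS\convas 0.
\]

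Next I would treat $T_N$. Put $W_i=X_i\otimes\eg_i$; these are iid, mean zero in $\cS$, with $\lnorm W_i\rnorm_\cS=\lnorm X_i\rnorm\,\lnorm\eg_i\rnorm$. By the independence of $X$ and $\eg$,
\[
E\lnorm W_1\rnorm_\cS^{\ga}=E\lnorm X\rnorm^{\ga}\,E\lnorm\eg\rnorm^{\ga}<\infty,
\]
since $E\lnorm X\rnorm^{\ga}<\infty$ (because $\ga<2<\ag$ and $P(\lnorm X\rnorm>u)=u^{-\ag}L(u)$) and $E\lnorm\eg\rnorm^{\ga}\le(E\lnorm\eg\rnorm^{2})^{\ga/2}<\infty$. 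Because $\cS$ is a Hilbert space it is of Rademacher type $2$, hence of type $\ga$ for every $\ga\in[1,2]$, so the Marcinkiewicz--Zygmund strong law of large numbers (see, e.g., \citetext{acosta:1981}) gives $N^{-1/\ga}\sum_{i=1}^N W_i\convas 0$ in $\cS$, i.e.\ $N^{1-1/\ga}\lnorm T_N\rnorm_\cS\convas 0$. Adding the two bounds yields $N^{1-1/\ga}\lnorm D_N-D\rnorm_\cS\convas 0$.

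The one genuine obstacle is the term $T_N$: one needs an almost--sure Marcinkiewicz--Zygmund law with exponent $1/\ga$, $\ga<2$, in the infinite--dimensional space $\cS$, and this is exactly where the Hilbert (type $2$) structure is used. If one prefers not to quote that law, $T_N$ can be handled directly by truncating $W_i$ at level $N^{1/\ga}$: the discarded part is asymptotically irrelevant because $N\,P(\lnorm W_1\rnorm>N^{1/\ga})\to 0$, the recentering is negligible because $N^{1/\ga-1}E\lb\lnorm W_1\rnorm^{\ga}I_{\lbr \lnorm W_1\rnorm > N^{1/\ga} \rbr}\rb\to 0$, and the truncated centered sum tends to $0$ in probability by Chebyshev's inequality in the Hilbert space $\cS$ (using $E\lnorm W_1\rnorm^{\ga}<\infty$ with $\ga<2$); Theorem~\ref{t:A3.1} then upgrades this to almost--sure convergence. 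In either route, what makes $T_N$ tractable is that the regression error, having a finite second moment and being independent of $X$, contributes summands whose norm has finite moments of every order strictly below $2$, so no regular--variation machinery is needed here.
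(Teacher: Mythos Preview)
Your proof is correct and follows essentially the same route as the paper: both split $D_N-D$ into $\Psi\circ(\widehat C-C)$, handled via Corollary~\ref{c:as}, and the noise sum $N^{-1}\sum X_i\otimes\eg_i$, handled by the Marcinkiewicz--Zygmund strong law in the type--$\ga$ Hilbert space $\cS$ (Theorem~4.1 of \citetext{acosta:1981}), after checking $E\lnorm X_i\otimes\eg_i\rnorm_\cS^\ga=E\lnorm X\rnorm^\ga E\lnorm\eg\rnorm^\ga<\infty$ via independence. The only cosmetic difference is that the paper bounds the first term by $\lnorm\Psi\rnorm_\cS\lnorm\widehat C-C\rnorm_\cS$ rather than $\lnorm\Psi\rnorm_\cL\lnorm\widehat C-C\rnorm_\cS$, which is equivalent here.
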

\begin{proof}
The decomposition
\[
\frac{1}{N} \sum_{i=1}^N X_i \otimes Y_i=\frac{1}{N} \sum_{i=1}^N X_i \otimes \Psi(X_i)+\frac{1}{N} \sum_{i=1}^N X_i \otimes \eg_i
\]
and the identities
\[
X_i \otimes \Psi(X_i)=\Psi ( X_i \otimes X_i), \ \ E [X \otimes \Psi(X)]
=\Psi E [X \otimes X], \ \ E[X \otimes \eg]=0
\]
imply that
\begin{align*}
 \lnorm D_N-D \rnorm_\cS
&= \lnorm \frac{1}{N}
\sum_{i=1}^N X_i \otimes Y_i-E[X \otimes Y] \rnorm_\cS\\
 & \le
\lnorm \Psi \rnorm_\cS \lnorm \widehat C-C \rnorm_\cS
+\lnorm \frac{1}{N} \sum_{i=1}^N X_i \otimes \eg_i \rnorm_\cS.
 \end{align*}
For any $1 \le \ga < 2$,
\[
\lnorm \frac{1}{N^{1/\ga}} \sum_{i=1}^N  X_i \otimes \eg_i \rnorm_\cS
\convas 0.
\]
The above convergence follows from Theorem 4.1 of
\citetext{acosta:1981} which implies that in any separable Banach
space of Rademacher type $\ga$, $1 \le \ga < 2$, $N^{-1/\ga}
\sum_{i=1}^N Y_i \convas 0$, provided the $Y_i$ are iid with $E \| Y_i
\|^\ga < \infty$ and $ E Y_i = 0$. In our case, the Banach space is
the Hilbert space $\cS$ (a Hilbert space has Rademacher type $\ga$ for
any $\ga \le 2$, see e.g.  Theorems 3.5.2 and 3.5.7 of
\citetext{linde:1986}). Clearly, $E [X_i \otimes \eg_i] = 0$ and $ E
\| X_i \otimes \eg_i \|_\cS^\ga= E \| X_i \|^\ga E\| \eg_i\|^\ga <
\infty $. Another  application of Corollary \ref{c:as} completes the proof.
\end{proof}

\rightline{\QED}

\begin{lemma}\label{l:piK}
Under the assumptions of Theorem~\ref{t:a},
$
\la_L^{-1}
\lnorm \hat \pi^K D_N-\pi^KD \rnorm_{\mathcal{L}}  \convas 0
$.
\end{lemma}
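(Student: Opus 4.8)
The plan is to split
\[
\hat\pi^K D_N - \pi^K D = \lp \hat\pi^K - \pi^K \rp D_N + \pi^K\lp D_N - D \rp
\]
and to show that $\la_L^{-1}$ times each of the two summands converges to $0$ almost surely. For the second summand, since $\pi^K$ is an orthogonal projection one has $\lnorm \pi^K(D_N - D)\rnorm_{\cL} \le \lnorm D_N - D\rnorm_{\cL} \le \lnorm D_N - D\rnorm_{\cS}$, and Lemma~\ref{l:D} gives $N^{1-1/\ga}\lnorm D_N - D\rnorm_{\cS}\convas 0$. Writing
\[
\la_L^{-1}\lnorm \pi^K(D_N - D)\rnorm_{\cL} \le \lp \la_L^{-1}N^{1/\ga-1}\rp\lp N^{1-1/\ga}\lnorm D_N - D\rnorm_{\cS}\rp ,
\]
and noting that $\limsup_{N} \la_L^{-1}N^{1/\ga-1} < \infty$, which is implied by \refeq{c-?} since $K^{1/2}\ge 1$, this quantity is $o(1)$ a.s.

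For the first summand I would write $\hat\pi^K - \pi^K = \sum_{k=1}^K\lp \hat u_k\otimes\hat u_k - u_k\otimes u_k\rp$, use bilinearity of $\otimes$ together with $\lnorm \hat u_k\rnorm = \lnorm u_k\rnorm = 1$ to get $\lnorm \hat u_k\otimes\hat u_k - u_k\otimes u_k\rnorm_{\cS}\le 2\lnorm \hat u_k - u_k\rnorm$, and then invoke the eigenfunction perturbation bound (Lemma~2.3 of \citetext{HKbook}) for the covariance operator $C_Y$ of $Y$ and its estimator $\widehat C_Y$, namely $\lnorm \hat u_k - u_k\rnorm \le 2\sqrt{2}\,\bg_k^{-1}\lnorm \widehat C_Y - C_Y\rnorm_{\cL}$ with $\bg_k$ as in \refeq{bg-j} (here Assumption~\ref{a:diff} guarantees the eigenvalues $\ga_j$ are simple). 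This yields
\[
\lnorm \hat\pi^K - \pi^K\rnorm_{\cL}
\le 4\sqrt{2}\,\lp \sum_{k=1}^K\bg_k^{-1}\rp\lnorm \widehat C_Y - C_Y\rnorm_{\cL},
\]
and hence, using $\lnorm (\hat\pi^K - \pi^K)D_N\rnorm_{\cL}\le \lnorm \hat\pi^K - \pi^K\rnorm_{\cL}\lnorm D_N\rnorm_{\cL}$,
\[
\la_L^{-1}\lnorm (\hat\pi^K - \pi^K)D_N\rnorm_{\cL}
\le 4\sqrt{2}\,\lp \la_L^{-1}\lp \sum_{k=1}^K\bg_k^{-1}\rp N^{1/\ga-1}\rp\lp N^{1-1/\ga}\lnorm \widehat C_Y - C_Y\rnorm_{\cS}\rp\lnorm D_N\rnorm_{\cL}.
\]
Because the $Y_i$ satisfy Assumptions~\ref{a:X-RV} and~\ref{a:KM}, Corollary~\ref{c:as} gives $N^{1-1/\ga}\lnorm \widehat C_Y - C_Y\rnorm_{\cS}\convas 0$; Lemma~\ref{l:D} shows $D_N\to D$ in $\cS$ a.s., so $\sup_N\lnorm D_N\rnorm_{\cL}<\infty$ a.s.; and the bracketed factor $\la_L^{-1}(\sum_{k=1}^K\bg_k^{-1})N^{1/\ga-1}$ is bounded by \refeq{c-D}. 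Thus $\la_L^{-1}\lnorm (\hat\pi^K - \pi^K)D_N\rnorm_{\cL}\to 0$ a.s., and combining with the bound for the other summand proves the lemma.

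I do not expect a genuine analytic obstacle here: once Lemma~\ref{l:D} and Corollary~\ref{c:as} are in hand, the argument is essentially bookkeeping. The only points requiring care are to match each divergent factor — $\la_L^{-1}$, the partial sum $\sum_{k=1}^K\bg_k^{-1}$, and the power $N^{1/\ga-1}$ — against the correct clause of Assumption~\ref{a:rates}, and to apply the eigenfunction perturbation inequality to the eigensystem of $C_Y$ (eigenvalues $\ga_j$, gaps $\bg_j$) rather than to that of $C$.
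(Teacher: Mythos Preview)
Your argument is correct and uses the same ingredients as the paper's proof: Lemma~\ref{l:D} for the $D_N-D$ piece, Corollary~\ref{c:as} applied to the $Y_i$ together with the eigenfunction perturbation bound (Lemma~2.3 of \citetext{HKbook}) for the $\hat\pi^K-\pi^K$ piece, and the appropriate clauses of Assumption~\ref{a:rates} to absorb the growing factors.

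The only difference is the split. The paper writes $\hat\pi^K D_N-\pi^K D=\hat\pi^K(D_N-D)+(\hat\pi^K-\pi^K)D$, whereas you write $(\hat\pi^K-\pi^K)D_N+\pi^K(D_N-D)$. Your version is slightly more economical: since $\pi^K$ has operator norm $1$, you get $\lnorm\pi^K(D_N-D)\rnorm_\cL\le\lnorm D_N-D\rnorm_\cL$ directly, without the factor $K^{1/2}$ that the paper picks up (and which is the reason condition \refeq{c-?} carries $K^{1/2}$). For the other piece, the paper uses the explicit identity $D=\Psi C$ and splits $(\hat\pi^K-\pi^K)D(x)$ into two sums, one controlled by $\lp\sum_{k=1}^K\bg_k^{-2}\rp^{1/2}$ and the other by $\sum_{k=1}^K\bg_k^{-1}$; you instead bound $\lnorm\hat\pi^K-\pi^K\rnorm_\cL$ in one stroke by $4\sqrt{2}\,\lp\sum_{k=1}^K\bg_k^{-1}\rp\lnorm\widehat C_Y-C_Y\rnorm_\cL$ and multiply by $\lnorm D_N\rnorm_\cL$, which stays a.s.\ bounded by Lemma~\ref{l:D}. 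Both routes are covered by \refeq{c-D}; yours simply does not need the $\lp\sum\bg_k^{-2}\rp^{1/2}$ part.
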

\begin{proof}
By the triangle inequality,
\[
\lnorm \hat \pi^K D_N-\pi^K D \rnorm_\cL
\le \lnorm \hat \pi^K D_N-\hat \pi^K D \rnorm_\cL
+\lnorm \hat \pi^K D-\pi^K D \rnorm_\cL.
\]
For the first term, we have
\begin{align*}
\lnorm \hat \pi^K D_N-\hat \pi^K D \rnorm_\cL
&=  \sup_{\lnorm x\rnorm \le 1} \lnorm \sum_{k=1}^K \lip \lp D_N-D \rp (x) , \hat u_k \rip \hat u_k \rnorm \\
& \le \sup_{\lnorm x\rnorm \le 1}
\lp \sum_{k=1}^K \Big| \lip \lp D_N-D \rp (x), \hat u_k \rip \Big|  \rp \\
& \le K^{1/2} \lnorm D_N-D \rnorm_\cL.
\end{align*}
Thus,
$\la_L^{-1} \lnorm \hat \pi^K D_N-\hat \pi^K D \rnorm_\cL
\convas 0$ by Lemma~\ref{l:D}  and condition \refeq{c-?}.

Turning to the second term, observe first that
\[
D(x) =  E [ \lip X, x \rip Y ]
=\Psi( E [\lip X, x \rip X]) =  \Psi (C(x)).
\]
Setting $y = \Psi (C(x))$ we thus have
\[
\pi^K D(x) = \sum_{k=1}^K \lip y , u_k \rip u_k, \ \ \
\hat \pi^K D(x) = \sum_{k=1}^K \lip y , \hat u_k \rip \hat u_k.
\]
Consequently, $\hat \pi^K D(x) - \pi^K D(x) = D_1(x) + D_2(x)$, where
\begin{align*}
D_1(x) = \sum_{k=1}^K \lip y, u_k - \hat u_k \rip u_k, \ \ \
D_2(x) = \sum_{k=1}^K \lip y, \hat u_k \rip (u_k - \hat u_k).
\end{align*}
Next,
\[
\| D_1(x) \|
\le \| y \| \lbr \sum_{k=1}^K \| u_k - \hat u_k\|^2 \rbr^{1/2}
\le  2 \sqrt{2} \| y \| \lnorm\widehat C_Y - C_Y  \rnorm_\cL
 \lbr \sum_{k=1}^K \frac{1}{\bg_k^2} \rbr^{1/2}
\]
and
\[
\| D_2(x) \|
\le  \sum_{k=1}^K |\lip y , \hat u_k \rip |  \| u_k - \hat u_k\|
\le  2 \sqrt{2} \| y \| \lnorm\widehat C_Y - C_Y  \rnorm_\cL
\sum_{k=1}^K \frac{1}{\bg_k}.
\]
We see that condition \refeq{c-D} implies  that
$\la_L^{-1} \lnorm \hat \pi^K D-\hat \pi^K D \rnorm_\cL
\convas 0$.
\end{proof}

\rightline{\QED}

\begin{lemma}\label{l:dN}  Under the assumptions of Theorem~\ref{t:a},
  $ \lnorm d_N \rnorm_\cL \convas 0$.
\end{lemma}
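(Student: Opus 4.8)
The plan is to obtain Lemma~\ref{l:dN} as an essentially immediate consequence of Lemma~\ref{l:piK}, after one elementary operator-norm estimate. First I would fix $x\in L^2$ and abbreviate $w_L(x) = \sum_{j=1}^L \la_j^{-1}\lip v_j, x\rip v_j$, so that, by the very definition of $d_N$, one has $d_N(x) = \lp \hat\pi^K D_N - \pi^K D\rp\lp w_L(x)\rp$. Since the $v_j$ are orthonormal and $\la_j \ge \la_L$ for $1\le j\le L$, Parseval's identity gives
\[
\lnorm w_L(x)\rnorm^2 = \sum_{j=1}^L \la_j^{-2}\lip v_j, x\rip^2
\le \la_L^{-2}\sum_{j=1}^L \lip v_j, x\rip^2 \le \la_L^{-2}\lnorm x\rnorm^2 .
\]
Hence $\lnorm d_N(x)\rnorm \le \la_L^{-1}\lnorm \hat\pi^K D_N - \pi^K D\rnorm_\cL\,\lnorm x\rnorm$, and taking the supremum over $\lnorm x\rnorm\le 1$ yields
\[
\lnorm d_N\rnorm_\cL \le \la_L^{-1}\lnorm \hat\pi^K D_N - \pi^K D\rnorm_\cL .
\]

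Next I would simply invoke Lemma~\ref{l:piK}, which asserts precisely that $\la_L^{-1}\lnorm \hat\pi^K D_N - \pi^K D\rnorm_\cL \convas 0$ under the assumptions of Theorem~\ref{t:a}. Combining this with the displayed bound gives $\lnorm d_N\rnorm_\cL \convas 0$, which is the assertion of the lemma.

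There is no genuine obstacle internal to this statement: once the bound $\lnorm w_L(x)\rnorm\le \la_L^{-1}\lnorm x\rnorm$ is in hand, it is a one-line corollary of Lemma~\ref{l:piK}. The real work has already been carried out upstream — in Lemma~\ref{l:piK}, where $\hat\pi^K D_N - \pi^K D$ is split into the piece $\hat\pi^K(D_N-D)$, controlled via Lemma~\ref{l:D} and condition \refeq{c-?}, and the piece $(\hat\pi^K-\pi^K)D$, controlled through the eigengaps $\bg_k$ of $C_Y$ and condition \refeq{c-D} — and, ultimately, in Corollary~\ref{c:as}, which supplies the almost-sure rate for $\lnorm \widehat C - C\rnorm_\cS$ driving all of the rate conditions in Assumption~\ref{a:rates}.
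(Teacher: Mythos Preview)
Your proof is correct and follows essentially the same route as the paper's: bound $\lnorm w_L(x)\rnorm \le \la_L^{-1}\lnorm x\rnorm$ via Parseval and the monotonicity of the $\la_j$, deduce $\lnorm d_N\rnorm_\cL \le \la_L^{-1}\lnorm \hat\pi^K D_N - \pi^K D\rnorm_\cL$, and invoke Lemma~\ref{l:piK}. The only cosmetic difference is that the paper also cites condition~\refeq{c-a} at the end, which is redundant once Lemma~\ref{l:piK} is stated in the form $\la_L^{-1}\lnorm \hat\pi^K D_N - \pi^K D\rnorm_\cL \convas 0$.
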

\begin{proof}
Observe that
\begin{align*}
\lnorm d_N(x)  \rnorm^2
&= \lnorm \lp \hat \pi^K D_N -\pi^K D \rp
\lp \sum_{j=1}^L \la_j^{-1} \lip v_j, x \rip  v_j \rp \rnorm^2\\
 & \le  \lnorm \hat \pi^K D_N-\pi^KD \rnorm_\cL^2
\lp \sum_{j=1}^L \la_j^{-2} \lip v_j, x \rip^2 \rp  \\
& \le  \lnorm \hat \pi^K D_N-\pi^KD \rnorm_\cL^2
\la_L^{-2}\lp \sum_{j=1}^L  \lip v_j, x \rip^2 \rp  \\
&  \le \lnorm \hat \pi^K D_N-\pi^KD \rnorm_\cL^2
\la_L^{-2}  \lnorm x \rnorm^2.
\end{align*}
Consequently,
$
\lnorm d_N \rnorm_\cL \le\lnorm \hat \pi^K D_N-\pi^KD \rnorm_\cL
 \la_L^{-1} ,
$
so the claim follows from Lemma~\ref{l:piK} and condition \refeq{c-a}.
\end{proof}

\rightline{\QED}

\end{document}